\documentclass[a4paper,11pt]{article}
\pagestyle{plain}
\usepackage{amsmath}
\usepackage{amssymb}
\usepackage{graphicx}
\usepackage{subfigure}
\usepackage[top=2.2cm,bottom=2.2cm,left=3.18cm,right=3.18cm]{geometry}
\usepackage{bm}
\usepackage{multicol}
\usepackage{float}
\usepackage{color}
\usepackage{mathrsfs}
\usepackage{array}
\usepackage[noend]{algpseudocode}
\usepackage{algorithmicx,algorithm}
\usepackage{caption2}
\usepackage{txfonts}
\usepackage[colorlinks,citecolor=red,urlcolor=blue,bookmarks=false,hypertexnames=true]{hyperref}

\newenvironment{proof}{{\noindent\it Proof}\quad}{\hfill $\square$\par}  
\newtheorem{theorem}{Theorem}
\newtheorem{lemma}{Lemma}
\newtheorem{remark}{Remark}
\newtheorem{example}{Example}
\numberwithin{equation}{section}
\numberwithin{remark}{section}
\numberwithin{theorem}{section}
\numberwithin{lemma}{section}
\numberwithin{figure}{section}
\numberwithin{table}{section}
\numberwithin{example}{section}
\newcommand{\bs}[1]{\boldsymbol{#1}}

 \def\Span{\operatorname{span}}
\def\NN{\mathbb{N}}
\def\P{\mathcal{P}}
\def\ZZ{\mathbb{Z}}
\def\RR{\mathbb{R}}
\def\tr{\mathsf{T}}
\def\TT{\mathcal{T}}

\def\JJ{\mathcal{J}}
\def\hx{\hat{x}}

\def\F{\mathbf{F}}

\def\PP{\mathbf{P}}

\def\bss{\mathbf{s}}
\def\bk{\mathbf{k}}
\def\ii{\rm{i}}
\def\bell{\bm{\ell}}
\def\balpha{\bm{\alpha}}

\begin{document}
\title{Sparse Spectral-Galerkin Method on An Arbitrary Tetrahedron Using Generalized Koornwinder Polynomials}

\author{Lueling Jia\thanks{Beijing Computational Science Research Center, Beijing, 100193, China.
	  email: {\tt lljia@csrc.ac.cn}. }
       \and
         Huiyuan Li\thanks{State Key Laboratory of Computer Science/Laboratory of Parallel Computing, Institute of Software, Chinese Academy of Sciences, Beijing 100190, China. 
	  email: {\tt huiyuan@iscas.ac.cn}.}
	\and 
	Zhimin Zhang\thanks{Beijing Computational Science Research Center, Beijing 100193, China; and Department of Mathematics, Wayne State University, Detroit, MI 48202, USA.
	  email: {\tt zmzhang@csrc.ac.cn; ag7761@wayne.edu}.}
	}
\date{}

\maketitle

\begin{abstract}
In this paper, we  propose a sparse spectral-Galerkin approximation scheme for solving the second-order partial differential equations on an arbitrary tetrahedron. Generalized Koornwinder polynomials are introduced on the reference tetrahedron as basis functions with  their various  recurrence relations and differentiation properties being explored.
The method leads to  well-conditioned and sparse linear systems 
whose entries can either be calculated directly by the orthogonality of the generalized Koornwinder polynomials for differential equations with constant coefficients
or be evaluated efficiently via our recurrence algorithm for problems with variable coefficients.  
Clenshaw algorithms for the evaluation of any polynomial in an expansion of the generalized Koornwinder basis are also designed to boost the efficiency of the  method.
Finally, numerical experiments are carried out  to illustrate the effectiveness of  the proposed Koornwinder spectral method.

\end{abstract}

\noindent\textbf{Keywords:}  generalized Koornwinder polynomials,\, tetrahedron,\, spectral-Galerkin method,\, sparse,\, 
well-conditioned

\noindent\textbf{2020 Mathematics Subject Classification:} 65N25,\, 65N35

\section{Introduction}
Spectral element methods with unstructured mesh have been widely used in the study of computational fluid dynamics, elastodynamics, resistivity modeling and many other fields due to their ``spectral accuracy"  \cite{Sherwin1996, Karniadakis2005, resistivity2019, 2017elastody}.
 Their virtue of high accuracy also makes spectral (element) methods powerful tools for solving eigenvalue problems as they are able to provide more reliable eigen-solutions than the low order methods such as  finite element methods and finite difference methods \cite{zhangeig2015, Baogaps2020}.
%
Meanwhile, as  simplices are one kind of the basic geometric elements, their use gives flexibility in the discretization of complex domains.
In view of this, spectral methods on simplex elements, especially on tetrahedra in three dimensions, with sparse structures in discrete matrices,  play a fundamental role in designing accurate and efficient numerical schemes in practical applications.

Based on the Galerkin framework, the accuracy and computational effectivity of the numerical scheme depend on the choice of basis functions.
Hierarchical basis functions defined in the barycentric  coordinate system on the tetrahedron  have been proposed and developed in \cite{peano1979, szabo1991, Carnevali1993, Adjerid2001}, which possess good symmetry but lack useful orthogonality. Thus, it requires complicated numerical integration for obtaining linear systems in high order case.
 In the Cartesian coordinate system, although a fully tensorial spectral method using rational basis functions put forward in \cite{LiWang_rational}  has spectral accuracy in approximations and  can be implemented effectively,
 the use of orthogonal basis polynomials is more natural.
Koornwinder polynomials form a family of fully orthogonal polynomials  with respect to a particular weight function on the simplex  \cite{KOORNWINDER1975435} and its simplest family, the $L^2$-orthogonal Koornwinder-Dubiner polynomials have been studied in \cite{Dubiner1991}.
Motivated by generalized Jacobi polynomials \cite{GSW2001, GSW2006, STW2011},
 some progress on numerical schemes and theoretical analysis have been made for generalized Koornwinder polynomials on triangles \cite{LiShen2009, ShanLi2017, Townsend2019}. Indeed, generalized Koornwinder polynomials simplify the design of shape functions in triangular spectral element approximations  with efficient numerical algorithms and well-conditioned sparse  linear systems.
 However, few results are achieved for the extension of generalized Koornwinder polynomials to tetrahedra, although polynomial  basis  functions for  tetrahedral elements have been proposed by  Sherwin and Karniadakis
based on classical Koornwinder polynomials in 
1990's \cite{KS1995, Karniadakis2005}, and  by  Beuchler et al.  based on integrated Jacobi polynomials 
 in  2000's \cite{intlegendre2006, intlegendre2007, intlegendre2008}.

In this paper, we first introduce the  generalized Koornwinder polynomials on a reference tetrahedron and explore their various recurrence relations and differentiation properties.
We then
propose a sparse spectral-Galerkin method for second-order partial differential equations on an arbitrary tetrahedron
by employing generalized Koornwinder polynomials
 to design modal basis functions in simple presentations.
The sparsity that exists in various recurrence relations of generalized Koornwinder polynomials allows us to assemble the discrete matrices efficiently.
 Indeed, 
a generalized Koornwinder polynomial of certain order or its derivatives are equal to a finite  combination of  Koornwinder-Dubiner polynomials.
 For differential equations with constant coefficients, the integrals of two generalized Koornwinder polynomials or their derivatives, which are entries of the stiffness matrix and the mass matrix, can be exactly evaluated by the expansion coefficients  and the $L^2$-orthogonality of Koornwinder-Dubiner polynomials.
For the case of variable coefficients,
 the three-term recurrence relation for generalized Koornwinder polynomials
yields  a recursive assembling of the mass matrix that only requires $\mathcal{O}(M^6)$ operations (for polynomials of total degree $\le M$), instead of the complexity of $\mathcal{O}(M^9)$ by directly using numerical quadrature.
The three-term recurrence relation also admits
an efficient implementation of the Clenshaw algorithm \cite{clenshaw1955} to evaluate the generalized Koornwinder expansions in $\mathcal{O}(M^3)$ operations.
More importantly,  a numerical study reveals that   the sparse linear system  resulted from our spectral-Galerkin method
has a condition number asymptotically in $\mathcal{O}(M^4)$, which is superior to  $\mathcal{O} (M^7)$
for those using classical Koornwinder polynomials  and  $\mathcal{O}(M^{10})$ for those using integrated Jacobi polynomials.
 Hence, our linear system is well-conditioned and can be efficiently solved.


The paper is organized as follows.
In Section \ref{preliminaries}, we formulate definitions and basic properties of generalized Jacobi polynomials and  generalized  Koornwinder polynomials, including their
various recurrence relations and differentiation properties.
In Section \ref{clenshew}, an efficient implementation of the Clenshaw algorithm  for Koornwinder expansions  based on the three-term recurrence relation of generalized  Koornwinder polynomials has been studied. 
The sparse spectral-Galerkin method for second-order partial differential equations on an arbitrary  tetrahedron using generalized Koornwinder polynomials together with its implementation is presented in Section  \ref{spectral-Galerkin}.
We report some illustrative numerical results to confirm the sparsity  as well as exponential orders of convergence of the method in Section \ref{numerical_exper}.
Finally, a conclusion remark is given in Section \ref{conclusion}.

\section{Preliminaries}\label{preliminaries}
Let $\Omega\subset\RR^3$ be a bounded domain and $w$ be a weight function.
Denote by $(\cdot,\cdot)_{w,\Omega}$ and $\|\cdot\|_{w,\Omega}$ the inner product and the norm of $L^2_{w}(\Omega)$, respectively. 
$H_w^s(\Omega)$ and $H^s_{0,w}(\Omega)$ are the usual Sobolev spaces with respect to the weight function $w$. 
 Denote  by $\ZZ$, $\NN$, $\NN_0$  and $\ZZ^-$ the set of integers, positive integers, non-negative integers and negative integers, respectively. 
Further let $I_m\in \RR^{m\times m}$ be the  identity matrix and
$\bm{e}_{n}$ be the unit column vector only with its $n$-th entry being 1.

For any $M\in\NN_0$, let $\P_M(\Omega)$ be the space of polynomials of total degree no greater than $M$ in $\Omega$ and denote
\begin{equation}\label{dm}
d_M:=\dim \P_M(\Omega)= \binom{M+3}{M}.
\end{equation}
$\Omega$  and $w$ (if $w\equiv 1$) could be dropped from the notation when no confusion would arise.

Let $\hat{\TT}$ be the reference tetrahedron defined as
\begin{equation*}\label{reference}
\hat{\TT} := \{ \bm{\hat{x}}=(\hx_1, \hx_2, \hx_3)^\tr: 0\le \hx_1, \hx_2, \hx_3, \hx_1+\hx_2+\hx_3\le1\},
\end{equation*}
with vertices 
$$\hat{P}_0=(0,0,0)^\tr,\quad\hat{P}_1=(1,0,0)^\tr,\quad\hat{P}_2=(0,1,0)^\tr,\quad\hat{P}_3=(0,0,1)^\tr.$$
Moreover, let $\left( \bm{a}\cdot \bm{b}\right)$ and  $\bm{a} \times \bm{b}$ denote the dot product and the cross product of any $\bm{a}, \bm{b}\in\RR^3,$ respectively.
Denote $(\bm{a}, \bm{b}, \bm{c}) = \bm{a} \cdot (\bm{b}\times \bm{c}) = \bm{b} \cdot (\bm{c}\times \bm{a})
= \bm{c} \cdot (\bm{a}\times \bm{b})$ as the triple product of 
 any $\bm{a}, \bm{b}, \bm{c}\in\RR^3$.
For any  $\bell=(\ell_1, \ell_2, \ell_3)\in \NN_0^3$ and 
$\balpha=(\alpha_0,\alpha_1,\alpha_2,\alpha_3)\in\RR^4$, we introduce the following multi-index notation
\begin{align*}
&|\bell| =\ell_1+\ell_2+\ell_3, && |\balpha| = \alpha_0 + \alpha_1 + \alpha_2 + \alpha_3,
\\
&\bell^i = (\ell_1,\cdots,\ell_i), && |\bell^i| = \ell_1+\cdots+\ell_i && 1\le i\le 2, \, i\in\NN,
\\
&\balpha^j = (\alpha_0,\cdots,\alpha_j),&& |\balpha^j| = \alpha_0+\cdots+\alpha_j, && 0\le j\le 2, \, j\in\NN_0. 
\end{align*}


 


\subsection{Generalized Jacobi polynomials}
Let $I=(-1,1)$.
For any $k\in\NN_0$, the classical Jacobi polynomial $J_k^{\alpha,\beta}(z)$  of degree $k$  with $\alpha,\beta>-1$
 has the following representation in hypergeometric series,  
 \begin{equation}
\begin{aligned}
\label{jacobi_hyper}
J_k^{\alpha,\beta}(z) 
=\sum\limits_{j=0}^k\frac{(\alpha+j+1)_{k-j}(k+\alpha+\beta+1)_j}{j!(k-j)!}\left(\frac{z-1}{2}\right)^j,
\end{aligned}
\end{equation}
where $(a)_n=a(a+1)\cdots(a+n-1)$ is the Pochhammer symbol. 
Classical Jacobi polynomials are mutually orthogonal with respect to the Jacobi weight function 
$\varpi^{\alpha,\beta}(z):=(1-z)^\alpha(1+z)^\beta$,
\begin{equation}\label{jacobi_ortho}
({J}_k^{\alpha,\beta},{J}_j^{\alpha,\beta})_{\varpi^{\alpha,\beta},I}= 2^{\alpha+\beta+1} h_{k}^{\alpha,\beta}\delta_{k,j},\quad
h_k^{\alpha,\beta}=\frac{\Gamma(k+\alpha+1)\Gamma(k+\beta+1)}{(2k+\alpha+\beta+1)\Gamma(k+1)\Gamma(k+\alpha+\beta+1)},\;
k,j \in\mathbb{N}_0,
\end{equation}
where $\delta_{k,j}$ is the Kronecker delta.
From the representation \eqref{jacobi_hyper}, the index parameters $\alpha$ and/or $\beta$ of Jacobi polynomials could be extended to any real numbers.
In the case of $\alpha$ and/or $\beta$ being negative integer parameters,
they are exactly generalized Jacobi polynomials attracting much attention in literature
for their applications in scientific computations \cite{GSW2001, GSW2006, STW2011}.
However,  a degree reduction occurs if and only if $-k-\alpha-\beta\in \{1,2,\cdots,k\}$.
In this paper, we are interested in the generalized Jacobi polynomials when $\alpha=-1$ and/or $\beta=-1.$
At first, we directly obtain from \eqref{jacobi_hyper} that
\begin{align}
J_0^{\alpha,-1}(z) = 1,\quad 
J_k^{\alpha,-1}(z) = \frac{k+\alpha}{k} \frac{z+1}{2} J_{k-1}^{\alpha,1}(z),\quad k\ge1, \alpha>-1,\\
J_0^{-1,\beta}(z) = 1,\quad 
J_k^{-1,\beta}(z) = \frac{k+\beta}{k} \frac{z-1}{2} J_{k-1}^{1,\beta}(z),\quad k\ge1, \beta>-1.
\end{align}
Meanwhile, we modify the definition of $J_1^{-1,-1}$ and then obtain the following
complete system:
\begin{equation}
J_0^{-1,-1}(z)=1,\quad J_1^{-1,-1}(z) = z,\quad
J_k^{-1,-1}(z) = \frac{z-1}{2} \frac{z+1}{2} J_{k-2}^{1,1}(z),\quad k\ge 2.
\end{equation}

Some important properties on generalized Jacobi polynomials are derived  from \cite[(3.110)-(3.111)]{STW2011} and \cite[(6.4.20)-(6.4.22)]{AAR1999} with piecewise coefficients. 
We summarize these conclusions in the following lemmas.

\begin{lemma}\label{lemma_three}
For any $k\in\NN_0$ and $\alpha,\beta\ge-1,$
the three-term recurrence relation for $J_k^{\alpha,\beta}(z)$ holds,
\begin{equation}\label{three_1d}
\begin{aligned}
&z { J_k^{\alpha,\beta}}(z) = 
a_{1,k}^{\alpha,\beta} { J_{k+1}^{\alpha,\beta}}(z) + a_{2,k}^{\alpha,\beta}  { J_{k}^{\alpha,\beta}}(z)+ a_{3,k}^{\alpha,\beta}  { J_{k-1}^{\alpha,\beta}}(z),
\end{aligned}
\end{equation}
where
\begin{equation*}
\left(a_{1,k}^{\alpha,\beta}, a_{2,k}^{\alpha,\beta}, a_{3,k}^{\alpha,\beta}\right)=\begin{cases}
\left(1,0,0 \right),  \qquad\qquad\qquad\qquad\qquad\qquad\qquad\qquad\quad   k=0, \alpha=\beta=-1,\\[0.3em]
\left(\frac{2}{\alpha+\beta+2}, \frac{\beta-\alpha}{\alpha+\beta+2}, 0 \right), 
\qquad\qquad\qquad\qquad\qquad\qquad\quad\,\,
k=0, \alpha+\beta\neq-2,\\[0.3em]
\left( 4,0,1\right),  
\qquad\qquad\qquad\qquad\qquad\qquad\qquad\qquad\quad
k=1, \alpha=\beta=-1,\\[0.3em]
\left( \frac{1}{2},0,0\right),
\qquad\qquad\qquad\qquad\qquad\qquad\qquad\qquad\quad
k=2, \alpha=\beta=-1,\\[0.3em]
\left( \frac{2(k+1)(k+\alpha+\beta+1)}{(2k+\alpha+\beta+1)(2k+\alpha+\beta+2)},
\frac{\beta^2-\alpha^2}{(2k+\alpha+\beta)(2k+\alpha+\beta+2)},
\frac{2(k+\alpha)(k+\beta)}{(2k+\alpha+\beta)(2k+\alpha+\beta+1)}
\right),  {\rm{otherwise}}.
\end{cases} 
\end{equation*}

\end{lemma}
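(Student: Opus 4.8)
The plan is to reduce the whole statement to the \emph{classical} three-term recurrence for Jacobi polynomials with $\alpha,\beta>-1$, which is standard (e.g.\ \cite[(6.4.20)--(6.4.22)]{AAR1999}, see also \cite[(3.110)--(3.111)]{STW2011}) and is exactly the ``otherwise'' line when $\alpha,\beta>-1$. For the boundary values $\alpha=-1$ and/or $\beta=-1$ the ``otherwise'' coefficients will instead \emph{emerge} from the two factorizations recalled just above the lemma. The generic formula fails only where it must: at $k=0$ the recurrence has no $J_{k-1}$ term (and at $\alpha=\beta=-1$ the generic $a_{1,0}$, $a_{3,0}$ have vanishing denominators), and at $\alpha=\beta=-1$ with $k\in\{1,2\}$ the \emph{modified} value $J_1^{-1,-1}(z)=z$ --- which replaces the identically-zero hypergeometric value --- breaks the generic pattern; these give the four exceptional lines. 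I would also use the reflection symmetry $J_k^{\alpha,\beta}(-z)=(-1)^kJ_k^{\beta,\alpha}(z)$, which one checks survives the modification of $J_1^{-1,-1}$ and which turns the $(\alpha,\beta)$ recurrence into the $(\beta,\alpha)$ recurrence with coefficients $(a_{1,k}^{\alpha,\beta},-a_{2,k}^{\alpha,\beta},a_{3,k}^{\alpha,\beta})$, so that the case $\alpha=-1<\beta$ follows from the case $\beta=-1<\alpha$.

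For $\beta=-1$, $\alpha>-1$, and $k\ge1$, I would start from $J_k^{\alpha,-1}(z)=\frac{k+\alpha}{k}\,\frac{z+1}{2}\,J_{k-1}^{\alpha,1}(z)$. Multiplying the classical recurrence for $J_{k-1}^{\alpha,1}(z)$ by $\frac{z+1}{2}$ and then using the same identity in reverse, $\frac{z+1}{2}J_j^{\alpha,1}(z)=\frac{j+1}{j+1+\alpha}J_{j+1}^{\alpha,-1}(z)$ for $j\ge0$ (the would-be $j=-1$ term never appears since $a_{3,0}^{\alpha,1}=0$), writes $zJ_k^{\alpha,-1}(z)$ as a combination of $J_{k+1}^{\alpha,-1},J_k^{\alpha,-1},J_{k-1}^{\alpha,-1}$, and a short Pochhammer/Gamma computation collapses the product of ratios to the ``otherwise'' coefficients specialized to $\beta=-1$. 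The value $k=0$ is checked directly: $J_0^{\alpha,-1}=1$ and $J_1^{\alpha,-1}=(\alpha+1)\frac{z+1}{2}$ give $zJ_0^{\alpha,-1}=z=\tfrac{2}{\alpha+1}J_1^{\alpha,-1}-J_0^{\alpha,-1}$, which is the ``$k=0,\ \alpha+\beta\neq-2$'' line at $\beta=-1$; that same line for general $\alpha,\beta>-1$ is the identical one-line computation from $J_1^{\alpha,\beta}(z)=(\alpha+1)+(\alpha+\beta+2)\frac{z-1}{2}$.

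For $\alpha=\beta=-1$ I would first use $J_k^{-1,-1}(z)=\frac{z-1}{2}\frac{z+1}{2}J_{k-2}^{1,1}(z)$ with $J_0^{1,1}(z)=1$, $J_1^{1,1}(z)=2z$ to list $J_0^{-1,-1}=1$, $J_1^{-1,-1}=z$, $J_2^{-1,-1}=\frac{z^2-1}{4}$, $J_3^{-1,-1}=\frac{z^3-z}{2}$, and verify the triples $(1,0,0)$, $(4,0,1)$, $(\tfrac12,0,0)$ for $k=0,1,2$ by direct substitution. For $k\ge3$, multiplying the classical recurrence for $J_{k-2}^{1,1}(z)$ by $\frac{z-1}{2}\frac{z+1}{2}$ and using $\frac{z-1}{2}\frac{z+1}{2}J_j^{1,1}(z)=J_{j+2}^{-1,-1}(z)$ for $j\ge0$ gives $a_{i,k}^{-1,-1}=a_{i,k-2}^{1,1}$; here $k\ge3$ guarantees the lowest index produced is $J_{k-1}^{-1,-1}$ with $k-1\ge2$, so the modified $J_1^{-1,-1}$ is never invoked and the identity is legitimate. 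Putting $\alpha=\beta=1$ and replacing $k$ by $k-2$ in the classical coefficients reproduces exactly the ``otherwise'' expressions at $\alpha=\beta=-1$.

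The argument is elementary; the only genuine work is bookkeeping. I expect the main obstacle to be, first, correctly isolating the three low-$k$ corner cases at $\alpha=\beta=-1$ --- whose necessity comes from the modified $J_1^{-1,-1}$ together with the vanishing denominators in the generic coefficients at $k=0,1$ --- and making sure every factorization identity is applied only for admissible indices; and second, the final simplification, verifying that the composed rational coefficients collapse to the stated closed forms: routine, but the step most prone to algebra slips.
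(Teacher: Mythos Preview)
Your proposal is correct and follows exactly the route the paper implicitly takes. The paper itself does not prove this lemma at all: it simply states that the relation is ``derived from \cite[(3.110)--(3.111)]{STW2011} and \cite[(6.4.20)--(6.4.22)]{AAR1999} with piecewise coefficients'' and then lists the result, so your reduction to the classical recurrence together with the case-by-case handling of the modified polynomials at $\alpha=-1$ and/or $\beta=-1$ is precisely the intended verification, only spelled out in more detail than the paper bothers to give.
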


\begin{lemma}\label{lemma_increasing}
For any $k\in\NN_0$ and $\alpha,\beta\ge-1,$ the generalized Jacobi polynomials $J_k^{\alpha,\beta}(z)$ satisfy
\begin{align}
&J_k^{\alpha,\beta}(z) = 
 b_{1,k}^{\alpha,\beta}J_k^{\alpha+1,\beta}(z) +  b_{2,k}^{\alpha,\beta}J_{k-1}^{\alpha+1,\beta}(z),\label{LemEQ2(1)}\\
 &J_k^{\alpha,\beta}(z) = 
 b_{1,k}^{\alpha,\beta}J_k^{\alpha,\beta+1}(z) -  b_{2,k}^{\beta,\alpha}J_{k-1}^{\alpha,\beta+1}(z),\label{LemEQ2(2)}\\
&J_k^{\alpha,\beta}(z) = c_{1,k}^{\alpha,\beta}J_k^{\alpha+2,\beta}(z) + c_{2,k}^{\alpha,\beta}J_{k-1}^{\alpha+2,\beta}(z) + c_{3,k}^{\alpha,\beta}J_{k-2}^{\alpha+2,\beta}(z),\label{LemEQ2(3)}
\end{align}
where
\begin{align*}
&\left( b_{1,k}^{\alpha, \beta}, b_{2,k}^{\alpha, \beta}\right) = \begin{cases}
\left(1,0\right), & k=0, \alpha,\beta\ge -1,\\
\left(2,-1\right), & k=1, \alpha=\beta=-1,\\
\left(\frac{k+\alpha+\beta+1}{2k+\alpha+\beta+1},  -\frac{k+\beta}{2k+\alpha+\beta+1}\right),& {\rm{otherwise}},
\end{cases}
\\
&\left(c_{1,k}^{\alpha,\beta},c_{2,k}^{\alpha,\beta},c_{3,k}^{\alpha,\beta}\right)= 
\left( b_{1,k}^{\alpha,\beta}b_{1,k}^{\alpha+1,\beta},
b_{1,k}^{\alpha,\beta}b_{2,k}^{\alpha+1,\beta} + b_{2,k}^{\alpha,\beta} b_{1,k-1}^{\alpha+1,\beta},
  b_{2,k}^{\alpha,\beta} b_{2,k-1}^{\alpha+1,\beta}\right).
\end{align*}
\end{lemma}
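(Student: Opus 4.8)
The plan is to recognize \eqref{LemEQ2(1)}--\eqref{LemEQ2(3)} as the standard parameter-raising (``contiguous'') relations for Jacobi polynomials, valid in classical form for $\alpha,\beta>-1$, and then to (i) match the classical coefficients against the ``otherwise'' entries of $b_{j,k}^{\alpha,\beta}$ and $c_{j,k}^{\alpha,\beta}$, (ii) extend the admissible range to $\alpha,\beta\ge -1$ by a polynomial-identity argument in the parameters, and (iii) dispose by direct computation of the finitely many exceptional pairs $(k,\alpha,\beta)$ singled out in the piecewise formulas. Relations \eqref{LemEQ2(2)} and \eqref{LemEQ2(3)} will then be obtained from \eqref{LemEQ2(1)} alone, by reflection and by iteration respectively.

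For \eqref{LemEQ2(1)} I would begin from the classical identity $(2k+\alpha+\beta+1)J_k^{\alpha,\beta}(z)=(k+\alpha+\beta+1)J_k^{\alpha+1,\beta}(z)-(k+\beta)J_{k-1}^{\alpha+1,\beta}(z)$ for $k\ge 1$, $\alpha,\beta>-1$ (recorded in \cite[(6.4.20)--(6.4.22)]{AAR1999}, \cite[(3.110)--(3.111)]{STW2011}); dividing through by $2k+\alpha+\beta+1$ gives exactly the ``otherwise'' pair $\big(b_{1,k}^{\alpha,\beta},b_{2,k}^{\alpha,\beta}\big)=\big(\tfrac{k+\alpha+\beta+1}{2k+\alpha+\beta+1},-\tfrac{k+\beta}{2k+\alpha+\beta+1}\big)$, while $k=0$ is the trivial $J_0^{\alpha,\beta}=J_0^{\alpha+1,\beta}=1$. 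To extend to $\alpha,\beta\ge -1$ I would use the hypergeometric representation \eqref{jacobi_hyper}: for each fixed $k$ the coefficient of $\big(\tfrac{z-1}{2}\big)^j$ in $J_k^{\alpha,\beta}(z)$ is a polynomial in $(\alpha,\beta)$, so after clearing the factor $2k+\alpha+\beta+1$ the identity becomes, coefficient by coefficient in $z$, a polynomial identity in $(\alpha,\beta)$ that already holds on the open set $\{\alpha,\beta>-1\}$ and hence holds identically, in particular when $\alpha$ and/or $\beta$ equals $-1$. The one place this reasoning does not apply is where the \emph{definition} of the polynomial departs from \eqref{jacobi_hyper}, namely $J_1^{-1,-1}(z):=z$; there I would simply check directly, using $J_1^{0,-1}(z)=\tfrac{z+1}{2}$ and $J_0^{0,-1}(z)=1$, that $2J_1^{0,-1}(z)-J_0^{0,-1}(z)=z$, which is \eqref{LemEQ2(1)} with the prescribed $\big(b_{1,1}^{-1,-1},b_{2,1}^{-1,-1}\big)=(2,-1)$. (The $k=1$ corners in which exactly one of $\alpha,\beta$ equals $-1$ are ``otherwise'' cases, already covered by step (ii); a spot check such as $J_1^{\alpha,-1}(z)=(\alpha+1)\tfrac{z+1}{2}=\tfrac{\alpha+1}{\alpha+2}J_1^{\alpha+1,-1}(z)$ confirms consistency.)

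Relation \eqref{LemEQ2(2)} I would deduce from \eqref{LemEQ2(1)} via the reflection symmetry $J_k^{\alpha,\beta}(z)=(-1)^kJ_k^{\beta,\alpha}(-z)$, which holds for classical Jacobi polynomials, extends to all real parameters by \eqref{jacobi_hyper}, and is consistent with $J_1^{-1,-1}(z)=z$. Replacing $(z,\alpha,\beta)$ by $(-z,\beta,\alpha)$ in \eqref{LemEQ2(1)} and multiplying by $(-1)^k$ sends $J_k^{\beta+1,\alpha}(-z)\mapsto J_k^{\alpha,\beta+1}(z)$ and $J_{k-1}^{\beta+1,\alpha}(-z)\mapsto -J_{k-1}^{\alpha,\beta+1}(z)$; since $b_{1,k}^{\alpha,\beta}$ is symmetric in $(\alpha,\beta)$ (check each of the three pieces) this yields precisely $J_k^{\alpha,\beta}(z)=b_{1,k}^{\alpha,\beta}J_k^{\alpha,\beta+1}(z)-b_{2,k}^{\beta,\alpha}J_{k-1}^{\alpha,\beta+1}(z)$. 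Finally, \eqref{LemEQ2(3)} follows by applying \eqref{LemEQ2(1)} twice: write $J_k^{\alpha,\beta}=b_{1,k}^{\alpha,\beta}J_k^{\alpha+1,\beta}+b_{2,k}^{\alpha,\beta}J_{k-1}^{\alpha+1,\beta}$, then expand $J_k^{\alpha+1,\beta}$ and $J_{k-1}^{\alpha+1,\beta}$ by \eqref{LemEQ2(1)} with first index $\alpha+1$, and collect the coefficients of $J_k^{\alpha+2,\beta}$, $J_{k-1}^{\alpha+2,\beta}$, $J_{k-2}^{\alpha+2,\beta}$; this reproduces $\big(c_{1,k}^{\alpha,\beta},c_{2,k}^{\alpha,\beta},c_{3,k}^{\alpha,\beta}\big)=\big(b_{1,k}^{\alpha,\beta}b_{1,k}^{\alpha+1,\beta},\,b_{1,k}^{\alpha,\beta}b_{2,k}^{\alpha+1,\beta}+b_{2,k}^{\alpha,\beta}b_{1,k-1}^{\alpha+1,\beta},\,b_{2,k}^{\alpha,\beta}b_{2,k-1}^{\alpha+1,\beta}\big)$, with the degenerate contributions for $k\le 1$ vanishing because $b_{2,0}^{\cdot,\cdot}=0$ (and $J_{k-2}$ is absent when $k\le 1$).

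The routine but delicate part is the bookkeeping buried in the piecewise coefficients. One must make sure that whenever $\alpha$ and/or $\beta$ equals $-1$ the polynomials $J_k^{\alpha+1,\beta}$, $J_{k-1}^{\alpha+1,\beta}$ (and their $\alpha+2$ analogues) still have their nominal degrees, so that no term silently disappears from the classical identities before the limiting step, and that the low-degree corners $k\in\{0,1\}$ with $\alpha=\beta=-1$ stay consistent with the ad hoc value $J_1^{-1,-1}(z)=z$. Beyond that, the argument is substitution and comparison of coefficients; the references \cite{AAR1999,STW2011} supply the classical relations, and \eqref{jacobi_hyper} supplies the continuation in the parameters.
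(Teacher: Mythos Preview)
Your proposal is correct and follows essentially the same approach as the paper, which does not give a standalone proof but simply states that these relations are ``derived from \cite[(3.110)--(3.111)]{STW2011} and \cite[(6.4.20)--(6.4.22)]{AAR1999} with piecewise coefficients.'' Your write-up fleshes out exactly what this means: start from the classical contiguous relation, extend the parameter range by the polynomial-identity argument based on \eqref{jacobi_hyper}, and check the single exceptional case $k=1$, $\alpha=\beta=-1$ by hand; the reflection argument for \eqref{LemEQ2(2)} and the two-fold application of \eqref{LemEQ2(1)} for \eqref{LemEQ2(3)} are precisely what the stated formula for $(c_{1,k}^{\alpha,\beta},c_{2,k}^{\alpha,\beta},c_{3,k}^{\alpha,\beta})$ in terms of the $b$'s is encoding.
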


\begin{lemma}\label{lemma_decrease}
For any $k\in\NN_0$ and $\alpha,\beta\ge-1,$ the generalized Jacobi polynomials $J_k^{\alpha,\beta}(z)$  satisfy
\begin{align}
&\frac{1-z}{2} J_k^{\alpha+1,\beta}(z)= e_{1,k}^{\alpha,\beta} J_k^{\alpha,\beta}(z) + e_{2,k}^{\alpha,\beta} J_{k+1}^{\alpha,\beta}(z),\label{LemEQ3(1)}\\[0.1em]
&\frac{1+z}{2} J_k^{\alpha,\beta+1}(z)= e_{1,k}^{\beta,\alpha} J_k^{\alpha,\beta}(z) - e_{2,k}^{\alpha,\beta} J_{k+1}^{\alpha,\beta}(z),\label{LemEQ3(2)}\\[0.1em]
&\left(\frac{1-z}{2}\right)^2J_{k}^{\alpha+2,\beta}(z) = 
g_{1,k}^{\alpha,\beta} J_{k+2}^{\alpha,\beta}(z)+ 
g_{2,k}^{\alpha,\beta} J_{k+1}^{\alpha,\beta}(z)+ 
g_{3,k}^{\alpha,\beta} J_{k}^{\alpha,\beta}(z),\label{LemEQ3(3)}
\end{align}
where
\begin{align*}
&\left( e_{1,k}^{\alpha, \beta}, e_{2,k}^{\alpha, \beta}\right) = \begin{cases}
\left(\frac{1}{2}, -\frac{1}{2} \right), & k=0, \alpha=\beta= -1,\\
\left(0,-1\right), & k=1, \alpha=\beta=-1,\\
\left(\frac{k+\alpha+1}{2k+\alpha+\beta+2},  -\frac{k+1}{2k+\alpha+\beta+2}\right),& {\rm{otherwise}},
\end{cases}
\\
&\left(g_{1,k}^{\alpha,\beta},g_{2,k}^{\alpha,\beta},g_{3,k}^{\alpha,\beta}\right) =
\left( e_{2,k}^{\alpha+1,\beta} e_{2,k+1}^{\alpha,\beta},\,
 e_{1,k}^{\alpha+1,\beta} e_{2,k}^{\alpha,\beta} +e_{2,k}^{\alpha+1,\beta}e_{1,k+1}^{\alpha,\beta},\,
e_{1,k}^{\alpha+1,\beta} e_{1,k}^{\alpha,\beta}\right).
\end{align*}
\end{lemma}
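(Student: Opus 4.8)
The statement to prove is Lemma~\ref{lemma_decrease}, which provides three ``degree-lowering in the weight / degree-raising in the polynomial'' identities: the two first-order relations \eqref{LemEQ3(1)}--\eqref{LemEQ3(2)} multiplying $J_k^{\alpha+1,\beta}$ by $\frac{1-z}{2}$ (resp. $J_k^{\alpha,\beta+1}$ by $\frac{1+z}{2}$), and the second-order relation \eqref{LemEQ3(3)} multiplying $J_k^{\alpha+2,\beta}$ by $\left(\frac{1-z}{2}\right)^2$. These are in a precise sense the ``transposes'' of the identities in Lemma~\ref{lemma_increasing}, so my plan is to derive them by dualizing Lemma~\ref{lemma_increasing} via the orthogonality relation \eqref{jacobi_ortho}, and to handle the exceptional low-degree cases separately by direct computation from the explicit formulas for $J_0^{-1,-1}, J_1^{-1,-1}, J_0^{\alpha,-1}$, etc.

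\medskip
\noindent\textbf{Main line of argument.} For generic parameters ($\alpha,\beta>-1$, or more precisely away from the degenerate configurations), the idea is as follows. Fix $\alpha,\beta\ge -1$ and consider \eqref{LemEQ3(1)}: since $\frac{1-z}{2}J_k^{\alpha+1,\beta}(z)$ is a polynomial of degree $k+1$, it can be expanded in the basis $\{J_j^{\alpha,\beta}\}_{j=0}^{k+1}$. I would show all coefficients except those of $J_{k+1}^{\alpha,\beta}$ and $J_k^{\alpha,\beta}$ vanish: for $j\le k-1$, compute $\big(\frac{1-z}{2}J_k^{\alpha+1,\beta}, J_j^{\alpha,\beta}\big)_{\varpi^{\alpha,\beta}} = \big(J_k^{\alpha+1,\beta}, \frac{1-z}{2}J_j^{\alpha,\beta}\big)_{\varpi^{\alpha,\beta}} = \big(J_k^{\alpha+1,\beta}, J_j^{\alpha,\beta}\big)_{\varpi^{\alpha+1,\beta}}$, and since $J_j^{\alpha,\beta}$ has degree $j\le k-1 < k$, this inner product is $0$ by the orthogonality \eqref{jacobi_ortho} of $J_k^{\alpha+1,\beta}$ against lower-degree polynomials. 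Hence only $j=k+1,k$ survive, giving the two-term form. The two coefficients $e_{1,k}^{\alpha,\beta}, e_{2,k}^{\alpha,\beta}$ are then pinned down either by equating leading coefficients (using \eqref{jacobi_hyper} to read off the $z^{k+1}$ and $z^k$ coefficients of each Jacobi polynomial) and evaluating at one convenient point such as $z=1$ (where $J_k^{\alpha+1,\beta}(1)=\binom{k+\alpha+1}{k}$), or simply by comparing with the known contiguous relations \cite[(6.4.20)-(6.4.22)]{AAR1999} cited in the text. Relation \eqref{LemEQ3(2)} follows by the symmetry $J_k^{\alpha,\beta}(z)=(-1)^k J_k^{\beta,\alpha}(-z)$ applied to \eqref{LemEQ3(1)} with $\alpha,\beta$ swapped, which automatically swaps the roles and produces the $e_{1,k}^{\beta,\alpha}$ together with the sign change on the $e_{2,k}^{\alpha,\beta}$ term. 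Finally \eqref{LemEQ3(3)} is obtained by iterating \eqref{LemEQ3(1)}: write $\left(\frac{1-z}{2}\right)^2 J_k^{\alpha+2,\beta} = \frac{1-z}{2}\big(e_{1,k}^{\alpha+1,\beta}J_k^{\alpha+1,\beta} + e_{2,k}^{\alpha+1,\beta}J_{k+1}^{\alpha+1,\beta}\big)$ and apply \eqref{LemEQ3(1)} again to each term, collecting coefficients of $J_{k+2}^{\alpha,\beta}, J_{k+1}^{\alpha,\beta}, J_k^{\alpha,\beta}$; this reproduces exactly the stated $\left(g_{1,k}^{\alpha,\beta},g_{2,k}^{\alpha,\beta},g_{3,k}^{\alpha,\beta}\right)$ as the advertised products of $e$-coefficients, so no new computation is needed beyond bookkeeping.

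\medskip
\noindent\textbf{The exceptional cases.} The genuine obstacle — and the reason the coefficient formulas are piecewise — is the degenerate range where $\alpha,\beta\in\{-1\}$ and $k$ is small, i.e.\ exactly where the modified definitions $J_0^{-1,-1}=1$, $J_1^{-1,-1}=z$, $J_k^{-1,-1}=\frac{z-1}{2}\frac{z+1}{2}J_{k-2}^{1,1}$ and the one-sided modifications $J_k^{\alpha,-1}$, $J_k^{-1,\beta}$ deviate from the hypergeometric formula, and where moreover the orthogonality \eqref{jacobi_ortho} fails or the weight $\varpi^{\alpha,\beta}$ is non-integrable. For these I would not use the duality argument but instead verify each identity by brute substitution of the explicit low-degree polynomials. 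For instance, for \eqref{LemEQ3(1)} with $k=0$, $\alpha=\beta=-1$: the left side is $\frac{1-z}{2}J_0^{0,-1}(z)=\frac{1-z}{2}$, and the right side is $e_{1,0}^{-1,-1}J_0^{-1,-1}(z)+e_{2,0}^{-1,-1}J_1^{-1,-1}(z) = \tfrac12\cdot 1 + (-\tfrac12)\cdot z$, which matches; for $k=1$, $\alpha=\beta=-1$, the left side is $\frac{1-z}{2}J_1^{0,-1}(z)$, which by the listed formula $J_1^{0,-1}(z)=\frac{1}{1}\cdot\frac{z+1}{2}J_0^{0,1}(z)=\frac{z+1}{2}$ equals $\frac{1-z}{2}\cdot\frac{1+z}{2}=\frac{1-z^2}{4}$, and the right side is $0\cdot J_1^{-1,-1}(z) + (-1)\cdot J_2^{-1,-1}(z) = -\frac{z-1}{2}\frac{z+1}{2}J_0^{1,1}(z) = \frac{1-z^2}{4}$, again matching. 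One similarly checks the $k=0,1$ exceptions for \eqref{LemEQ3(2)} and, for \eqref{LemEQ3(3)}, that the product formulas for the $g$'s still hold in the boundary cases because each step of the iteration only invokes \eqref{LemEQ3(1)} at parameters that have already been validated. I expect this case analysis — tracking precisely which $(k,\alpha,\beta)$ triples force which branch of the piecewise definitions, and confirming the degenerate coefficient values — to be the most delicate and error-prone part, while the generic duality argument is routine once set up.
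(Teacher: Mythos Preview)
Your proposal is correct and in fact more detailed than the paper's treatment: the paper does not prove Lemma~\ref{lemma_decrease} but simply records it as a consequence of the classical contiguous relations \cite[(6.4.20)--(6.4.22)]{AAR1999} and \cite[(3.110)--(3.111)]{STW2011}, with the piecewise coefficients accounting for the modified low-degree definitions. Your route---orthogonality to show the expansion terminates after two terms, leading-coefficient matching (or evaluation at $z=1$) to pin down $e_{1,k}^{\alpha,\beta},e_{2,k}^{\alpha,\beta}$, the symmetry $J_k^{\alpha,\beta}(z)=(-1)^kJ_k^{\beta,\alpha}(-z)$ for \eqref{LemEQ3(2)}, iteration of \eqref{LemEQ3(1)} for \eqref{LemEQ3(3)}, and brute-force checks at $(k,\alpha,\beta)\in\{(0,-1,-1),(1,-1,-1)\}$---is exactly the standard derivation underlying those citations.

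One point to tighten: your duality argument requires $\varpi^{\alpha,\beta}$ integrable, i.e.\ $\alpha,\beta>-1$ strictly, whereas the lemma allows $\alpha,\beta\ge-1$. On the boundary $\alpha=-1$ or $\beta=-1$ with $k$ not in the explicitly exceptional list, the ``otherwise'' formula for $e_{i,k}^{\alpha,\beta}$ still applies but orthogonality is unavailable. You allude to this but conflate ``non-integrable weight'' with ``modified definition''; these are distinct issues (degree reduction in the range $\alpha,\beta\ge-1$ occurs only at $k=1,\alpha=\beta=-1$). To close the gap cleanly, either invoke continuity in $(\alpha,\beta)$---the identity is polynomial in $z$ with coefficients rational in $\alpha,\beta$, and the only pole of $e_{i,k}^{\alpha,\beta}=\big(\frac{k+\alpha+1}{2k+\alpha+\beta+2},-\frac{k+1}{2k+\alpha+\beta+2}\big)$ in the closed region is at $k=0,\alpha=\beta=-1$, already handled---or substitute the explicit formulas $J_k^{\alpha,-1}(z)=\frac{k+\alpha}{k}\frac{z+1}{2}J_{k-1}^{\alpha,1}(z)$ and $J_k^{-1,\beta}(z)=\frac{k+\beta}{k}\frac{z-1}{2}J_{k-1}^{1,\beta}(z)$ to reduce to parameters $>-1$.
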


\begin{lemma}\label{lemma_differential}
For any $k\in\NN_0$ and $\alpha,\beta\ge-1,$ the generalized Jacobi polynomials $J_k^{\alpha,\beta}(z)$  satisfy
\begin{equation}\label{LemEQ1}
\partial_z J_k^{\alpha,\beta}(z) = d_k^{\alpha,\beta} J_{k-1}^{\alpha+1,\beta+1}(z),
\end{equation}
where 
\begin{equation*}
d_k^{\alpha,\beta} = \begin{cases}
1,\quad & k=1,\alpha=\beta=-1,\\
\frac{k+\alpha+\beta+1}{2},\quad &{\rm{otherwise}}.
\end{cases}
\end{equation*}
\end{lemma}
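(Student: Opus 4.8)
The plan is to argue directly from the hypergeometric representation \eqref{jacobi_hyper}, which by construction remains the operative definition of $J_k^{\alpha,\beta}$ for every $\alpha,\beta\ge-1$ and $k\in\NN_0$ with the single exception of $J_1^{-1,-1}$, where the paper substitutes $J_1^{-1,-1}(z)=z$. The case $k=0$ is immediate: $\partial_z J_0^{\alpha,\beta}=\partial_z 1=0$, and the right-hand side vanishes under the standing convention $J_{-1}^{\alpha+1,\beta+1}\equiv0$. So it suffices to treat $k\ge1$.

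For $k\ge1$ with $(k,\alpha,\beta)\neq(1,-1,-1)$ I would differentiate \eqref{jacobi_hyper} term by term (legitimate, since $J_k^{\alpha,\beta}$ is a polynomial): differentiating $\bigl(\tfrac{z-1}{2}\bigr)^j$ produces $\tfrac j2\bigl(\tfrac{z-1}{2}\bigr)^{j-1}$, and after reindexing with $i=j-1$ and cancelling the factor $j$ against the $j!$ in the denominator one arrives at
\begin{equation*}
\partial_z J_k^{\alpha,\beta}(z)=\frac12\sum_{i=0}^{k-1}\frac{(\alpha+i+2)_{k-1-i}\,(k+\alpha+\beta+1)_{i+1}}{i!\,(k-1-i)!}\left(\frac{z-1}{2}\right)^{i}.
\end{equation*}
Then, using $(k+\alpha+\beta+1)_{i+1}=(k+\alpha+\beta+1)(k+\alpha+\beta+2)_i$, the common factor $\tfrac{k+\alpha+\beta+1}{2}$ pulls out, and the remaining sum is exactly \eqref{jacobi_hyper} at degree $k-1$ with parameters $(\alpha+1,\beta+1)$, since $\alpha+i+2=(\alpha+1)+i+1$ and $k+\alpha+\beta+2=(k-1)+(\alpha+1)+(\beta+1)+1$. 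This gives $\partial_z J_k^{\alpha,\beta}=\tfrac{k+\alpha+\beta+1}{2}J_{k-1}^{\alpha+1,\beta+1}$, i.e.\ the stated identity with $d_k^{\alpha,\beta}=\tfrac{k+\alpha+\beta+1}{2}$; note $\alpha+1,\beta+1\ge0$, so the right-hand polynomial is an ordinary Jacobi polynomial and no degenerate parameter range is entered.

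It then remains to dispose of the lone exceptional case $k=1$, $\alpha=\beta=-1$ by inspection: $\partial_z J_1^{-1,-1}(z)=\partial_z z=1$, while $d_1^{-1,-1}J_0^{0,0}(z)=1\cdot 1=1$, matching the special value $d_1^{-1,-1}=1$ recorded in the statement. The only genuinely delicate point in the whole argument is bookkeeping rather than analysis: one must confirm that \eqref{jacobi_hyper} indeed stays the definition throughout the generalized regimes $\alpha=-1$ and/or $\beta=-1$ (so term-by-term differentiation applies verbatim) and that the Pochhammer manipulations in the reindexing never produce a vanishing or ill-defined factor. Both follow at once from the observation that a degree drop in \eqref{jacobi_hyper} occurs exactly when $-k-\alpha-\beta\in\{1,\dots,k\}$, which for $\alpha,\beta\ge-1$ forces precisely $(k,\alpha,\beta)=(1,-1,-1)$ — the one case handled separately.
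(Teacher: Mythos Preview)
Your argument is correct. The paper itself does not supply a proof of this lemma: it merely states that Lemmas~\ref{lemma_three}--\ref{lemma_differential} are derived from \cite[(3.110)--(3.111)]{STW2011} and \cite[(6.4.20)--(6.4.22)]{AAR1999} ``with piecewise coefficients'' and then records the conclusions. Your direct differentiation of the hypergeometric sum \eqref{jacobi_hyper}, followed by the reindexing $i=j-1$ and the extraction of the common factor $(k+\alpha+\beta+1)/2$, is the standard derivation and is exactly what those references do for classical parameters; you have correctly observed that for $\alpha,\beta\ge-1$ the only parameter triple at which \eqref{jacobi_hyper} fails to define a degree-$k$ polynomial is $(k,\alpha,\beta)=(1,-1,-1)$, and you dispatch that case by the paper's substitute definition $J_1^{-1,-1}(z)=z$. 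Nothing is missing.
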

Hereafter, we use the convention that $J^{\alpha,\beta}_{k}= b_{1,k}^{\alpha,\beta}= b_{2,k}^{\alpha,\beta} \equiv  0$ for $k<0$.

\subsection{Generalized Koornwinder polynomials}

For $\balpha=(\alpha_0,\alpha_1,\alpha_2,\alpha_3) \in [-1,+\infty)^4$, 
the generalized Koornwinder polynomials ${\JJ}_{\bell}^{\balpha}(\hat{\bm x})$, $\bell=(\ell_1,\ell_2,\ell_3) \in \NN_0^3$
on the reference tetrahedron $\hat{\TT}$
can be defined through the generalized Jacobi polynomials and the collapsed coordinate transform from the reference cube  to $\hat{\TT}$   \cite{LiWang_rational} as
\begin{equation}\label{koorn_3d}
\begin{array}{rl}
&{\JJ}_{\bell}^{{\balpha}}(\hat{ \bm x}):={\JJ}_{\ell_1,\ell_2,\ell_3}^{\alpha_0,\alpha_1,\alpha_2,\alpha_3}(\hat{\bm x})
 =(1-\hx_2-\hx_3)^{\ell_1} {J_{\ell_1}^{\alpha_0,\alpha_1}}\left(\dfrac{2\hx_1}{1-\hx_2-\hx_3}-1\right)(1-\hx_3)^{\ell_2}\\[1em]
&\qquad\times {J_{\ell_2}^{2\ell_1+\alpha_0+\alpha_1+1,\alpha_2}}\left(\dfrac{2\hx_2}{1-\hx_3}-1\right) { J_{\ell_3}^{2\ell_1+2\ell_2+\alpha_0+\alpha_1+\alpha_2+2,\alpha_3}}(2\hx_3-1).
\end{array}
\end{equation}
Denote by $\chi(x)= \max(\lfloor -x\rfloor, 0)$ where $\lfloor s\rfloor$ is the integer part of the real number $s$.
The generalized Koornwinder polynomials ${\JJ}_{\bell}^{\balpha}(\hat{\bm x})$
 are fully orthogonal  with respect to the Jacobi weight function $\omega^{\balpha}(\bm{\hx}):=(1-\hx_1-\hx_2-\hx_3)^{\alpha_0} \hx_1^{\alpha_1} \hx_2^{\alpha_2}\hx_3^{\alpha_3},$
\begin{equation}\label{ortho_3d}
\begin{array}{rl}
\left( {\JJ}_{{\bell}}^{{\balpha}},{\JJ}_{\bm{k}}^{{\balpha}}\right)_{\omega^{\balpha},\hat{\TT}}=&\gamma_{\bell}^{\balpha}\delta_{\bell,\bm{k}},
\quad \ell_1, k_1 \ge \chi(\alpha_0) + \chi(\alpha_1),\, \ell_2, k_2\ge \chi(\alpha_2),\, \ell_3, k_3\ge \chi(\alpha_3),\\[1.em]
\gamma_{\bell}^{\balpha}=&h_{\ell_1}^{\alpha_0,\alpha_1} 
h_{\ell_2}^{2\ell_1+\alpha_0+\alpha_1+1,\alpha_2}
h_{\ell_3}^{2\ell_1+2\ell_2+\alpha_0+\alpha_1+\alpha_2+2,\alpha_3},
\end{array}
\end{equation}
where $h_{k}^{\alpha,\beta}$ is defined as in \eqref{jacobi_ortho}.

Various recurrence relations and differentiation properties of generalized Koornwinder polynomials  are consequently  achieved  according to those  of generalized Jacobi polynomials in Lemma \ref{lemma_increasing}-\ref{lemma_differential}.
For the sake of brevity, 
we conclude these identities of generalized Koornwinder polynomials in Appendix \ref{section_increase}-\ref{section_diff}.

Define the column vector 
\begin{equation*}\label{PP_m}
\PP^m=
\left(
\begin{array}{c}
\mathbf{P}^{m}_0\\
\PP^{m}_1\\
\vdots\\
\PP^{m}_m
\end{array}
\right),\quad
\PP^{m}_k = \left(
\begin{array}{c}
{\JJ}^{\balpha}_{k,0,m-k}(\bm \hx)\\
{\JJ}^{\balpha}_{k,1,m-k-1}(\bm \hx)\\
\vdots\\
{\JJ}^{\balpha}_{k,m-k,0}(\bm \hx)
\end{array}
\right),\quad 0\le k\le m,
\end{equation*}
for all generalized Koornwinder polynomials of  degree  $m$.
It is well known that 
\begin{equation}\label{def_rm}
\PP^m\in \RR^{r_m \times 1},\quad
r_m: = \binom{m+2}{m}.
\end{equation}
Thus, the following column vector
\begin{equation}\label{bmPM}
\bm{P}_M = 
\left(
\begin{array}{c}
\PP^0\\
\PP^1\\
\vdots\\
\PP^M
\end{array}
\right),
\end{equation}
 contains all generalized Koornwinder  polynomials of degree no greater than $M$.

The three-term recurrence relation for $\JJ_{\bell}^{\balpha}$ is concluded in the following theorem.


\begin{theorem}\label{3Recurrence}
For any $\bell\in \NN_0^3$ and $\balpha\in [-1,+\infty)^4,$ it holds that
\begin{align}
\hx_1 {\JJ}_{{\bell}}^{{\balpha}}(\hat{\bs  x})&=
 \sum\limits_{p=-1}^1\sum\limits_{q=-1}^1 \sum\limits_{r=-1}^1\mathscr{C}_{p,q,r}(\bell,\balpha) {\JJ}^{\balpha}_{{\bell}-(p,\, q-p,\, r-q)}(\hat{\bs  x}),\label{recurrence_x1}\\[0.05em]
 \hx_2 {\JJ}_{\bell}^{\balpha} (\hat{\bs  x})&=
 \sum\limits_{q=-1}^1  \sum\limits_{r=-1}^1\mathscr{C}_{q,r}(\bell,\balpha){\JJ}^{\balpha}_{{\bell}-(0,\, q,\,r-q)}(\hat{\bs  x}),\label{recurrence_x2}\\[0.05em]
\hx_3 {\JJ}_{\bell}^{\balpha} (\hat{\bs  x})&= 
\sum\limits_{r=-1}^1 \mathscr{C}_{r}(\bell,\balpha){\JJ}^{\balpha}_{{\bell}-(0,\, 0,\, r)}(\hat{\bs  x}),\label{recurrence_x3}
\end{align}
where  $\mathscr{C}_{p,q,r}(\bell,\balpha)$, $\mathscr{C}_{q,r}(\bell,\balpha)$ and $\mathscr{C}_{r}(\bell,\balpha)$ are expansion coefficients presented in Appendix \ref{threecoeff}.
Equivalently, for any $m=|\bell|\in \NN_0$ and $\bm{\hx}\in  \hat{\mathcal{T}}$, there exist unique matrices $A_{m}\in \RR^{3 r_m\times r_{m+1}}$, $B_{m}(\bm{\hx})\in \RR^{3 r_m\times r_m}$ and $C_{m}
\in \RR^{ 3r_m\times r_{m-1}}$  with 
\begin{align*}
&A_m = {\begin{pmatrix}
   E^1_0      &F^1_0          \\[0.2em]
   G^1_1     & E^1_1 &  F^1_1  \\[0.2em]
      & \ddots   & \ddots  & \ddots    \\[0.2em]                            
&      & \ddots   & \ddots  & \ddots    \\[0.2em]                            
  &&&   G^1_m      & E^1_m  &   F^1_m \\[0.2em]
      E^2_0           \\[0.2em]
         & E^2_1    \\[0.2em]
      &   & \ddots      \\[0.2em]                            
      &&   & \ddots      \\[0.2em]                            
&  &&        & E^2_m   \\[0.2em]
       E^3_0          \\[0.2em]  
         & E^3_1    \\[0.2em]  
      &   & \ddots     \\[0.2em]                            
    &  &   & \ddots     \\[0.2em]                             
&  &&        & E^3_m 
  \end{pmatrix}},
\qquad
C_m ={  \begin{pmatrix}
   Y^1_0      &Z^1_0           \\[0.2em] 
       X^1_1  & Y^1_1 & \ddots  \\[0.2em]                       
      & X^1_2  & \ddots  & Z^1_{m-2} \\[0.2em]                          
      && \ddots   & Y^1_{m-1}  \\[0.2em]                          
  &&& X_m^1
\\   
     Y^2_0          \\[0.2em]    
         & Y^2_1    \\[0.2em]    
      &   & \ddots     \\[0.2em]                             
  & &        & Y^2_{m-1}  \\[0.2em]    
\\   
\\
     Y^3_0          \\[0.2em]    
         & Y^3_1    \\[0.2em]    
      &   & \ddots      \\[0.2em]                            
  &&        & Y^3_{m-1} \\[0.2em]    
\\[0.2em]    
\end{pmatrix}},
\end{align*}
\begin{align*}
 &B_m(\hat{\bm x}) ={\begin{pmatrix}
   V^1_0-\hat x_1 I      &W^1_0         \\[0.2em]  
   U^1_1     & V^1_1-\hat x_1 I &  W^1_1 \\[0.2em]  
      & U^1_2 & \ddots  &  \ddots     \\[0.2em]                          
      && \ddots   & V^1_{m-1}-\hat x_1 I  & W^1_{m-1}     \\[0.2em]                          
  &&&   U^1_m      & V^1_m -\hat x_1 I 
 \\[0.2em]  
       V^2_0-\hat x_2 I         \\[0.2em]  
         & V^2_1-\hat x_2 I    \\[0.2em]  
      &   & \ddots     \\[0.2em]                            
  &&        & V^2_{m-1}-\hat x_2 I  \\[0.2em]  
  &&        && V^2_m -\hat x_2 I 
\\[0.2em]  
     V^3_0 -\hat x_3 I          \\[0.2em] 
         & V^3_1-\hat x_3 I    \\[0.2em] 
      &   & \ddots      \\[0.2em]                          
  &&        & V^3_{m-1}-\hat x_3 I  \\[0.2em] 
  &&        && V^3_m -\hat x_3 I 
\end{pmatrix}},
\end{align*}
such that
\begin{equation}\label{3recurrence}
 A_{m} \PP^{m+1} + B_{m}(\hat{\bm x}) \PP^m + C_{m} \PP^{m-1} = \bm{0},
\end{equation}
where
$E^i_k\in \RR^{(m+1-k)\times (m+2-k)}$, $V^i_k\in \RR^{(m+1-k)\times (m+1-k)}$ and $Y^i_k\in \RR^{(m+1-k)\times (m-k)}$  are tridiagonal for $i=1,2$ and diagonal for $i=3$;
$F^1_k\in \RR^{(m+1-k)\times (m+1-k)} $, $W^1_k\in \RR^{(m+1-k)\times (m-k)} $ and $Z^1_k\in \RR^{(m+1-k)\times (m-k-1)} $ are lower tridiagonal (i.e., the main diagonal plus  two immediate subdiagonals);
 and $G^1_k\in \RR^{(m+1-k)\times (m+3-k)} $, $U^1_k\in \RR^{(m+1-k)\times (m+2-k)} $,  and $X^1_k\in \RR^{(m+1-k)\times (m+1-k)} $ are upper tridiagonal (i.e., the main diagonal plus  two immediate supdiagonals). 



\end{theorem}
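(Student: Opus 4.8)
The plan is to establish the scalar recurrences \eqref{recurrence_x1}--\eqref{recurrence_x3} first, by a direct computation on the collapsed‑coordinate product \eqref{koorn_3d} and the one‑dimensional identities of Lemmas \ref{lemma_three}--\ref{lemma_differential}, and then to repackage those identities in block form and read off the sparsity of the blocks. Throughout I write $\eta_1=\tfrac{2\hx_1}{1-\hx_2-\hx_3}-1$, $\eta_2=\tfrac{2\hx_2}{1-\hx_3}-1$, $\eta_3=2\hx_3-1$, $\mu_2=2\ell_1+\alpha_0+\alpha_1+1$, $\mu_3=2\ell_1+2\ell_2+\alpha_0+\alpha_1+\alpha_2+2$, and use the elementary facts $1-\hx_2-\hx_3=\tfrac{1-\eta_2}{2}\tfrac{1-\eta_3}{2}$, $1-\hx_3=\tfrac{1-\eta_3}{2}$, $\hx_1=\tfrac{1+\eta_1}{2}(1-\hx_2-\hx_3)$, $\hx_2=\tfrac{1+\eta_2}{2}(1-\hx_3)$, $\hx_3=\tfrac{1+\eta_3}{2}$. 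The case $\hx_3$ is immediate: multiplication by $\hx_3=\tfrac{1+\eta_3}{2}$ only affects the last factor of \eqref{koorn_3d}, and Lemma \ref{lemma_three} applied to $J_{\ell_3}^{\mu_3,\alpha_3}(\eta_3)$ (with $\mu_3$ unchanged) expresses $\tfrac{1+\eta_3}{2}J_{\ell_3}^{\mu_3,\alpha_3}$ as a combination of $J_{\ell_3-1}^{\mu_3,\alpha_3},J_{\ell_3}^{\mu_3,\alpha_3},J_{\ell_3+1}^{\mu_3,\alpha_3}$, which is precisely \eqref{recurrence_x3}.

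The case $\hx_1$ is the heart of the matter ($\hx_2$ being a strict sub‑case), and I would treat it in three successive steps, one per factor of \eqref{koorn_3d}. \emph{Step (i):} write
$\hx_1\JJ_{\bell}^{\balpha}=(1-\hx_2-\hx_3)^{\ell_1+1}\bigl[\tfrac{1+\eta_1}{2}J_{\ell_1}^{\alpha_0,\alpha_1}(\eta_1)\bigr](1-\hx_3)^{\ell_2}J_{\ell_2}^{\mu_2,\alpha_2}(\eta_2)J_{\ell_3}^{\mu_3,\alpha_3}(\eta_3)$ and apply Lemma \ref{lemma_three} to the bracket, producing $J_{\ell_1-p}^{\alpha_0,\alpha_1}(\eta_1)$ for $p\in\{-1,0,1\}$ with a leftover factor $(1-\hx_2-\hx_3)^{p+1}=\bigl(\tfrac{1-\eta_2}{2}\bigr)^{p+1}(1-\hx_3)^{p+1}$. \emph{Step (ii):} absorb $\bigl(\tfrac{1-\eta_2}{2}\bigr)^{p+1}$ into $J_{\ell_2}^{\mu_2,\alpha_2}(\eta_2)$ — by Lemma \ref{lemma_three} when $p=0$, by \eqref{LemEQ3(3)} of Lemma \ref{lemma_decrease} when $p=1$, and by \eqref{LemEQ2(3)} of Lemma \ref{lemma_increasing} when $p=-1$ — obtaining a combination of $J_{\ell_2+p-q}^{\mu_2-2p,\alpha_2}(\eta_2)$, $q\in\{-1,0,1\}$, where $\mu_2-2p=2(\ell_1-p)+\alpha_0+\alpha_1+1$ is exactly the right parameter. \emph{Step (iii):} the power of $1-\hx_3$ that remains works out to $(1-\hx_3)^{q+1}=\bigl(\tfrac{1-\eta_3}{2}\bigr)^{q+1}$; absorb it into $J_{\ell_3}^{\mu_3,\alpha_3}(\eta_3)$ by the same trichotomy (Lemma \ref{lemma_three}, \eqref{LemEQ3(3)}, \eqref{LemEQ2(3)} for $q=0,1,-1$), obtaining $J_{\ell_3+q-r}^{\mu_3-2q,\alpha_3}(\eta_3)$, $r\in\{-1,0,1\}$, with $\mu_3-2q=2(\ell_1-p)+2(\ell_2+p-q)+\alpha_0+\alpha_1+\alpha_2+2$. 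Collecting the three factors, the $(p,q,r)$‑term is exactly $\JJ_{\bell-(p,\,q-p,\,r-q)}^{\balpha}(\hat{\bm x})$, so \eqref{recurrence_x1} holds and $\mathscr{C}_{p,q,r}(\bell,\balpha)$ is the product of the three one‑dimensional coefficients from steps (i)--(iii), as recorded in Appendix \ref{threecoeff}. Identity \eqref{recurrence_x2} is obtained by the same argument applied to $\hx_2=\tfrac{1+\eta_2}{2}(1-\hx_3)$, step (i) now acting on $J_{\ell_2}^{\mu_2,\alpha_2}(\eta_2)$ while $\ell_1$ is untouched.

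For the matrix form I would reorganize \eqref{recurrence_x1}--\eqref{recurrence_x3}. Since the $(p,q,r)$‑term of $\hx_i\JJ_{\bell}^{\balpha}$ has total degree $|\bell|-r$, the three values $r\in\{1,0,-1\}$ separate the contributions of $\PP^{m-1},\PP^m,\PP^{m+1}$, which defines $C_m$, $B_m(\hat{\bm x})$ and $A_m$; the blocks $V^i_k-\hat x_iI$ in $B_m(\hat{\bm x})$ are placed so that the $i$‑th group of $r_m$ rows of \eqref{3recurrence} reads $\hat x_i\PP^m=A_m^i\PP^{m+1}+B_m^i\PP^m+C_m^i\PP^{m-1}$. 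The first index of the target, $\ell_1-p$ with $p\in\{-1,0,1\}$ for $\hx_1$ and $p=0$ for $\hx_2,\hx_3$, gives the block‑tridiagonal (resp. block‑diagonal) structure in the blocks $\PP^m_k$; within a fixed pair of blocks the target is pinned down by its second index $\ell_2-q+p$ (the third being then fixed by the degree), so each sub‑block is banded in the second index: tridiagonal when the first‑index shift is $0$ (the blocks $E^i_k,V^i_k,Y^i_k$, which further degenerate to diagonal when $i=3$ since then $q=0$ too), a lower band (main diagonal plus two subdiagonals) when $p=+1$ (the blocks $F^1_k,W^1_k,Z^1_k$), and an upper band when $p=-1$ (the blocks $G^1_k,U^1_k,X^1_k$). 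This yields the displayed forms of $A_m$, $B_m(\hat{\bm x})$, $C_m$, and uniqueness follows from the linear independence of $\{\JJ_{\bell'}^{\balpha}\}_{|\bell'|\le m+1}$, i.e., the completeness of the generalized Koornwinder system.

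The main obstacle is the bookkeeping in steps (ii)--(iii): the parameters $\mu_2,\mu_3$ depend on $\ell_1$ and on $(\ell_1,\ell_2)$, so shifting $\ell_1$ (resp. $\ell_2$) by $\mp1$ forces $\mu_2$ (resp. $\mu_3$) to shift by $\pm2$, which is exactly compensated by the leftover powers $0,1,2$ of $\tfrac{1-\eta_2}{2}$ (resp. $\tfrac{1-\eta_3}{2}$) through the parameter‑raising identity \eqref{LemEQ2(3)} and the weighted parameter‑lowering identity \eqref{LemEQ3(3)}. One must verify that these leftover exponents come out to be $0,1,2$ and nothing else, that the index shifts then assemble into $(p,\,q-p,\,r-q)$ with all of $p,q,r\in\{-1,0,1\}$, and that every boundary case (a negative index, or a Jacobi parameter falling below $-1$) carries a vanishing coefficient by the conventions of Lemmas \ref{lemma_three}--\ref{lemma_differential}, so that \eqref{recurrence_x1}--\eqref{recurrence_x3} and the stated block sparsity hold without exception.
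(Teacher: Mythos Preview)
Your proposal is correct and follows essentially the same route as the paper's own argument in Appendix~\ref{threecoeff}: apply the Jacobi three-term recurrence \eqref{three_1d} to the first relevant factor of \eqref{koorn_3d}, then restore the correct parameters in the remaining factors via \eqref{LemEQ2(3)}, \eqref{LemEQ3(3)}, or \eqref{three_1d} according to whether the leftover power of $\tfrac{1-\eta_j}{2}$ is $0$, $2$, or $1$; the paper carries this out for $\hx_2$ while you spell out the harder case $\hx_1$, and your step~(iii) bookkeeping of the leftover $(1-\hx_3)^{q+1}$ is exactly right. One small slip in your last paragraph: the labels $p=\pm1$ are interchanged --- since $F^1_k,W^1_k,Z^1_k$ sit in column block $k{+}1$ they correspond to $\ell_1'=\ell_1+1$, i.e.\ $p=-1$, and $G^1_k,U^1_k,X^1_k$ to $p=+1$ --- but your conclusion that the former are lower-tridiagonal and the latter upper-tridiagonal is correct either way.
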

We also postpone the derivation of 
coefficients $\mathscr{C}_{p,q,r}(\bell,\balpha)$, $\mathscr{C}_{q,r}(\bell,\balpha)$ and $\mathscr{C}_{r}(\bell,\balpha)$ to Appendix \ref{threecoeff}.
In this paper, we are more interested in generalized Koornwinder polynomials of the case 
\begin{equation}\label{1_koorn_3d}
\begin{array}{rl}
&{\JJ}_{\bell}(\bm{\hat{x}}):={\JJ}_{\ell_1,\ell_2,\ell_3}^{-1,-1,-1,-1}(\bm{\hat{x}}),
\end{array}
\end{equation}
which would be used to design modal basis functions for tetrahedral spectral elements.

\section{Clenshaw algorithm for Koornwinder expansions}\label{clenshew}
In general, the Clenshaw algorithm is designed to evaluate the sum of a finite series of functions which satisfy a linear recurrence relation.
In this section, we set focus on the Clenshaw algorithm 
 to evaluate  the following Koornwinder expansion on the reference tetrahedron,
\begin{equation}\label{f_expansion}
f(\bs{\hx}) = \sum\limits_{|\bell|=0}^M 
\widehat{f}_{\bell} {\JJ}^{\balpha}_{\bell}(\bs{\hx}) = {\bm{P}_M} ^\tr \bm{F}_M,\quad  \bs{\hx}\in \hat{\TT}, M\in\NN,
\end{equation}
 where $\bm{P}_M$ is defined as in \eqref{bmPM}, and

\begin{equation*}
\begin{aligned}
&\bm{F}_M= 
\begin{pmatrix}
\F^0\\
\F^1\\
\vdots\\
\F^M
\end{pmatrix},\quad {\rm{with}}\;
\F^m = 
\begin{pmatrix}
\F^{m}_0\\
\F^{m}_1\\
\vdots\\
\F^{m}_m
\end{pmatrix},\quad 
\F^{m}_k = 
\begin{pmatrix}
\hat{f}_{k,0,m-k}\\
\hat{f}_{k,1,m-k-1}\\
\vdots\\
\hat{f}_{k,m-k,0}
\end{pmatrix},\quad
\begin{array}{l}
0\le k\le m,\\
0\le m\le M.
\end{array}
\end{aligned}
\end{equation*}


Indeed, the three-term recurrence relation \eqref{3recurrence} yields
\begin{equation}\label{G_M}
\mathbb{G}_M\bm{P}_M= 
\bm{e}_{1},
\end{equation}
where $ \mathbb{G}_M $ is a block lower tridiagonal matrix
\begin{align*}
 \mathbb{G}_M=
  \begin{pmatrix}
1          \\
 B_0(\bm{\hx})   & A_0    \\ 
 C_1   & B_1(\bm{\hx}) & A_1\\
         & \ddots &\ddots &\ddots\\
   & &C_{M-1}   & B_{M-1}(\bm{\hx}) & A_{M-1}
  \end{pmatrix}.
  \end{align*}
It has been concluded in \cite[Theorem 3.2.4]{DX2001} that the matrix $A_m$ has full column rank and there exists a generalized inverse $D_m\in\RR^{r_{m+1}\times 3r_m}$ such that 
$$D_m  A_m = I .$$

We claim that the sparsity of $A_m$  admits a sparse $D_m$ as follows,
\begin{align}\label{Dm}
&D_m = \left(\begin{array}{lllllllllllll}
\bm 0& & & & D_{0}^2 & & & &D_{0}^3 \\
&   \ddots  & & & & \ddots & & & &\ddots  \\
& & \bm 0 & & & & D_{m-1}^2 & & & & D_{m-1}^3  \\
& & & \bm 0 & & & & D_{m}^2 & & &  & D_m^3 \\[0.2em]
& & & \bm v_m^1 & & & \bm v_{m-1}^2 &  \bm v_{m}^2  & & &   \bm v_{m-1}^3  &  \bm v_{m}^3 
\end{array}\right),
\end{align}
where $D_k^2, D_k^3\in \RR^{(m+2-k)\times (m+1-k)}$, $\bm v_{m-1}^2, \bm v_{m-1}^3\in \RR^{1\times 2}$ and $\bm v_{m}^1$, $\bm v_{m}^2$, $\bm v_{m}^3 \in \RR.$
Indeed,
\begin{align*}
 D_mA_m= {\footnotesize\begin{pmatrix}
   D_0^2E_0^2+D_0^3E_0^3      &       \\[0.2em]
   &D_1^2E_1^2+D_1^3E_1^3 &    \\[0.2em]
      &  & \ddots     \\[0.2em]                                                      
  &&  & D_{m-1}^2E_{m-1}^2+D_{m-1}^3E_{m-1}^3       \\[0.2em]   
  &&  && D_m^2E_m^2+D_m^3E_m^3       \\[0.2em]   
  &&  &\bm{v}_m^1 G_m^1+ \bm v_{m-1}^2 E_{m-1}^2+{\bm{v}}^3_{m-1} E_{m-1}^3& \bm v_m^1 E_m^1+ \bm v_m^2 E_m^2+\bm v^3_m E_m^3 & \bm v_m^1F_m^1       \\[0.2em]   
  \end{pmatrix}},
\end{align*}
leads to
\begin{align}
 &D_k^2 E_k^2 + D_k^3 E_k^3 = I,\quad 0\le k\le m ,\label{DEident}
 \\[0.1em]
 &\bm v_m^1F_m^1    = 1,\label{lastrow3}\\[0.1em]
&\bm{v}_m^1 G_m^1+ \bm v_{m-1}^2 E_{m-1}^2+{\bm{v}}^3_{m-1} E_{m-1}^3 = \bm 0, \label{lastrow1}\\[0.1em]
&\bm v_m^1 E_m^1+ \bm v_m^2 E_m^2+\bm v^3_m E_m^3 = \bm 0.\label{lastrow2}
\end{align}
Note that $F_m^1\in \RR$,   $G_m^1\in \RR^{1\times 3}$, $E_m^i\in \RR^{1\times 2}$ for $1\le i\le 3$ with $E_m^3(1,2)=0 $,
 while  $E_{m-1}^3\in  \RR^{2\times 3}$ is diagonal and $E_{m-1}^2\in  \RR^{2\times 3}$ is tridiagonal.
  Combining \eqref{lastrow3} with  \eqref{lastrow2} yields 
\begin{equation*}
 (\bm v^3_m,\bm v_m^2,\bm v_m^1) \begin{pmatrix} E_m^3 &0  \\[0.2em]  E_m^2 &0\\[0.2em]  E_m^1 & F_m^1 \end{pmatrix} = (0,0,1).
\end{equation*}
Thus, we  solve that
\begin{align}
\label{vm1}
\begin{split}
 (\bm v^3_m,\bm v_m^2,\bm v_m^1)&= \bm{e}_3^\tr \,\begin{pmatrix} E_m^3  &0\\[0.2em]  E_m^2&0 \\[0.2em]  E_m^1 & F_m^1 \end{pmatrix}^{-1} 
 =\bm{e}_3^\tr\,\begin{pmatrix} \frac{1}{E_m^3(1,1)}  &0 & 0\\[0.4em]  
 -\frac{E_m^2(1,1)}{E_m^3(1,1)E_m^2(1,2)} &\frac{1}{E_m^2(1,2)} & 0 \\[0.4em] 
\frac{E_m^1(1,2) E_m^2(1,1)  - E_m^1(1,1) E_m^2(1,2)}{F_m^1 E_m^3(1,1) E_m^2(1,2)}  & -\frac{E_m^1(1,2)}{E_m^2(1,2)F_m^1} & \frac{1}{F_m^1} \end{pmatrix}
 \\
&=\begin{pmatrix}  \frac{E_m^1(1,2) E_m^2(1,1)  - E_m^1(1,1) E_m^2(1,2)}{F_m^1 E_m^3(1,1) E_m^2(1,2)}  ,  
- \frac{E_m^1(1,2)}{F_m^1  E_m^2(1,2)} ,\frac{1}{F_m^1}  \end{pmatrix}.
\end{split}
\end{align}
%
%
%
%
Substituting \eqref{vm1} into \eqref{lastrow1} and letting $\bm v_{m-1}^2(1)=0$, we further  obtain
\begin{align*}
(\bm v_{m-1}^3,\bm v_{m-1}^2(2))  \begin{pmatrix} E_{m-1}^3\\[0.2em]  E_{m-1}^2(2,:) \end{pmatrix} =- \frac{1}{F_m^1} \bm G_m^1.
\end{align*}
Owing to fact 
\begin{align*}
\begin{pmatrix} E_{m-1}^3\\[0.2em]
  E_{m-1}^2(2,:)
   \end{pmatrix}^{-1} 
= \begin{pmatrix} \frac{1}{E_{m-1}^3(1,1)}  & 0 & 0 \\[0.3em]  
0&  \frac{1}{E_{m-1}^3(2,2)} & 0\\[0.3em] 
-\frac{ E_{m-1}^2(2,1)}{E_{m-1}^3(1,1)E_{m-1}^2(2,3)} & -\frac{ E_{m-1}^2(2,2)}{E_{m-1}^3(2,2)E_{m-1}^2(2,3)}  
&\frac{ 1}{E_{m-1}^2(2,3)}  \end{pmatrix},
\end{align*}
we find  that
\begin{align}\label{vm2}
\begin{split}
&\bm{v}_{m-1}^2=
\begin{pmatrix}
{0} ,
 -\frac{G_m^1(1,3)}{ F_m^1 E_{m-1}^2(2,3)}
 \end{pmatrix},
\\
&\bm{v}_{m-1}^3  = 
\begin{pmatrix}
 \frac{G_m^1(1,3) E_{m-1}^2(2,1) - G_m^1(1,1) E_{m-1}^2(2,3) }{ F_m^1 E_{m-1}^3(1,1) E_{m-1}^2(2,3)},
\frac{  G_m^1(1,3) E_{m-1}^2(2,2) - G_m^1(1,2)E_{m-1}^2(2,3) }{ F_m^1 E_{m-1}^3(2,2) E_{m-1}^2(2,3)} \end{pmatrix}.
\end{split}
\end{align}

We now determine $D_k^2$ and $D_k^3$ from \eqref{DEident}.
Assume $D^2_k(:,1:m-k)=0$. Then \eqref{DEident} becomes
\begin{align*}
(D_k^3, D_k^2(:,m+1-k))   \begin{pmatrix}
E_k^3  \\[0.2em]
E_k^2(m+1-k,:)
 \end{pmatrix} = I.
\end{align*}
Since $E_k^3\in \RR^{(m+1-k)\times(m+2-k)} $ is diagonal and  $E_k^2\in \RR^{(m+1-k)\times(m+2-k)} $ is tridiagonal,
we derive 
\begin{align*}
 \begin{pmatrix}
E_k^3  \\[0.2em]
E_k^2(m+1-k,:)
 \end{pmatrix}^{-1} =  
 \begin{pmatrix}
\frac{1}{E_k^3(1,1)} \\
&\ddots
\\
&&\frac{1}{E_k^3(m+1-k,m+1-k)}
\\[0.5em]
&-\beta_k\frac{E_k^2(m+1-k,m-k) }{E_k^3(m-k,m-k) } &
-\beta_k\frac{E_k^2(m+1-k,m+1-k) }{E_k^3(m+1-k,m+1-k) } &\beta_k
 \end{pmatrix},
\end{align*}
by denoting $\beta_k = \frac{1}{E_k^2(m+1-k,m+2-k)}$ and $E_k^2(1,0) = 0.$
Thus,
\begin{align}\label{Dk_23}
D_k^3 = \begin{pmatrix}
\frac{1}{E_k^3(1,1)} \\
&\ddots
\\
&&\frac{1}{E_k^3(m-k,m-k)}\\[0.5em]
&&&\frac{1}{E_k^3(m+1-k,m+1-k)}
\\[0.5em]
&&-\beta_k\frac{E_k^2(m+1-k,m-k) }{E_k^3(m-k,m-k) } &-\beta_k\frac{E_k^2(m+1-k,m+1-k) }{E_k^3(m+1-k,m+1-k) } 
 \end{pmatrix}, \quad
  D_k^2 = \begin{pmatrix}
0 \\[0.5em]
&\ddots
\\[0.5em]
&&0
\\[0.5em]
&&&0\\[0.5em]
&&& \beta_k
 \end{pmatrix}.
\end{align}

From \eqref{G_M} and the definition of $D_m$, one readily obtains that
\begin{equation} \label{G_tilde}
\tilde{\mathbb{G}}_M\bm{P}_M = 
\bm{e}_{1},  
\end{equation}
where 
\begin{align}\label{Gmatrix}
\tilde{\mathbb{G}}_M := 
\begin{pmatrix}
1&&&\\
&D_0&&\\
&& \ddots&\\
&&& D_{M-1}
\end{pmatrix} \mathbb{G}_M=
  \begin{pmatrix}
1          \\
 D_0B_0(\bm{\hx})   & I    \\ 
 D_1C_1   & D_1 B_1(\bm{\hx}) & I\\
         & \ddots &\ddots &\ddots\\
   & &D_{M-1}C_{M-1}   & D_{M-1}B_{M-1}(\bm{\hx}) & I
  \end{pmatrix}.
\end{align}
Combining \eqref{f_expansion} and \eqref{G_tilde}, one has
\begin{equation*}
\begin{aligned}
f(\bs{\hx}) &= {\bm{P}_M}^\tr  \bm{F}_M = \bm{e}_{1}^\tr {\tilde{\mathbb{G}}_M}^{-\tr} \bm{F}_M.
\end{aligned}
\end{equation*}
Denote 
\begin{align*}
{\tilde{\mathbb{G}}_M}^{-\tr} \bm{F}_M=\bm{b}_M,\quad{\text{with}}\,\,
\bm b_M=\begin{pmatrix}
\bm b^0\\
\bm b^1\\
\vdots\\
\bm b^M
\end{pmatrix},\quad
\bm b^m\in\RR^{r_m\times 1},\, 0\le m\le M.
\end{align*}
Then $f(\bs{\hx}) =\bm b^0$ is exactly the first entry of $\bm{b}_M$, 
which can be solved recursively by
\begin{equation}
\label{bmRec}
\begin{cases}
\bm b^M = \bm F^M,\\
\bm b^{M-1} = \bm F^{M-1} - B_{M-1}^\tr(\bm{\hx}) D_{M-1}^\tr \bm b^{M},\\
\bm b^m = \bm F^m  - B_m^\tr(\bm{\hx}) D_m^\tr \bm b^{m+1} - C_{m+1}^\tr D_{m+1}^\tr \bm b^{m+2},\quad m= M-2, M-3,\dots,0.
\end{cases}
\end{equation}

Thus, we summarize the Chenshaw algorithm as follows.

\begin{algorithm}[H]
\caption{The Clenshaw Algorithm} 
\hspace*{0.02in} {\bf Input:} $M$, $\bm{F}_M$, $A_i, B_i(\bm{\hx}) \,(0\le i \le M-1)$, $C_i\, (1\le i\le M-1)$  \\
\hspace*{0.02in} {\bf Output:} the value of $f(\bs{\hx})$
\begin{algorithmic}[1]
\State{Compute the matrices: $D_0, D_1, \cdots D_{M-1}$ from \eqref{Dm}, \eqref{vm1}, \eqref{vm2} and \eqref{Dk_23}.}
\State{Solve the linear equation: 
\begin{equation}\label{linear_equ}
{\tilde{\mathbb{G}}_M}^{\tr} \bm{b}_M = \bm{F}_M, 
\end{equation}
through \eqref{bmRec}, where $\tilde{\mathbb{G}}_M$ is defined as in \eqref{Gmatrix}.}
\State{$f(\bs{\hx})=\bm{b}^0.$}
\end{algorithmic}
\end{algorithm}

Since it contains at most thirteen non-zero entries in each column of $B_m$ and $C_m$, and at most two non-zero entries in each column of $D_m$,  only  $ \frac{53 M^3}{6}+ \mathcal{O}(M^2)$ operations are required to solve \eqref{linear_equ}. In return, the Clenshaw algorithm shares the same order of complexity.

%

\section{Sparse spectral-Galerkin method on an arbitrary tetrahedron}\label{spectral-Galerkin}
In this section, we shall  design sparse spectral-Galerkin approximation scheme on an arbitrary tetrahedron $\TT$
with vertices
$$P_j = \bm{x}^{(j)} = (x_{1}^{(j)}, x_{2}^{(j)}, x_{3}^{(j)})^\tr,\quad 0\le j\le 3,$$
which is affine equivalent to the reference tetrahedron $\hat{\TT}$ via
\begin{equation}\label{mapping}
\Psi: \hat{\TT}\rightarrow \TT.
\end{equation}
\subsection{Variational formulation and numerical scheme}

Consider the second-order model equation on the tetrahedron $\TT$:
\begin{equation}\label{model_pde}
\begin{cases}
-\Delta u (\bm{x})+\gamma(\bm{x}) u(\bm{x})=f(\bm{x}), \quad &\bm{x}\in\TT,\\
 u(\bm{x})=g(\bm{x}),\quad &\bm{x}\in\partial\TT,
\end{cases}
\end{equation} 
where $\gamma\ge 0.$ The variational formulation of \eqref{model_pde} reads: 
to find $u\in H^1(\TT)$ such that $u=g$ on $\partial \TT$
and
\begin{equation}\label{variation_pde}
a_\gamma(u,v) := (\nabla u, \nabla v)_{\TT} + ( \gamma u,v)_{\TT} = (f,v)_{\TT},\quad \forall v\in H_0^1(\TT).
\end{equation}
$\gamma$ is dropped from the notation $a_\gamma(\cdot,\cdot)$ when $\gamma=0$.
It is straightforward by the Lax-Milgram lemma \cite{Evans1998} that \eqref{variation_pde} admits a unique solution.

For any $M\in \NN_0$, define the approximation space as 
\begin{equation*}
X_{M} := \P_M(\TT) \cap H^1(\TT),\quad 
X_{M,0}:=\P_M(\TT) \cap H_0^1(\TT).
\end{equation*}
Then the numerical scheme for \eqref{variation_pde} reads: to find $u_{M}\in X_{M}$ such that
\begin{equation}
\begin{cases}
a_\gamma(u_{M}, v_M) = (f, v_M)_{\TT},\quad &\forall v_M\in X_{M,0},\\
\left( u_M, \phi_M \right)_{\partial\TT} 
= \left( g, \phi_M\right)_{\partial\TT},
\quad &\forall \phi_M\in X_M\setminus X_{M,0}.
\end{cases}
\label{scheme_pde}
\end{equation}
It is worthy to note that the second equation in \eqref{scheme_pde} defines a unique $u_b\in X_M\setminus X_{M,0}$,
and the Lax-Milgram  lemma implies a unique solution $u_0\in X_{M,0}$ to  
$a_\gamma(u_{0}, v_M) = (f, v_M)_{\TT} - a_\gamma(u_{b}, v_M)$.
Thus $u_M=u_0+u_b$ is uniquely solvable.  


For the Laplacian eigenvalue problem: 
\begin{equation}\label{model_eigen}
\begin{cases}
-\Delta u(\bm{x}) =\mu u(\bm{x}), \quad &\bm{x}\in\TT,\\
 u(\bm{x})=0,\quad & \bm{x}\in\partial\TT,
\end{cases}
\end{equation} 
the variational formulation is defined by 
\begin{equation}\label{variation_eigen}
a(u,v) = \mu (u,v)_{\TT},\quad \forall v\in H_0^1(\TT),
\end{equation}
and the corresponding numerical scheme reads: to find $u_M\in X_{M,0}$ such that
\begin{equation}\label{scheme_eigen}
a(u_M, v_M) = \mu_M(u_M, v_M)_{\TT},\quad \forall v_M\in X_{M,0}.
\end{equation}

\subsection{Implementations}

\subsubsection{Shape functions}\label{modalbasis}
The space $X_M$ provides much of convenience in treating non-homogeneous boundary conditions and in enforcing continuity across the interface for the tetrahedral spectral element method.
Let
\begin{equation*}
X_M = \{  \varphi_{\bell}(\bs{x})=\hat{\varphi}_{\bell}(\bs{\hat{x}})\circ\Psi^{-1}: 0\le \ell_1,\ell_2,\ell_3,|\bell|\le M \},
\end{equation*}
where $\Psi$ is defined as in \eqref{mapping}
and $\hat{\varphi}_{\bell}$ 
are  proper basis functions defined on the reference tetrahedron.  
We further let $\hat{F}_j$ be the face opposite to the vertex $\hat{P}_j$
 and 
$$\hat{E}_{jk} = \hat{P}_j\hat{P}_k,\quad 0\le j<k \le 3,$$
denote the edge of $\hat{\TT}$ within the endpoints $\hat{P}_j$ and $\hat{P}_k$.

Modal basis functions are split into interior and boundary modes (including face, edge and vertex modes).
The interior modes are identically zero on the tetrahedron boundary, 
and the face modes only have magnitude along one face and are zero at all other faces, while the edge modes only have magnitude along one edge and the vertex modes only have magnitude at one vertex.

$\bullet$ Interior modes:
\begin{equation*}
\begin{array}{rl}
&\quad\hat{\varphi}_{\ell_1,\ell_2,\ell_3}(\bs{\hx})={\JJ}_{\ell_1,\ell_2,\ell_3}(\bs{\hx}),\quad
( \ell_1\geq2,\ell_2\geq1,\ell_3\geq1).
\end{array}
\end{equation*}

$\bullet$ Face modes:
\begin{equation*}
\begin{array}{rl}
\hat{F}_0:& \hat{\varphi}_{1,\ell_2-1,\ell_3}(\bs{\hx})={\JJ}_{0,\ell_2,\ell_3}(\bs{\hx})- \dfrac{\ell_2-1}{\ell_2}{\JJ}_{1,\ell_2-1,\ell_3}(\bs{\hx}),\quad (\ell_2\geq2,\ell_3\geq1),\\[0.6em]
\hat{F}_1:& \hat{\varphi}_{0,\ell_2,\ell_3}(\bs{\hx})={\JJ}_{0,\ell_2,\ell_3}(\bs{\hx})+ \dfrac{\ell_2-1}{\ell_2}{\JJ}_{1,\ell_2-1,\ell_3}(\bs{\hx}),\quad ( \ell_2\geq2,\ell_3\geq1), \\[0.6em]
\hat{F}_2:& \hat{\varphi}_{\ell_1,0,\ell_3}(\bs{\hx})={\JJ}_{\ell_1,0,\ell_3}(\bs{\hx}),\quad (\ell_1\geq2,\ell_3\geq1),\\[0.6em]
\hat{F}_3:& \hat{\varphi}_{\ell_1,\ell_2,0}(\bs{\hx})={\JJ}_{\ell_1,\ell_2,0}(\bs{\hx}),\quad(\ell_1\geq2,\ell_2\geq1).
\end{array}
\end{equation*}

$\bullet$ Edge modes:
\begin{equation*}
\begin{array}{rl}
\hat{E}_{01}:&\hat{\varphi}_{\ell_1,0,0}(\bs{\hx})={\JJ}_{\ell_1,0,0}(\bs{\hx}),\quad (\ell_1\geq2),\\[0.6em]
\hat{E}_{02}:&\hat{\varphi}_{0,\ell_2,0}(\bs{\hx})={\JJ}_{0,\ell_2,0}(\bs{\hx})+ \dfrac{\ell_2-1}{\ell_2} {\JJ}_{1,\ell_2-1,0}(\bs{\hx}),\quad (\ell_2\geq2),\\[0.6em]
\hat{E}_{03}:& \hat{\varphi}_{0,0,\ell_3}(\bs{\hx})= \dfrac{1}{2}{\JJ}_{0,0,\ell_3}(\bs{\hx})+ \dfrac{\ell_3-1}{2\ell_3} {\JJ}_{0,1,\ell_3-1}(\bs{\hx})+ \dfrac{\ell_3-1}{\ell_3} {\JJ}_{1,0,\ell_3-1}(\bs{\hx}),\quad (\ell_3\geq2),\\[0.6em]
\hat{E}_{13}:&\hat{\varphi}_{1,0,\ell_3-1}(\bs{\hx})= \dfrac{1}{2}{\JJ}_{0,0,\ell_3}(\bs{\hx})+ \dfrac{\ell_3-1}{2\ell_3} {\JJ}_{0,1,\ell_3-1}(\bs{\hx})- \dfrac{\ell_3-1}{\ell_3} {\JJ}_{1,0,\ell_3-1}(\bs{\hx}),\quad (\ell_3\geq2),\\[0.6em]
\hat{E}_{12}:&\hat{\varphi}_{1,\ell_2-1,0}(\bs{\hx})={\JJ}_{0,\ell_2,0}(\bs{\hx})- \dfrac{\ell_2-1}{\ell_2} {\JJ}_{1,\ell_2-1,0}(\bs{\hx}),\quad (\ell_2\geq2),\\[0.6em]
\hat{E}_{23}:&\hat{\varphi}_{0,1,\ell_3-1}(\bs{\hx})={\JJ}_{0,0,\ell_3}(\bs{\hx})- \dfrac{\ell_3-1}{\ell_3} {\JJ}_{0,1,\ell_3-1}(\bs{\hx}),\quad (\ell_3\geq2).
\end{array}
\end{equation*}

$\bullet$ Vertex modes:
\begin{equation*}
\begin{array}{rl}
\hat{P}_0:& \hat{\varphi}_{0,0,0}(\bs{\hx})= \dfrac{1}{8}{\JJ}_{0,0,0}(\bs{\hx})- \dfrac{1}{2}{\JJ}_{1,0,0}(\bs{\hx})- \dfrac{1}{4}{\JJ}_{0,1,0}(\bs{\hx})-\dfrac{1}{8}{\JJ}_{0,0,1}(\bs{\hx}),\\[0.6em]
\hat{P}_1:&\hat{\varphi}_{1,0,0}(\bs{\hx})= \dfrac{1}{8}{\JJ}_{0,0,0}(\bs{\hx}) + \dfrac{1}{2}{\JJ}_{1 ,0,0}(\bs{\hx}) -\dfrac{1}{4} {\JJ}_{0,1,0}(\bs{\hx})- \dfrac{1}{8} {\JJ}_{0,0,1}(\bs{\hx}),\\[0.6em]
\hat{P}_2:&\hat{\varphi}_{0,1,0}(\bs{\hx})= \dfrac{1}{4}{\JJ}_{0,0,0} (\bs{\hx}) +  \dfrac{1}{2}{\JJ}_{0,1,0}(\bs{\hx})- \dfrac{1}{4} {\JJ}_{0,0,1}(\bs{\hx}),\\[0.6em]
\hat{P}_3:&\hat{\varphi}_{0,0,1}(\bs{\hx})=\dfrac{1}{2} {\JJ}_{0,0,0}(\bs{\hx})+ \dfrac{1}{2}{\JJ}_{0,0,1}(\bs{\hx}).
\end{array}
\end{equation*}

\begin{remark}
Similar shape functions have been studied in literature, including the modal basis functions proposed by Sherwin and Karniadakis based on  mixed-weight Jacobi polynomials \cite{KS1995, Karniadakis2005} and those designed by Beuchler et al. 
 employing integrated Jacobi polynomials \cite{intlegendre2006, intlegendre2007, intlegendre2008}.
Both of them are expressed as a generalized tensor product of polynomials in one dimensions.
In comparison, our modal basis functions have a  simple presentation in generalized Koornwinder polynomials.
 Specifically, these three kinds of modal basis functions coincide, up to generic constants, with each other in interior modes  and main differences exist in boundary modes.
\end{remark}

\subsubsection{Equivalent algebraic system}

We shall examine the linear system associated with the numerical scheme \eqref{scheme_pde} and \eqref{scheme_eigen} when $g=0$.
It is obvious that $\varphi_{\bell}$ of interior modes provide a series of basis functions for $X_{M,0}$  that 
\begin{equation*}
X_{M,0} = \Span\{ \varphi_{\bell} (\bs{x}): \ell_1\ge 2, \ell_2, \ell_3\ge 1, |\bell|\le M\}.
\end{equation*} 
The basis polynomials are arranged in ${\tilde{\Phi}}_M$ such that
\begin{equation*}
{\tilde{\Phi}}_M=\begin{pmatrix}
\bm{\tilde{\varphi}}_2\\
\bm{\tilde{\varphi}}_3\\
\vdots\\
\bm{\tilde{\varphi}}_{M-2}
\end{pmatrix},\quad {\text{with}}\,\,
\bm{\tilde{\varphi}}_{\ell_1} = \begin{pmatrix}
\bm{\tilde{\varphi}}_{\ell_1,1}\\
\bm{\tilde{\varphi}}_{\ell_1,2}\\
\vdots\\
\bm{\tilde{\varphi}}_{\ell_1,M-\ell_1-1}
\end{pmatrix},\quad
\bm{\tilde{\varphi}}_{\ell_1,\ell_2} = \begin{pmatrix}
{\varphi}_{\ell_1,\ell_2,1}\\
{\varphi}_{\ell_1,\ell_2,2}\\
\vdots\\
{\varphi}_{\ell_1,\ell_2,M-\ell_1-\ell_2}
\end{pmatrix},\quad 
\begin{array}{l}
1\le \ell_2\le M-\ell_1-1,\\
2\le \ell_1\le M-2.
\end{array}
\end{equation*}

Let
\begin{equation*}
u_M(\bs{x}) = \sum\limits_{\ell_1=2}^{M-2} \sum\limits_{\ell_2=1}^{M-\ell_1-1} \sum\limits_{\ell_3=1}^{M-\ell_1-\ell_2} \widehat{u}_{\bell}  \varphi_{\bell} (\bs{x}).
\end{equation*}
The linear system induced by \eqref{scheme_pde} becomes
\begin{equation}\label{linearsystem}
\left( {\mathcal{S}}+{\mathcal{M}}_\gamma \right) \hat{\bm{u}} = \bm{f},
\end{equation}
where
\begin{equation*}
\begin{aligned}
{\mathcal{S}} &= \int_{\TT}  \big[ \nabla {\tilde{\Phi}}_M(\bs x)\big]  \big[\nabla {\tilde{\Phi}}_M(\bs x)\big]^{\tr}  d{\bs x} 
,\quad 
{\mathcal{M}}_\gamma   = \int_{\TT} \gamma(\bs x) {\tilde{\Phi}}_M(\bs x)   {\tilde{\Phi}}_M(\bs x) ^{\tr}  d{\bs x} 
\\[0.2em]
\bm{f}&=  \int_{\TT}  f (\bs x)      {\tilde{\Phi}}_M(\bs x)  d{\bs x},\\
\end{aligned}
\end{equation*}
\begin{equation*}
\begin{aligned}
\hat{\bm{u}} &=
\begin{pmatrix}
\hat{\bm{u}}_2\\
\hat{\bm{u}}_3\\
\vdots\\
\hat{\bm{u}}_{M-2}
\end{pmatrix},\quad {\rm{with}}\;
\hat{\bm{u}}_{\ell_1} = 
\begin{pmatrix}
\hat{\bm{u}}_{\ell_1,1}\\
\hat{\bm{u}}_{\ell_1,2}\\
\vdots\\
\hat{\bm{u}}_{\ell_1,M-\ell_1-1}
\end{pmatrix},\quad 
\hat{\bm{u}}_{\ell_1,\ell_2} = 
\begin{pmatrix}
\widehat{\bm{u}}_{\ell_1,\ell_2,1}\\
\widehat{\bm{u}}_{\ell_1,\ell_2,2}\\
\vdots\\
\widehat{\bm{u}}_{\ell_1,\ell_2,M-\ell_1-\ell_2}
\end{pmatrix},\quad
\begin{array}{l}
1\le \ell_2\le M-\ell_1-1,\\
2\le \ell_1\le M-2.
\end{array}
\end{aligned}
\end{equation*}
The non-zero entries of ${\mathcal{S}}$ and ${\mathcal{M}}_\gamma$ (if $\gamma$ is a constant) can be exactly evaluated owing to the orthogonality.
Furthermore,
the numerical scheme \eqref{scheme_eigen} for eigenvalue problem is equivalent to the following system:
\begin{equation}
{\mathcal{S}} \hat{\bm{u}} = \mu_M {\mathcal{M}} \hat{\bm{u}}.
\end{equation}
Here we drop the notation $\gamma$ from ${\mathcal{M}}_\gamma$ when $\gamma=1$.

We depict 
the non-zero patterns of the stiffness matrix ${\mathcal{S}}$ and the mass matrix ${\mathcal{M}}$ in Figure \ref{exam1_strubd}.
It is observed that $\mathcal{S}$ is a block penta-diagonal matrix and $\mathcal{M}$ is a block tri-diagonal matrix, 
with all blocks being hepta-digonal, which confirm the sparsity of the discrete matrices.

\begin{figure}[H]
\centering
\subfigure{
\begin{minipage}[t]{0.5\linewidth}
\centering
\includegraphics[width=2.4in]{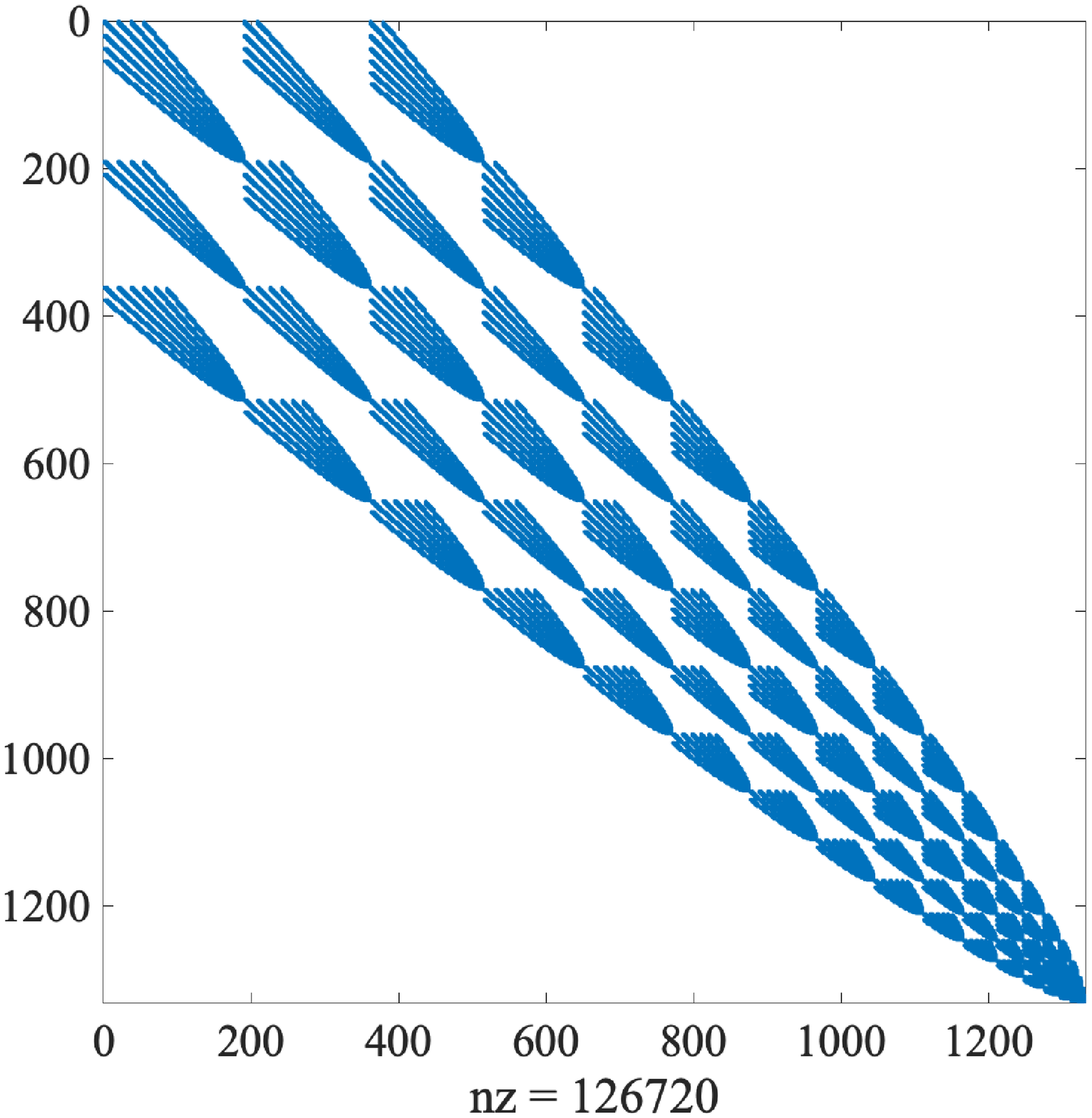}
\end{minipage}%
}%
\subfigure{
\begin{minipage}[t]{0.5\linewidth}
\centering
\includegraphics[width=2.4in]{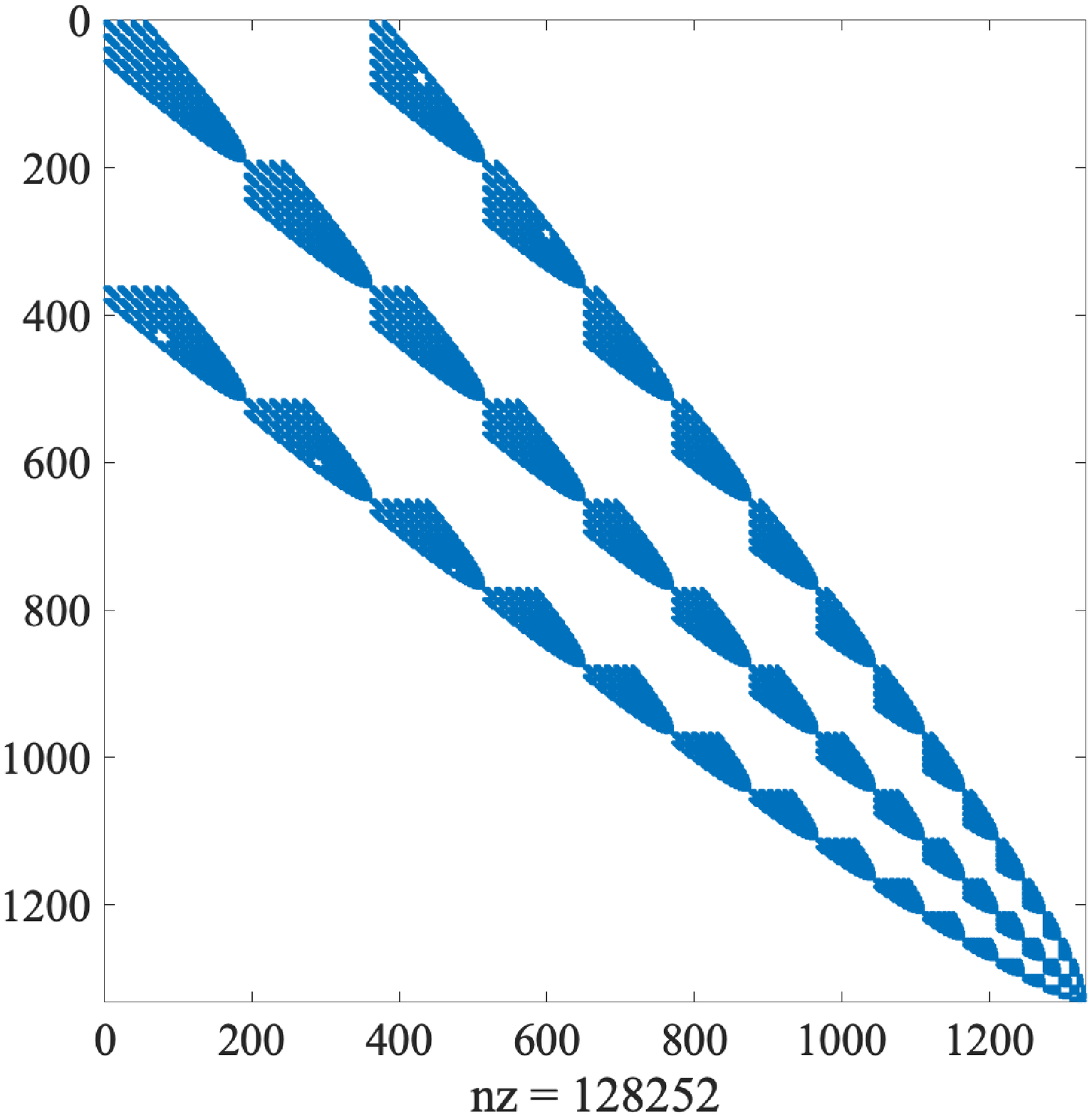}
\end{minipage}%
}%
\centering\vspace{-0.5em}
\caption{The sparse pattern of  $\mathcal{S}$ (left) and $\mathcal{M}$ (right) on the reference tetrahedron when $M=22$.}\label{exam1_strubd}
\end{figure}

\begin{remark}\label{basis_compare}
Condition numbers of the stiffness matrix and the mass matrix associated with different interior bases are quite different.
Without preconditioning, condition numbers of matrices generated by basis functions based on integrated Jacobi polynomials \cite{intlegendre2007} and by the ones proposed by Sherwin and Karniadakis \cite{KS1995} grow as asymptotically as $\mathcal{O}(M^{10})$ and $\mathcal{O}(M^7)$, respectively.
While the condition number of the stiffness matrix induced by our spectral-Galerkin method only grows in $\mathcal{O}(M^4),$ which shares the same order with that of the diagonally preconditioned matrix as Figure \ref{cond_basis} indicated.

\begin{figure}[H]
\centering
\subfigure{
\begin{minipage}[t]{0.3\linewidth}
\centering
\includegraphics[width=1.8in,height=1.6in]{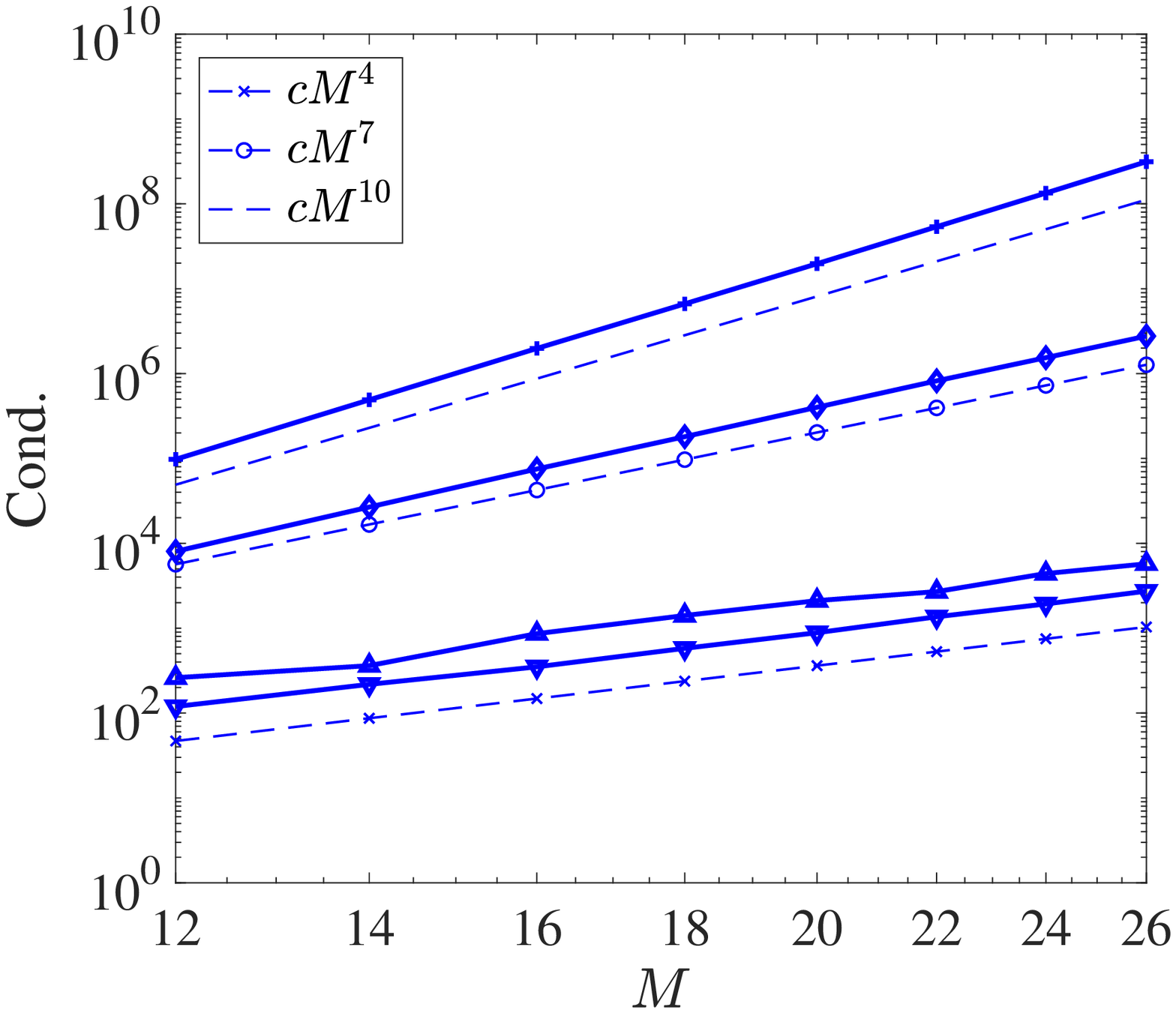}
\end{minipage}%
}%
\subfigure{
\begin{minipage}[t]{0.3\linewidth}
\centering
\includegraphics[width=1.8in,height=1.6in]{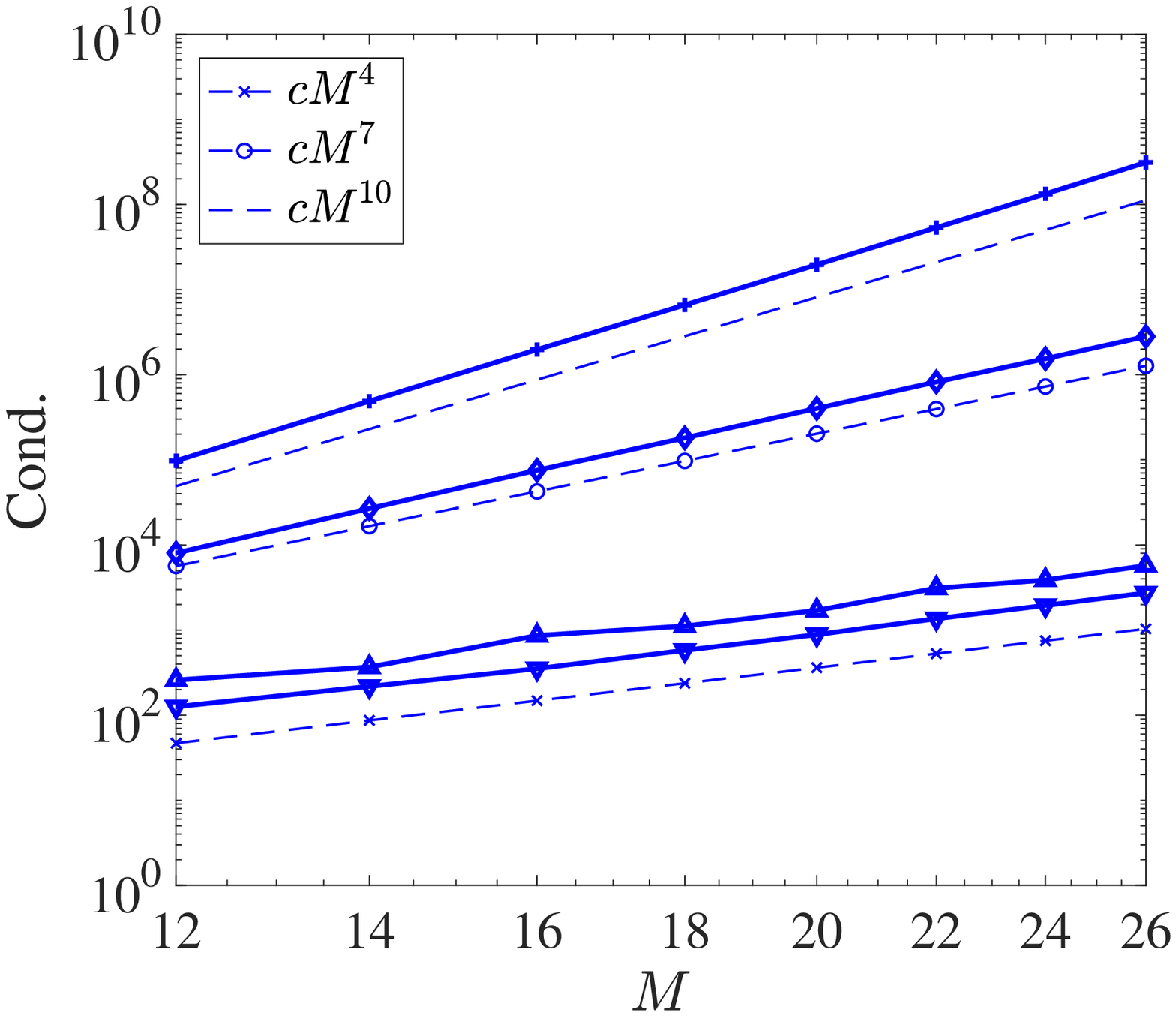}
\end{minipage}%
}%
\subfigure{
\begin{minipage}[t]{0.3\linewidth}
\centering
\includegraphics[width=1.8in,height=1.6in]{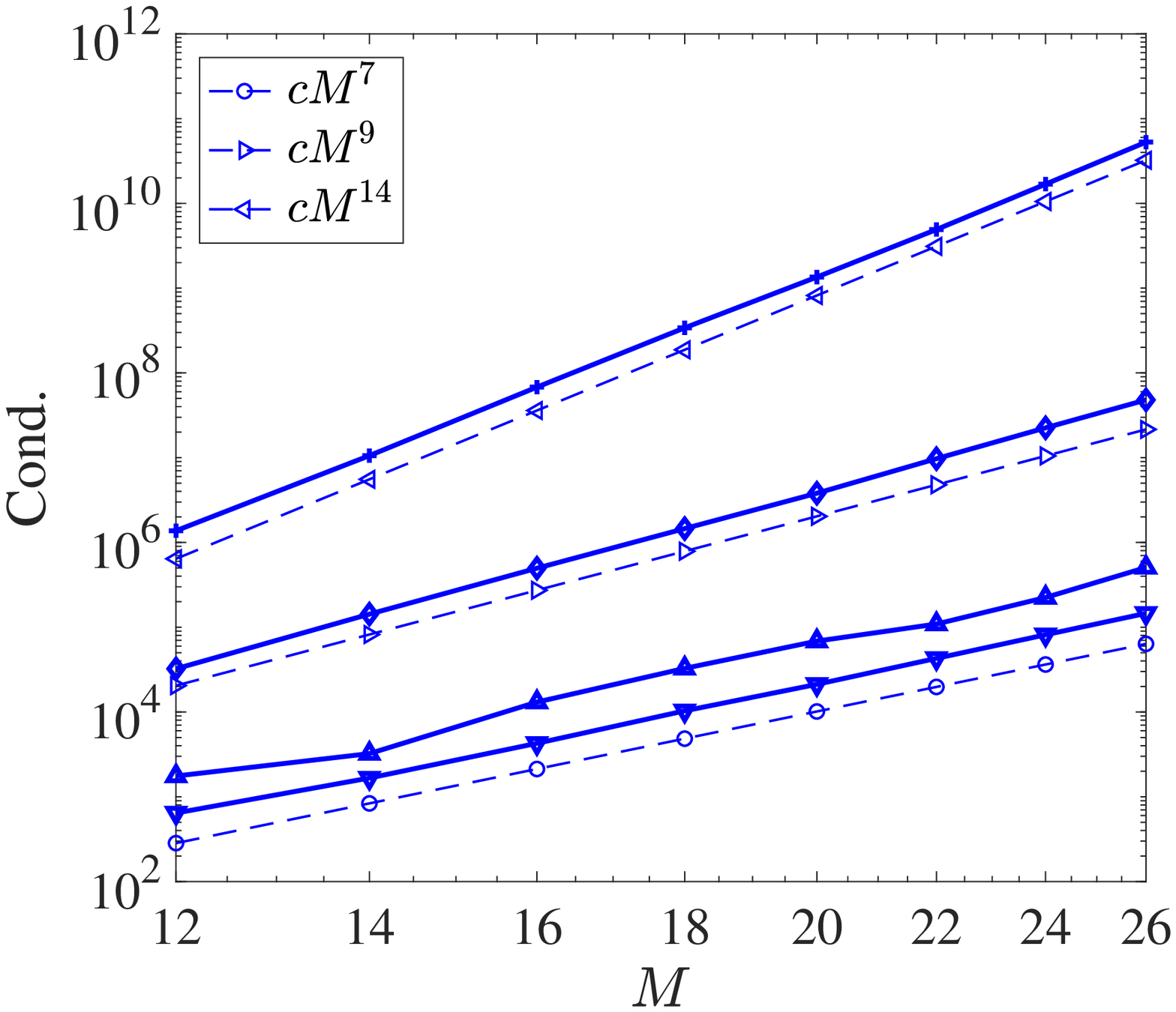}
\end{minipage}%
}%
\centering
\caption{Condition numbers of ${\mathcal{S}}+{\mathcal{M}}$ (left), ${\mathcal{S}}$ (middle) and ${\mathcal{M}}$ (right) associated with different interior basis functions against $M$: 
  $\JJ_{\bell}$ (marked by $``\bigtriangleup"$);
  diagonally preconditioned  (marked by $``\bigtriangledown"$);
basis functions associated with integrated Jacobi polynomials  (marked by $``+"$);
basis functions proposed by Sherwin and Karniadakis (marked by $``\diamond"$). 
}\label{cond_basis}
\end{figure}
\end{remark}

\subsubsection{Stiffness matrix assembling}
To assemble the stiffness matrix, we need to evaluate the integral $(\nabla \varphi_{\bell}, \nabla \varphi_{\bm{k}})_{\TT}$. 
Indeed, the linear mapping $\Psi$ defined in \eqref{mapping} has the following explicit form:
\begin{equation}\label{mappingx}
\bm{x} = \Psi(\bm{\hx}):  = 
\bm{x}^{(0)} (1-\hx_1-\hx_2-\hx_3)  + \bm{x}^{(1)} \hx_1 + \bm{x}^{(2)} \hx_2 +\bm{x}^{(3)} \hx_3.
\end{equation}
It is straightforward by the chain rule in calculus that
\begin{equation*}
\begin{aligned}
\Delta &= \sum\limits_{i=1}^3 \partial_{x_i}^2 = \sum\limits_{i=1}^3 \left( \sum\limits_{j=1}^3 \frac{\partial_{\hx_j}}{\partial_{x_i}} \partial_{\hx_j}\right)^2 \\
&= \sum\limits_{j=1}^3\left( \nabla \hx_j \cdot \nabla \hx_j\right) \partial_{\hx_j}^2 + 2\sum\limits_{1\le j<k\le 3} \left( \nabla \hx_j\cdot \nabla\hx_k\right) \partial_{\hx_j} \partial_{\hx_k}\\
&= \sum\limits_{j=1}^3\left( \nabla \hx_j \cdot \nabla \hx_j\right) \partial_{\hx_j}^2 + \sum\limits_{1\le j<k\le 3}
\left(\nabla \hx_j\cdot \nabla \hx_k \right) \left( \partial^2_{\hx_j} + \partial^2_{\hx_k}\right)
-\sum\limits_{1\le j<k\le 3} \left(\nabla \hx_j\cdot \nabla \hx_k \right) \left( \partial_{\hx_k}-\partial_{\hx_j} \right)^2\\
&=\sum\limits_{j=1}^3 \left( \nabla \hx_j \cdot \nabla(\hx_1+\hx_2+\hx_3)\right) \partial_{\hx_j}^2
 -\sum\limits_{1\le j<k\le 3} \left(\nabla \hx_j\cdot \nabla \hx_k \right) \left( \partial_{\hx_k}-\partial_{\hx_j} \right)^2.
\end{aligned}
\end{equation*}

Combining with geometric interpretation of the cross product and the triple product,
it then follows from \eqref{mappingx} that
\begin{equation*}
\begin{cases}
\nabla \hx_1 = \frac{(\bm{x}^{(2)}-\bm{x}^{(0)}) \times ( \bm{x}^{(3)} - \bm{x}^{(0)})}{ (\bm{x}^{(1)}-\bm{x}^{(0)}, \bm{x}^{(2)}-\bm{x}^{(0)}, \bm{x}^{(3)}-\bm{x}^{(0)}) }=
-\frac{|F_1|}{3|\TT|} \bm{n}_1\\[0.5em]
\nabla \hx_2 = \frac{(\bm{x}^{(3)}-\bm{x}^{(0)}) \times ( \bm{x}^{(1)} - \bm{x}^{(0)})}{ (\bm{x}^{(1)}-\bm{x}^{(0)}, \bm{x}^{(2)}-\bm{x}^{(0)}, \bm{x}^{(3)}-\bm{x}^{(0)}) }=-\frac{|F_2|}{3|\TT|} \bm{n}_2
\\[0.5em]
\nabla \hx_3 = \frac{(\bm{x}^{(1)}-\bm{x}^{(0)}) \times ( \bm{x}^{(2)} - \bm{x}^{(0)})}{ (\bm{x}^{(1)}-\bm{x}^{(0)}, \bm{x}^{(2)}-\bm{x}^{(0)}, \bm{x}^{(3)}-\bm{x}^{(0)}) }=
-\frac{|F_3|}{3|\TT|} \bm{n}_3,\\[0.5em]
\nabla \left( \hx_1+\hx_2+\hx_3\right)=
\frac{(\bm{x}^{(2)}-\bm{x}^{(1)})\times (\bm{x}^{(3)}-\bm{x}^{(1)})}{(\bm{x}^{(1)}-\bm{x}^{(0)}, \bm{x}^{(2)}-\bm{x}^{(0)}, \bm{x}^{(3)}-\bm{x}^{(0)})} = \frac{|F_0|}{3|\TT|} \bm{n}_0,
\end{cases}
\end{equation*}
where  $F_j$ denotes the face opposite to the vertex $P_j$ and $\bm{n}_j$ is the outward normal vector of the face $F_j$ on the tetrahedron $\TT$ for $0\le j\le 3$;
$|\TT|$ and $|F_j|$ stand for the volume of $\TT$ and the area of $F_j$, respectively.
Further let $\langle F_j, F_k\rangle$ be the dihedral angle of the face $F_j$ and $F_k$. Then
\begin{equation*}
\begin{aligned}
&\nabla \hx_j \cdot \nabla(\hx_1+\hx_2+\hx_3) = \frac{|F_0| |F_j|}{9|\TT|^2} \cos \langle F_0, F_j \rangle,\quad 1\le j\le3,\\
&\nabla \hx_j \cdot \nabla \hx_k = -\frac{|F_j| |F_k|}{9|\TT|^2} \cos \langle F_j, F_k \rangle,\quad 1\le j < k\le 3,
\end{aligned}
\end{equation*}
since $\bm{n}_j\cdot \bm{n}_k = -\cos \langle F_j, F_k \rangle$ when $j \neq k.$
Thus, the elements in the stiffness matrix are evaluated by
\begin{equation*}
\begin{aligned}
\left( \nabla {\varphi}_{\bell}, \nabla {\varphi}_{\bm{k}}\right)_\TT=
& -\frac{2}{3|\TT|} \,\bigg[ \sum\limits_{j=1}^3 
|F_0| |F_j|\cos \langle F_0, F_j \rangle   \left( \partial_{\hx_j} \hat{\varphi}_{\bell}, \partial_{\hx_j} \hat{\varphi}_{\bm{k}} \right)_{\hat{\TT}}\\
&\qquad\qquad+ \sum\limits_{1\le j<k\le3}
|F_j| |F_k|\cos \langle F_j, F_k \rangle 
 \left( (\partial_{\hx_k}-\partial_{\hx_j}) \hat{\varphi}_{\bell},
 (\partial_{\hx_k}-\partial_{\hx_j})  \hat{\varphi}_{\bm{k}}\right)_{\hat{\TT}}\bigg].
 \end{aligned}
\end{equation*}
According to Lemma \ref{lemma_diff} and Lemma \ref{lemma_increase}, each derivative on the reference tetrahedron $\hat{\TT}$ is exactly a finite series of Koornwinder-Dubiner polynomials,  which allows us to evaluate the accurate matrix entries by the orthogonality.

\subsubsection{Mass matrix assembling}
When $\gamma$ is a constant, the entries of the mass matrix could be evaluated by
\begin{equation}\label{mass_entry}
\left( {\varphi}_{\bell}, {\varphi}_{\bm{k}}\right)_\TT = 6\gamma |\TT| \left( \hat{\varphi}_{\bell},  \hat{\varphi}_{\bm{k}}\right)_{\hat{\TT}}.
\end{equation}
Again, each $\hat{\varphi}_{\bell}$ is a finite expansion of Koornwinder-Dubiner polynomials based on Lemma \ref{lemma_increase} so that the integration in \eqref{mass_entry} could be evaluated exactly via the orthogonality.

However, when $\gamma=\gamma(x)$ is a variable coefficient, the cost in order to obtain ${\mathcal{M}}_\gamma$ is $\mathcal{O}(M^9)$ by using the qualified numerical quadrature.
In this subsection, we shall assemble the  matrix ${\mathcal{M}}_\gamma$ 
associated with a variable coefficient recursively by making use of the three-term recurrence relation \eqref{3recurrence} to reduce the order of complexity to $\mathcal{O}(M^6)$.

We first rearrange the basis polynomials $\{{\varphi}_{\bell}\}$ with respect to the total degree in $\Phi_M$ where
\begin{equation*}
\Phi_M=\left(
\begin{array}{c}
\bm{\varphi}_4 \\
\bm{\varphi}_5\\
\vdots\\
\bm{\varphi}_M
\end{array}\right),\quad{\text{with}}\,\,
\bm{\varphi}_m = \left(
\begin{array}{c}
\bm{\varphi}_{m,2}\\
\bm{\varphi}_{m,3}\\
\vdots\\
\bm{\varphi}_{m,{m-2}},
\end{array}\right),\quad 
\bm{\varphi}_{m,{k}} = \left(
\begin{array}{c}
\varphi_{k,1,m-k-1}\\
\varphi_{k,2,m-k-2}\\
\vdots\\
\varphi_{k,m-k-1,1}
\end{array}
\right),\quad
\begin{array}{l}
2\le k\le m-2,\\
4\le m\le M.
\end{array}
\end{equation*}
Then,  the matrix in a block form 
\begin{equation}\label{block_S}
\int_{\TT} \gamma(\bs x)
\Phi_M(\bs x)   \Phi_M(\bs x) ^{\tr}  d{\bs x} 
=\left(
\begin{array}{ccc}
H_{4,4} & \cdots  &H_{4,M}\\
\vdots & \ddots & \vdots\\
H_{M,4} &\cdots  &H_{M,M}
\end{array}\right),
\end{equation} 
with 
\begin{align*}
H_{m,k} = \int_\TT \gamma(\bs{x}) 
{\bm{\varphi}}_m(\bm{x})  
{{\bm{\varphi}}}_k(\bm{x})^\tr d\bm{x} ,\quad 4\le m,k \le M,
\end{align*}
could be regarded as a rearrangement of the rows and columns in the matrix ${\mathcal{M}}_\gamma$.

For convenience, all coefficient matrices in the three-term recurrence relation \eqref{3recurrence} and the generalized inverse $D_m$ are equally partitioned into three blocks,
\begin{equation*}
A_m=\left(
\begin{array}{c}
A_m^1\\
A_m^2\\
A_m^3
\end{array}\right),\quad
B_m(\bm{x})+\left(
\begin{array}{c}
x_1 I\\
x_2 I\\
x_3 I
\end{array}
\right)=
 \left(
\begin{array}{c}
B_m^1\\
B_m^2\\
B_m^3
\end{array}\right),\quad
C_m=\left(
\begin{array}{c}
C_m^1\\
C_m^2\\
C_m^3
\end{array}\right),\quad 
D_m=\left(D_m^1,D_m^2,D_m^3\right).
\end{equation*}
For any integer $1\le i\le 3$, it is straightforward 
to obtain that
\begin{equation*}
\begin{aligned}
\int_\TT \gamma(\bm{x}) x_i   
{\bs \varphi}_m(\bm{x})
  {\bm{\varphi}}_{k}(\bm{x})^\tr d \bm{x}&= 
\int_\TT \gamma(\bs{x}) 
\left( C_{m}^i {{\bm{\varphi}}}_{m-1}(\bm{x}) + 
B_m^i {{\bm{\varphi}}}_m(\bm{x})+ 
A_m^i {{\bm{\varphi}}}_{m+1}(\bm{x}) \right)
{{\bm{\varphi}}}_k(\bm{x})^\tr d\bm{x}\\
&= C_{m}^i  H_{m-1,k} + B_m^i H_{m,k} + A_m^i H_{m+1,k}\\
&=\int_\TT \gamma(\bs{x}) {{\bm{\varphi}}}_{m}(\bm{x})
\left(C_{k}^i {{\bm{\varphi}}}_{k-1}(\bm{x})+ B_k^i {{\bm{\varphi}}}_k(\bm{x}) + A_k^i {{\bm{\varphi}}}_{k+1}(\bm{x}) \right)^\tr d\bm{x}\\
&=H_{m,k-1} {C_{k}^{i\,\tr}} + H_{m,k} B_k^{i\,\tr} + H_{m,k+1} A_k^{i\,\tr}.
\end{aligned}
\end{equation*}
As a result, it holds that
\begin{equation}\label{eq_recu}
A_m^i H_{m+1,k} = H_{m,k-1} {C_{k}^{i\,\tr}} + H_{m,k} B_k^{i\,\tr} + H_{m,k+1} A_k^{i\,\tr} -B_m^i H_{m,k} - C_{m}^i  H_{m-1,k}.
\end{equation}
Equivalently,
one has
\begin{equation*}\label{eq_recu_mat}
\begin{aligned}
\left(
\begin{array}{c}
A_m^1\\[0.2em]
A_m^2\\[0.2em]
A_m^3
\end{array}\right) H_{m+1,k}& = 
\left(
\begin{array}{c}
 H_{m,k-1}C_{k}^{1\,\tr}\\[0.2em]
 H_{m,k-1}C_{k}^{2\,\tr}\\[0.2em]
 H_{m,k-1}C_{k}^{3\,\tr}
\end{array}\right)
+ 
\left(
\begin{array}{c}
 H_{m,k}B_{k}^{1\,\tr}\\[0.2em]
 H_{m,k}B_{k}^{2\,\tr}\\[0.2em]
 H_{m,k}B_{k}^{3\,\tr}
\end{array}\right)+
\left(
\begin{array}{c}
 H_{m,k+1}A_{k}^{1\,\tr}\\[0.2em]
 H_{m,k+1}A_{k}^{2\,\tr}\\[0.2em]
 H_{m,k+1}A_{k}^{3\,\tr}
\end{array}\right)\\[0.3em]
&\qquad\quad
-
\left(
\begin{array}{c}
B_{m}^{1}\\[0.2em]
B_{m}^{2}\\[0.2em]
B_{m}^{3}
\end{array}\right) H_{m,k}
-
\left(
\begin{array}{c}
C_{m}^{1}\\[0.2em]
C_{m}^{2}\\[0.2em]
C_{m}^{3}
\end{array}\right) H_{m-1,k}.
\end{aligned}
\end{equation*}
Further recalling that $D_mA_m=I,$ we arrive at
\begin{equation}\label{recu_mat}
\begin{aligned}
H_{m+1,k}& = 
\sum_{i=1}^3 
D_m^iH_{m,k-1}C_{k}^{i\,\tr}
+ 
\sum_{i=1}^3 
D_m^iH_{m,k}B_{k}^{i\,\tr}
+ 
\sum_{i=1}^3 
D_m^iH_{m,k+1}A_{k}^{i\,\tr}
\\
&\,\, -
\sum_{i=1}^3 
D_m^i B_{m}^{i}  H_{m,k}-
\sum_{i=1}^3 
D_m^i C_{m}^{i}  H_{m-1,k}
\end{aligned}
\end{equation}
It indicates that the block $H_{m+1,k}$ is derived by other small matrices known in previous steps.
To obtain each block matrix in \eqref{block_S}, one first needs to compute small blocks
$$H_{4,k} = \int_\TT\gamma(\bs{x}) {{\bm{\varphi}}}_{4}(\bm{x}) {{\bm{\varphi}}}_{k}(\bm{x})^\tr d\bm{x},\quad 4\le k\le 2M-4,$$
where ${{\bm{\varphi}}}_{4}$ only contains the basis function $\varphi_{2,1,1}$.
As $H_{m,k} = H_{k,m}^\tr$ is symmetric,
one then follows \eqref{recu_mat} to derive the blocks
$$H_{m+1,k},\quad m+1\le k \le 2M-m-1,\,\, 4\le m\le M-1.$$
With these blocks arranged as \eqref{block_S} defines,
  ${{\mathcal{M}}}_\gamma$ is consequently derived after a rearrangement.


\section{Numerical experiments}\label{numerical_exper}
To illustrate the validation of our spectral-Galerkin approximation scheme, we carry out some numerical experiments in this section.

\subsection{Numerical examples for source problems}

We shall present some numerical results for source problems in this subsection.

\begin{example}\label{exam1}
Consider the second-order model equation subject to the  homogeneous Dirichlet boundary condition:
\begin{equation}\label{exam1_equ}
\begin{cases}
-\Delta u(\bm{x}) +u(\bm{x})=f(\bm{x}), \quad &  \bm{x}\in\hat{\TT},\\
 u(\bm{x})=0,\quad & \bm{x}\in\partial\hat{\TT},
\end{cases}
\end{equation} 
with the exact solution
\begin{equation}\label{u_exact}
u(\bs{x}) = \sin\frac{\pi x_1}{2} \sin\frac{\pi x_2}{2} \sin\frac{\pi x_3}{2} \sin\frac{\pi (1-x_1-x_2-x_3)}{2}.
\end{equation}
\end{example}

\begin{figure}[H]
\centering
\subfigure{
\begin{minipage}[t]{0.5\linewidth}
\centering
\includegraphics[width=2.3in]{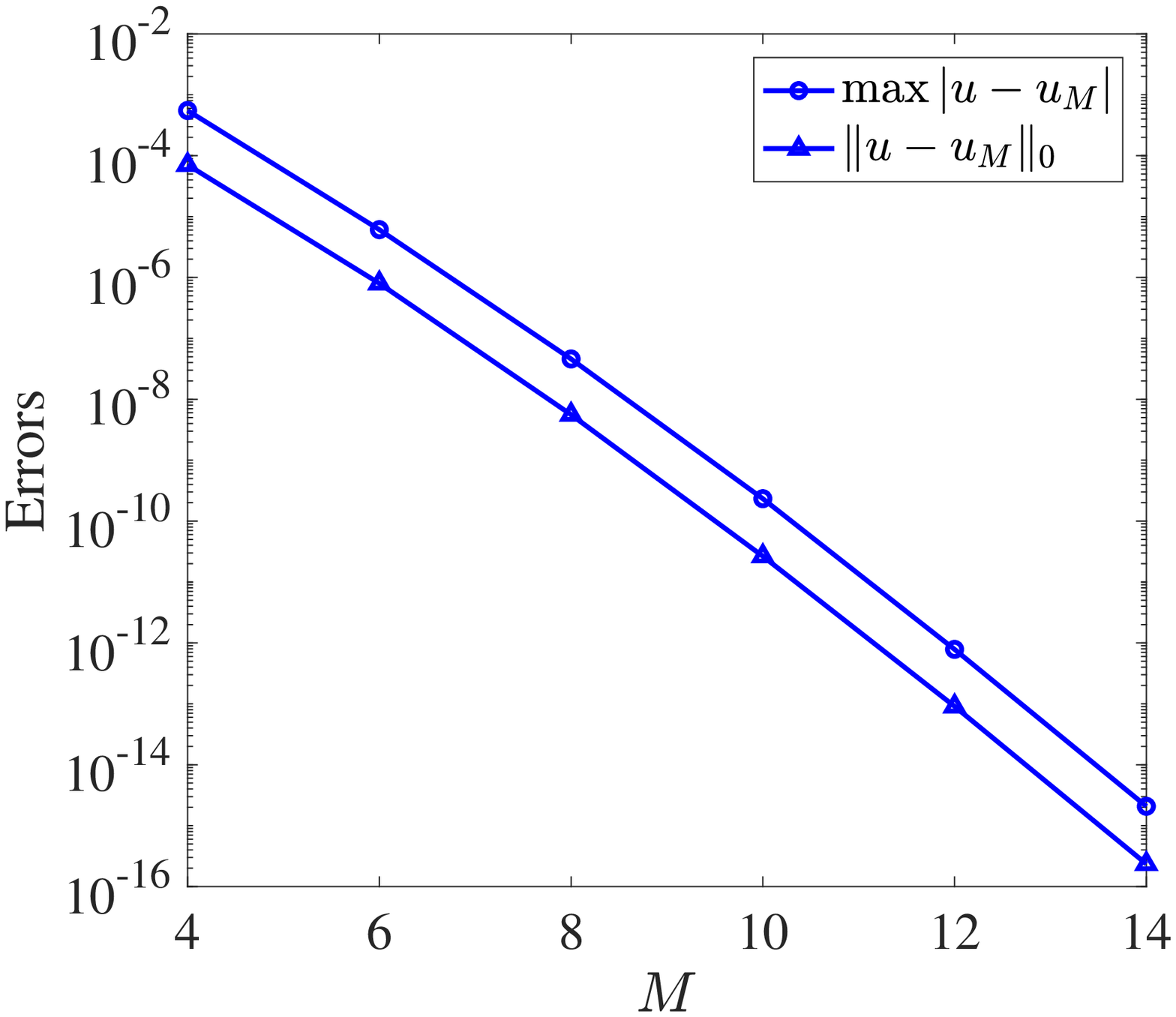}
\end{minipage}%
}%
\subfigure{
\begin{minipage}[t]{0.5\linewidth}
\centering
\includegraphics[width=2.3in]{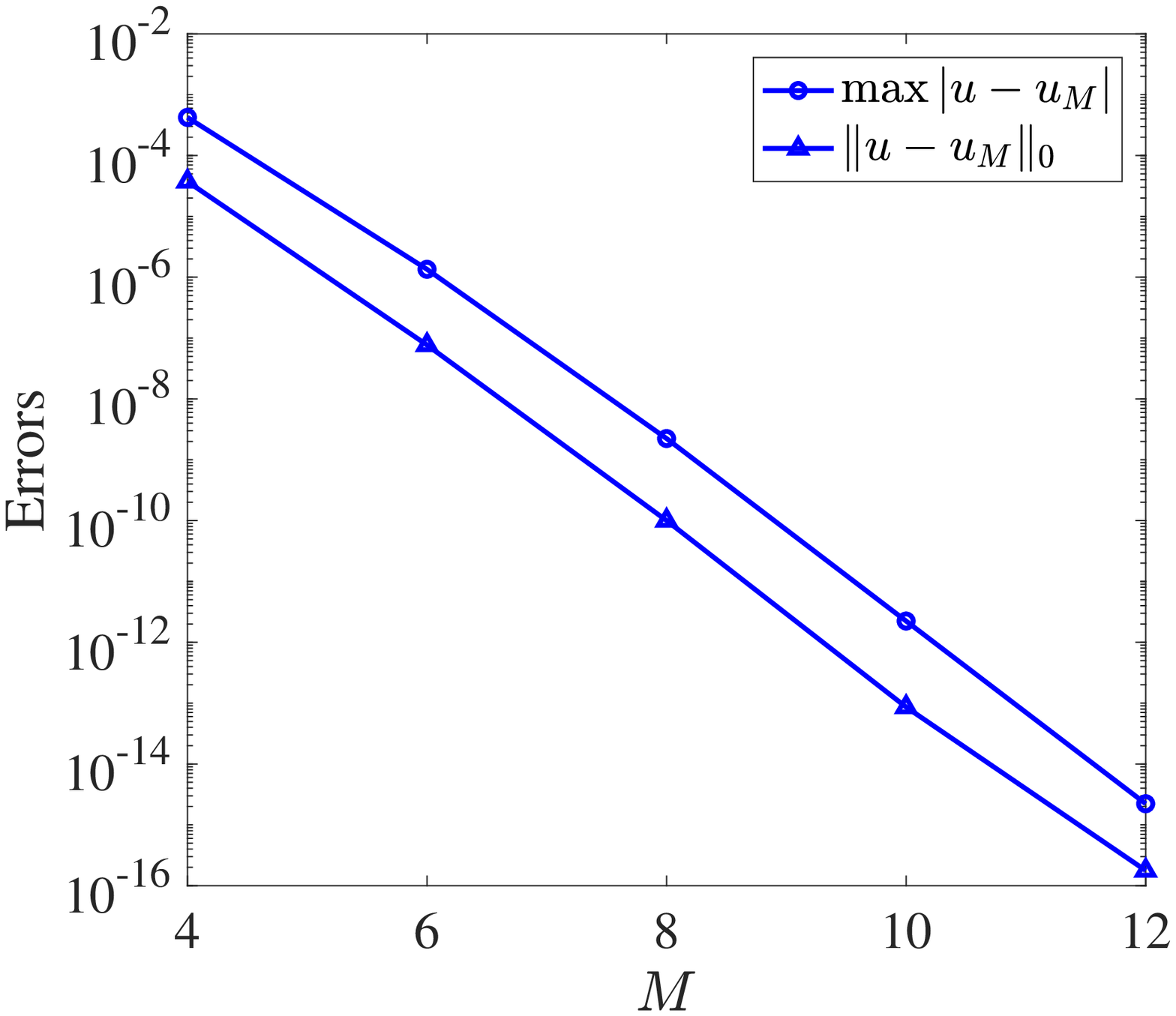}
\end{minipage}%
}%
\centering\vspace{-0.5em}
\caption{Maximum pointwise errors and $L^2$-errors  against $M$ in Example \ref{exam1} (left) and Example \ref{exam2} (right). }\label{exam12_error}
\end{figure}

Owing to the homogeneous Dirichlet boundary, only interior modes of polynomial basis functions are involved in our numerical scheme.
It is observed in the left of Figure \ref{exam12_error} that both maximum pointwise errors and $L^2$-errors of $u-u_{M}$ decay exponentially, which verifies the effectiveness and spectral accuracy of our spectral-Galerkin method.

\begin{example}\label{exam2}
Consider the Poisson equation subject to the non-homogeneous Dirichlet boundary condition:
\begin{equation}\label{exam2_equ}
\begin{cases}
-\Delta u(\bm{x}) = f(\bm{x}),\quad & \bm{x}\in\hat{\TT},\\
u(\bm{x})=g(\bm{x}),\quad &\bm{x}\in\partial\hat{\TT},
\end{cases}
\end{equation}
with the exact solution
$$u(\bs{x}) = (x_1+1)(x_2+1)(x_3+1)e^{1-x_1-x_2-x_3}.$$
\end{example}

It follows from the right of Figure \ref{exam12_error} that the numerical scheme \eqref{scheme_pde} achieves exponential orders of convergence for the second-order model problem with the non-homogeneous Dirichlet boundary condition, which confirms the spectral accuracy on the approximation of the solution along boundaries.

\begin{example}\label{exam3}
Consider the second-order model equation subject to the homogeneous Dirichlet boundary condition:
\begin{equation}\label{exam3_equ}
\begin{cases}
-\Delta u(\bm{x}) +\gamma(\bs{x})u(\bm{x})=f(\bm{x}), \quad &  \bm{x}\in\hat{\TT},\\
 u(\bm{x})=0,\quad & \bm{x}\in\partial\hat{\TT},
\end{cases}
\end{equation} 
with the exact solution defined as in \eqref{u_exact} and a variable coefficient
$$\gamma(\bs{x}) = e^{x_1+x_2+x_3+1}.$$

\end{example}

Exponential orders of convergence of errors  $u-u_M$ in the maximum pointwise errors and $L^2$-errors  are also observed from the semi-log graph in  Figure \ref{exam3_figure}. This reflect the effectiveness of our method for solving equations with  non-homogeneous 
boundary conditions.

\begin{figure}[H]
\centering
\includegraphics[height = 1.9in, width=2.3in]{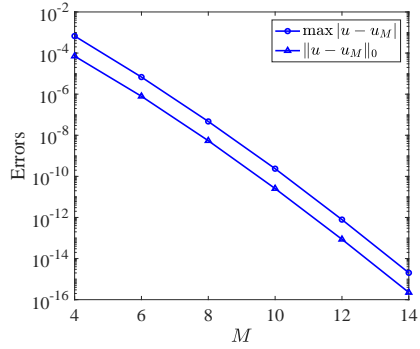}
\centering\vspace{-0.5em}
\caption{Maximum pointwise errors and $L^2$-errors against $M$  in Example \ref{exam3}.}\label{exam3_figure}
\end{figure}

\begin{example}\label{exam4}
Consider the heat equation subject to the homogeneous  Dirichlet boundary condition:
\begin{equation}\label{heat}
\begin{cases}
\partial_t u(\bm{x},t)-\Delta u(\bm{x},t) = f(\bm{x},t),\quad &(\bm{x},t)\in \hat{\TT}\times (0,T],\\
u(\bm{x},t) = 0, \quad &(\bm{x},t)\in\partial\hat{\TT}\times(0,T],\\
u(\bm{x},0) = u_0(\bm{x}),\quad &\bm{x}\in\hat{\TT},
\end{cases}
\end{equation}
with $T=1$ and the exact solution
$$u(\bs{x},t) = \sin \pi x_1 \sin \pi x_2 \sin \pi x_3 \sin \pi (1-x_1-x_2-x_3)e^{-t}.$$
\end{example}

We use the Crank-Nicolson method \cite{CQHZ_spectral_single} to design the fully discretization scheme.
Let 
$$0=t_0 < t_1 < \cdots < t_N = T,\quad N\in\NN,$$
be the discrete partition in time.
We further let  $\Delta t$ be the time-step and $t_n = n\Delta t$ be the $n$-th time-level.
The values of the approximation solution $u_M$ and the right-hand side function $f$ at time-step $n$ are denoted by $u_M^n$ and $f^n,$ respectively.
Combining with \eqref{scheme_pde},
the fully discretization scheme of \eqref{heat} reads: for all $0\le n\le N,$ to find $u_M^n\in X_{M,0}$ such that
\begin{equation}\label{fully_scheme}
\left( \frac{u^{n+1}_M - u^{n}_M}{\Delta t}, v_M\right)_{\hat{\TT}} +  a\left( \frac{u_M^{n+1}+u_M^{n}}{2}, v_M \right)  = 
 \left( \frac{f^{n+1} + f^{n}}{2}, v_M \right)_{\hat{\TT}},\quad \forall v_M\in X_{M,0}.
\end{equation}

\begin{figure}[H]
\centering
\subfigure{
\begin{minipage}[t]{0.3\linewidth}
\centering
\includegraphics[width=1.8in,height=1.6in]{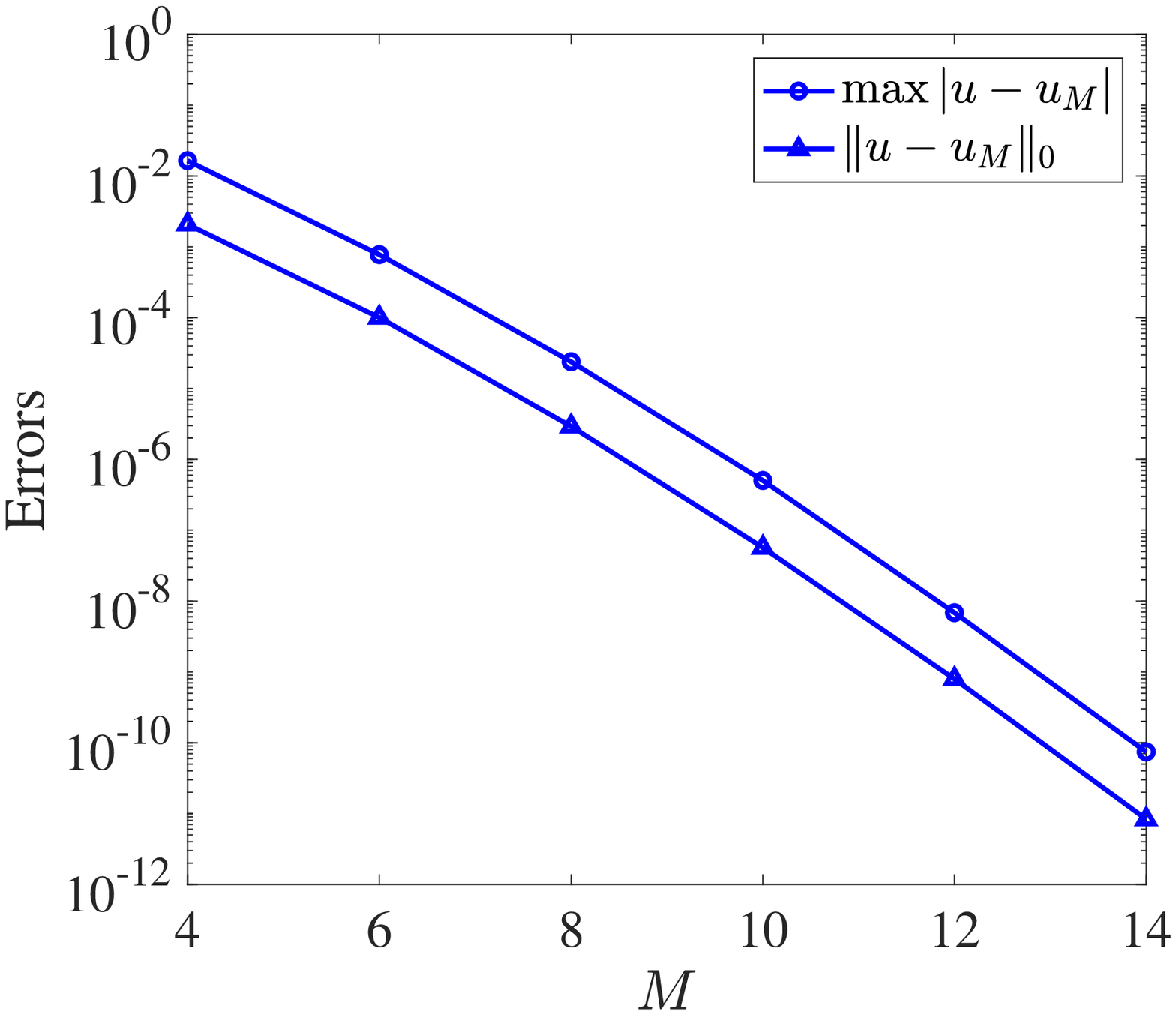}
\end{minipage}%
}%
\subfigure{
\begin{minipage}[t]{0.3\linewidth}
\centering
\includegraphics[width=1.8in,height=1.6in]{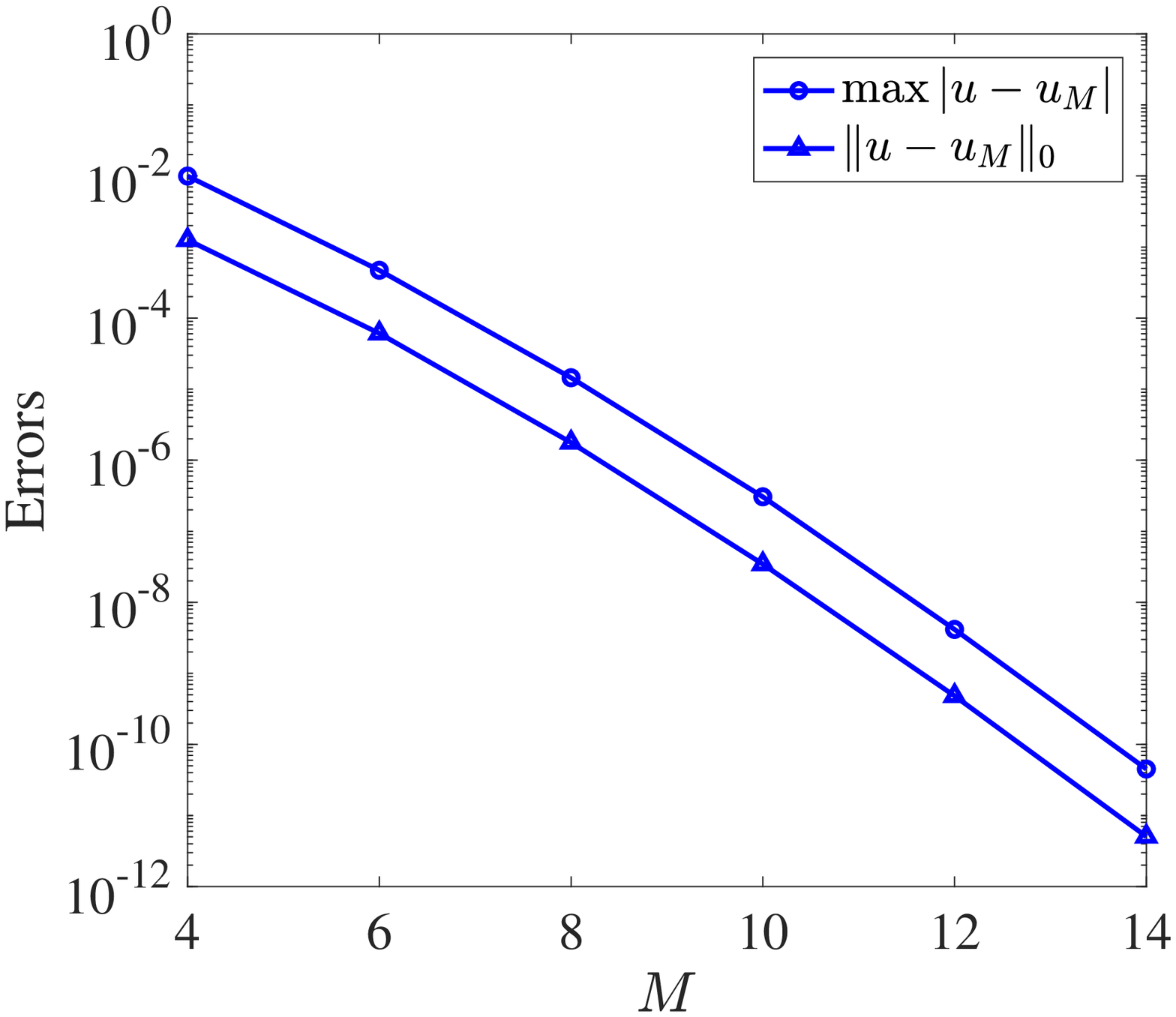}
\end{minipage}%
}%
\subfigure{
\begin{minipage}[t]{0.3\linewidth}
\centering
\includegraphics[width=1.8in,height=1.6in]{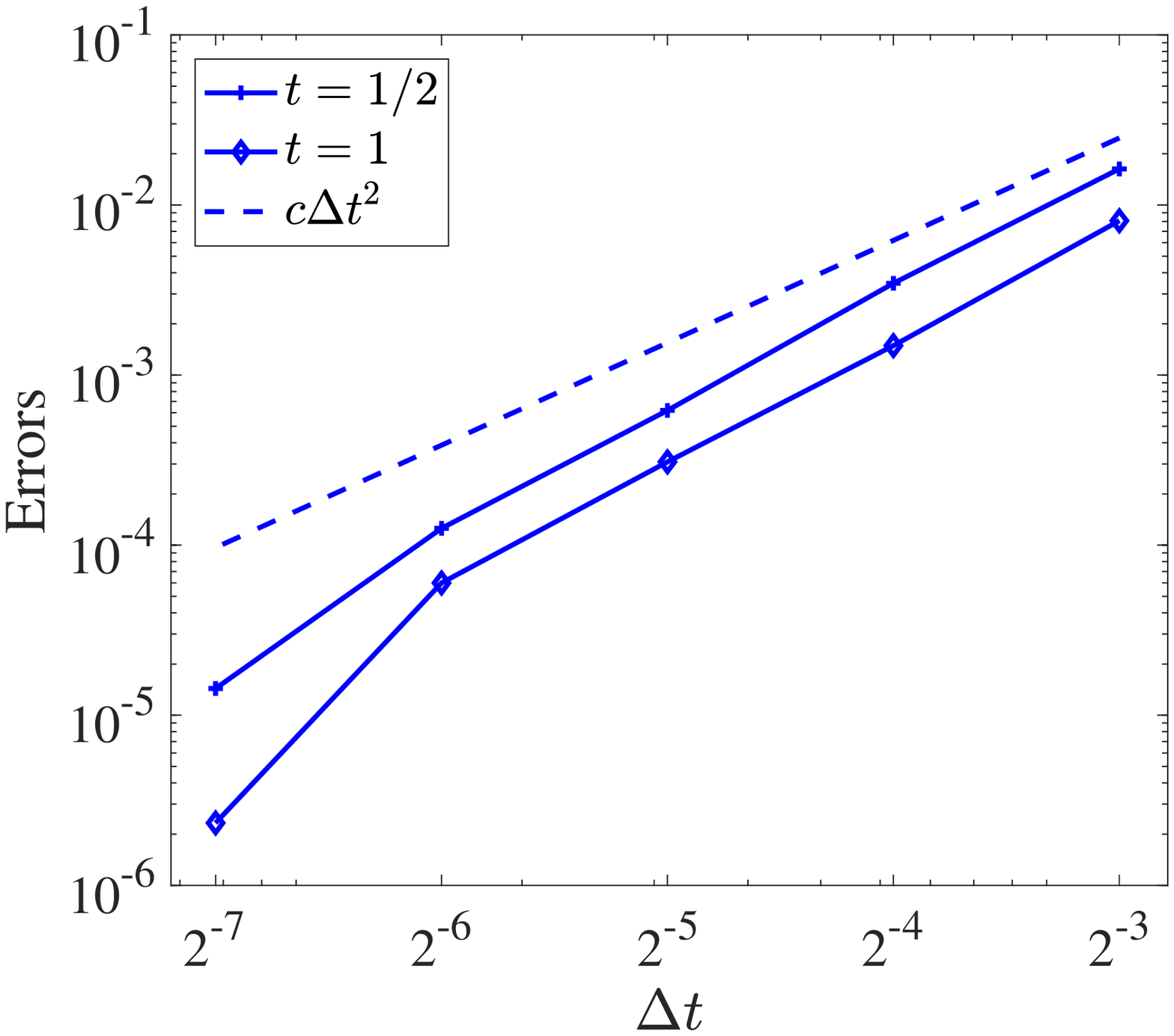}
\end{minipage}%
}%
\centering\vspace{-0.5em}
\caption{Maximum pointwise errors  and $L^2$-errors  against $M$ when  $ t = 1/2$ (left) and $t=1$ (middle); $L^2$-errors against $\Delta t$  (right) in Example \ref{exam4}.
}\label{exam4_error}
\end{figure}

Setting $\Delta t= 2^{-14}$,  
we  first demonstrate errors of $u-u_M$ against $M$ in different time  in Figure \ref{exam4_error}. 
It is reported that the errors decay exponentially both when $t = 1/2$ and $t=1$.
Letting $M=14,$  we also observe from the right of Figure \ref{exam4_error}  that the Crank-Nicolson scheme has second-order convergence in time.


\subsection{Numerical examples for eigenvalue problems}

We report the numerical results for the Laplacian eigenvalue problem \eqref{model_eigen} in this subsection.
Two special tetrahedra would be considered in the following discussions:
the fundamental tetrahedron 
$\TT_F$
with vertices  
$$P^F_0=\left(0,0,0\right)^\tr, \quad P^F_1=\left(0,0,1\right)^\tr, \quad P^F_2=\left(\frac{1}{2},\frac{1}{2},\frac{1}{2}\right)^\tr, \quad P^F_3= \left(-\frac{1}{2},\frac{1}{2},\frac{1}{2}\right)^\tr,$$
and the regular tetrahedron $\TT_R$ with vertices 
$$P^R_0=\left(0,0,\frac{\sqrt{6}}{3}\right)^\tr, \quad P^R_1=\left(\frac{\sqrt{3}}{3},0,0\right)^\tr, 
\quad
 P^R_2=\left(-\frac{\sqrt{3}}{6},\frac{1}{2},0\right)^\tr, \quad
  P^R_3= \left(-\frac{\sqrt{3}}{6},-\frac{1}{2},0\right)^\tr.$$
All eigenvalues of the homogeneous Dirichlet Laplacian can be arranged as 
 $$0<\mu_1 <\mu_2 \le \mu_3 \le \cdots \le \mu_k \le \cdots,\quad k\in \NN.$$

To begin with, we  test absolute errors when approximating the five smallest eigenvalues by numerical scheme \eqref{scheme_eigen} on two tetrahedra. 
For $\TT_F$, the exact eigenvalues are obtained in Appendix \ref{exact_TF}; while for $\TT_R$, the reference eigenvalues are derived with relatively large $M$
by our spectral-Galerkin method.
The semi-log and log-log graphs in Figure \ref{example5_error} reveal that the scheme achieves  exponential orders of convergence  on $\TT_F$ and algebraic orders of convergence on $\TT_R$, respectively.
 It means that the corresponding eigenfunctions on the regular tetrahedron would have singularities, 
which is quite different from the behaviors of eigenfunctions on the regular triangle. 
Indeed, the Laplacian eigenfunctions associated with the first few eigenvalues on the regular triangle
  are analytic  \cite{prager1998, McCartin2003} and the polynomial spectral method achieves exponential orders of convergence when approximating these eigen-solutions \cite{ShanLi2015}. 

\begin{figure}[H]
\centering
\subfigure{
\begin{minipage}[t]{0.5\linewidth}
\centering
\includegraphics[width=2.3in,height=2in]{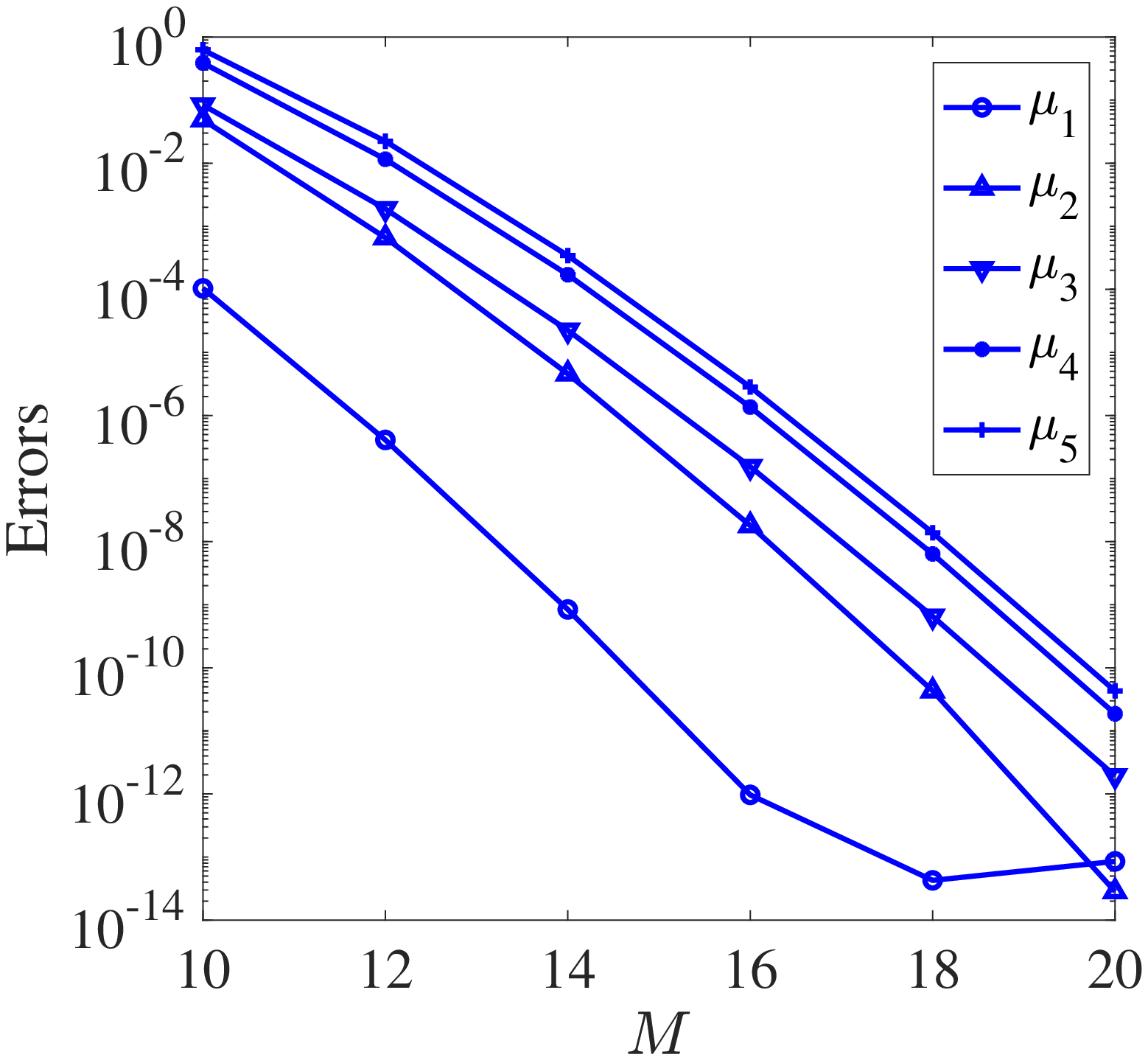}
\end{minipage}%
}%
\subfigure{
\begin{minipage}[t]{0.5\linewidth}
\centering
\includegraphics[width=2.3in,height=2in]{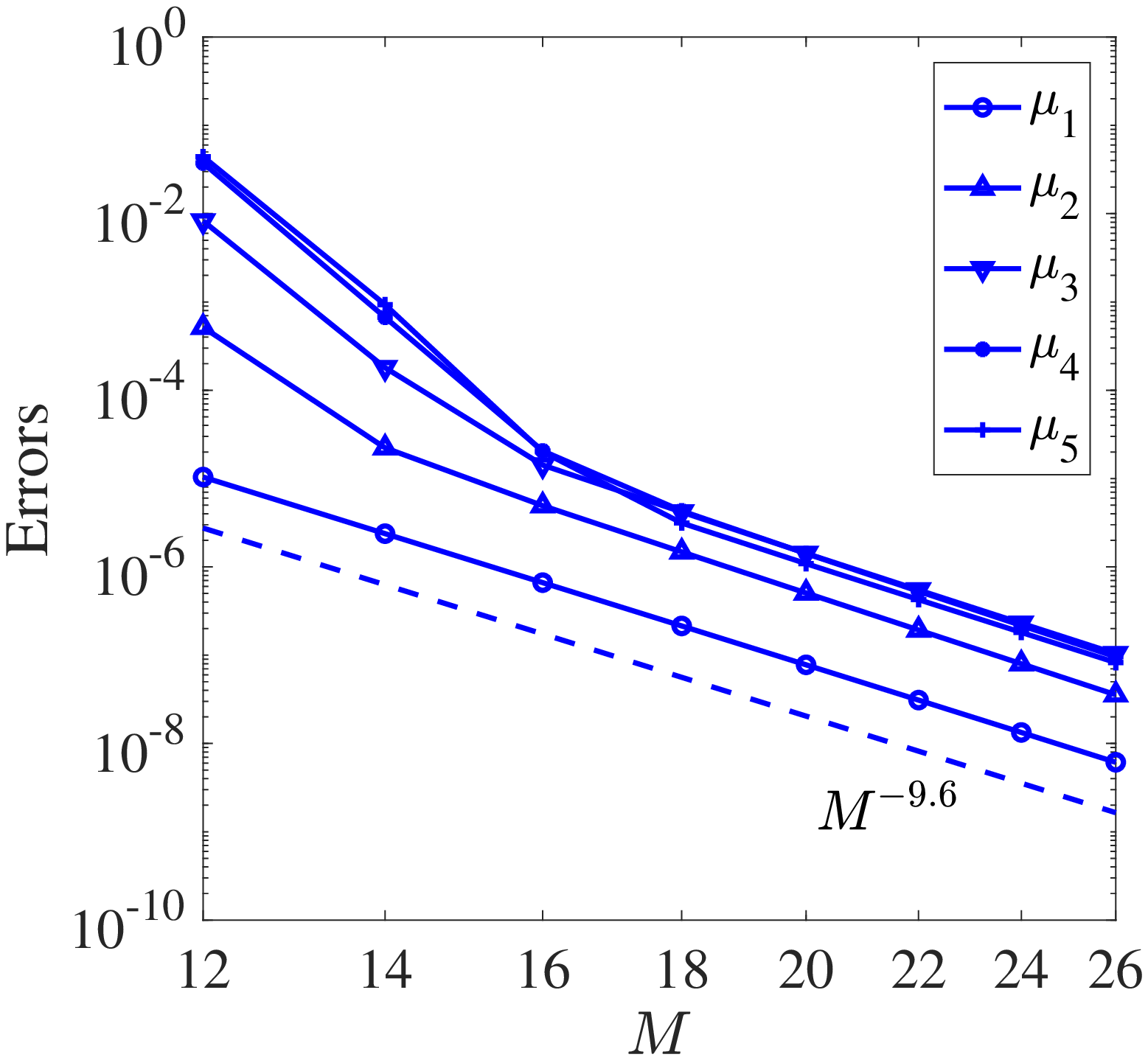}
\end{minipage}%
}%
\centering\vspace{-0.5em}
\caption{Absolute errors of the five smallest numerical Laplacian eigenvalues against $M$ on $\TT_F$ (left) and $\TT_R$ (right).}\label{example5_error}
\end{figure}

We then move on to study the approximations on large eigenvalues.
The Weyl's Conjecture in three dimensions \cite{Weyl1913, WeylV2016} reads that,
\begin{equation}\label{weyl_law}
\mu_k =  \frac{\pi (36\pi)^{\frac{1}{3}}}{ |\TT|^{\frac{2}{3}}}k^{\frac{2}{3}} +\frac{\pi}{2} \left( \frac{3\pi^2}{4}\right)^{\frac{1}{3}} \frac{|\partial \TT|}{ | \TT|^{\frac{4}{3}}} k^{\frac{1}{3}} + o(k^{\frac{1}{3}}) ,\quad k\rightarrow +\infty,
\end{equation}
where $|\TT|$ and $|\partial \TT|$ represent the volume and surface area of $\TT$, respectively.
Thus, the exact eigenvalue $\mu_k $, $k=\mathcal{O}(M^3)$,  grows in $\mathcal{O}(M^2)$ as $M$ tends to $\infty$. However,
 we observe in Figure \ref{example5_largest} that the largest numerical eigenvalue $ \mu_{M,\frac{(M-1)(M-2)(M-3)}{6}}$ evaluated by our spectral-Galerkin method with different $M$  grows almost as asymptotically as $\mathcal{O}(M^4)$.
 It indicates that the polynomial spectral method would bring out  a portion of spurious solutions in deriving large numerical eigenvalues. 
\begin{figure}[H]
\centering
\subfigure{
\begin{minipage}[t]{0.5\linewidth}
\centering
\includegraphics[width=2.2in,height=1.9in]{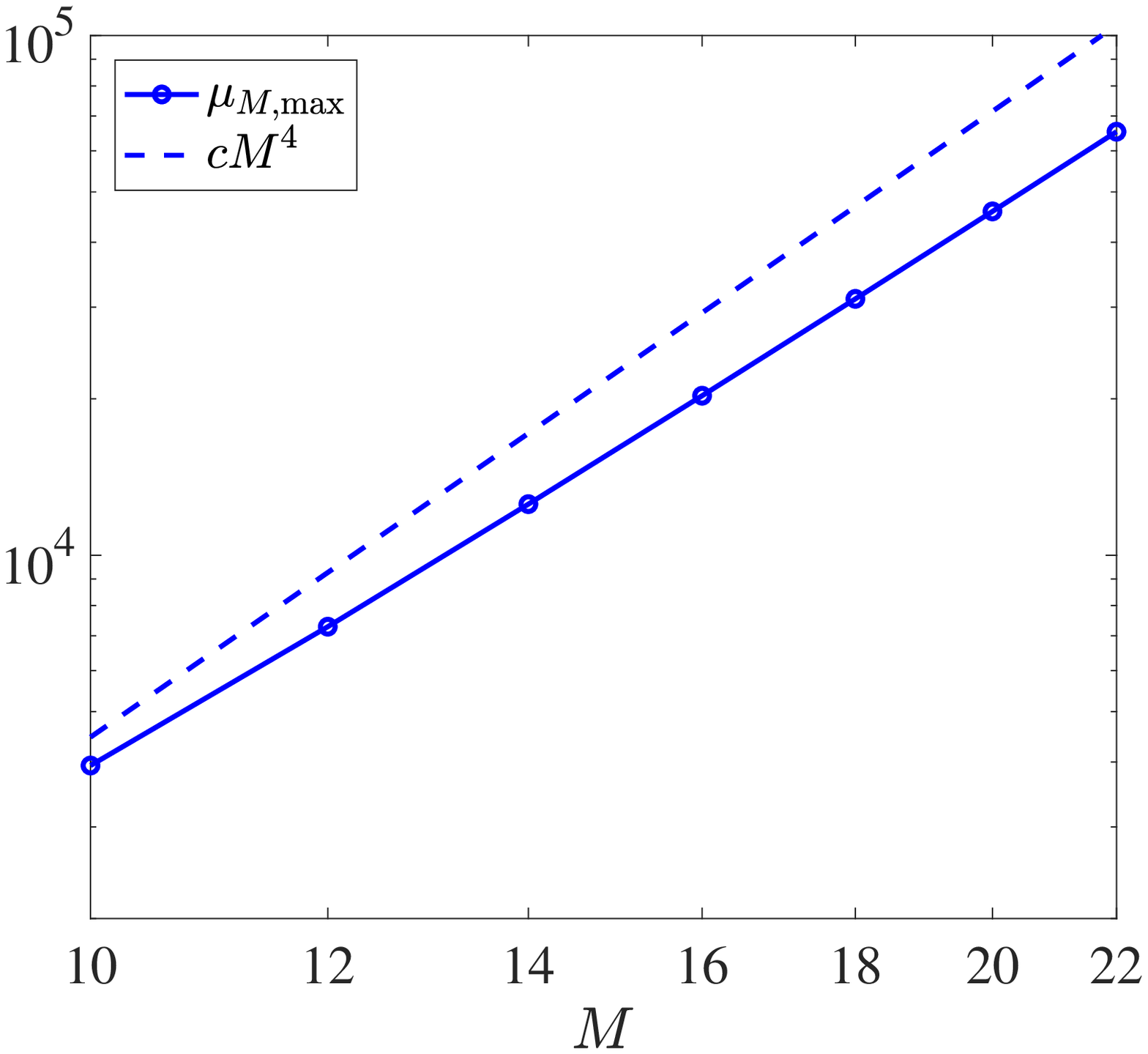}
\end{minipage}%
}%
\subfigure{
\begin{minipage}[t]{0.5\linewidth}
\centering
\includegraphics[width=2.2in,height=1.9in]{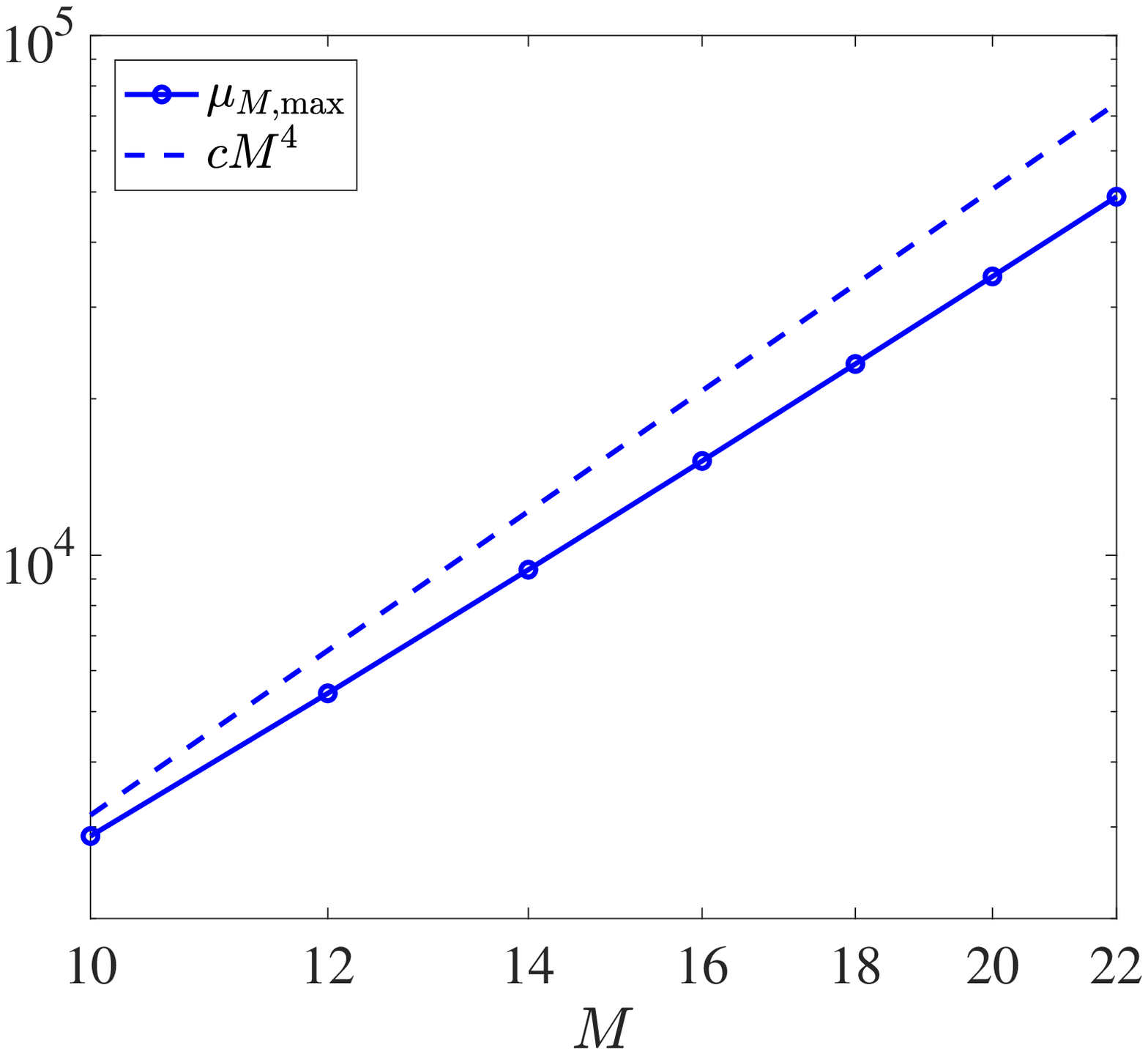}
\end{minipage}%
}%
\centering\vspace{-0.5em}
\caption{The largest numerical Laplacian eigenvalue against $M$  on  $\TT_F$ (left) and $\TT_R$ (right). }\label{example5_largest}
\end{figure}

As a result, one should check how many reliable eigen-solutions that our method is able to provide before examining asymptotic properties of large eigenvalues.
We understand  reliable to mean at least $\mathcal{O}(M^{-1})$ accuracy with polynomial degree $M$.
With exact eigenvalues known in Appendix \ref{exact_TF}, we solve the generalized eigenvalue problem \eqref{scheme_eigen} on $\TT_F$ with different values of $M$ and illustrate their relative errors in the left and the middle of Figure \ref{relative_errors}.

\begin{figure}[H]
\centering
\subfigure{
\begin{minipage}[t]{0.3\linewidth}
\centering
\includegraphics[width=1.8in,height=1.6in]{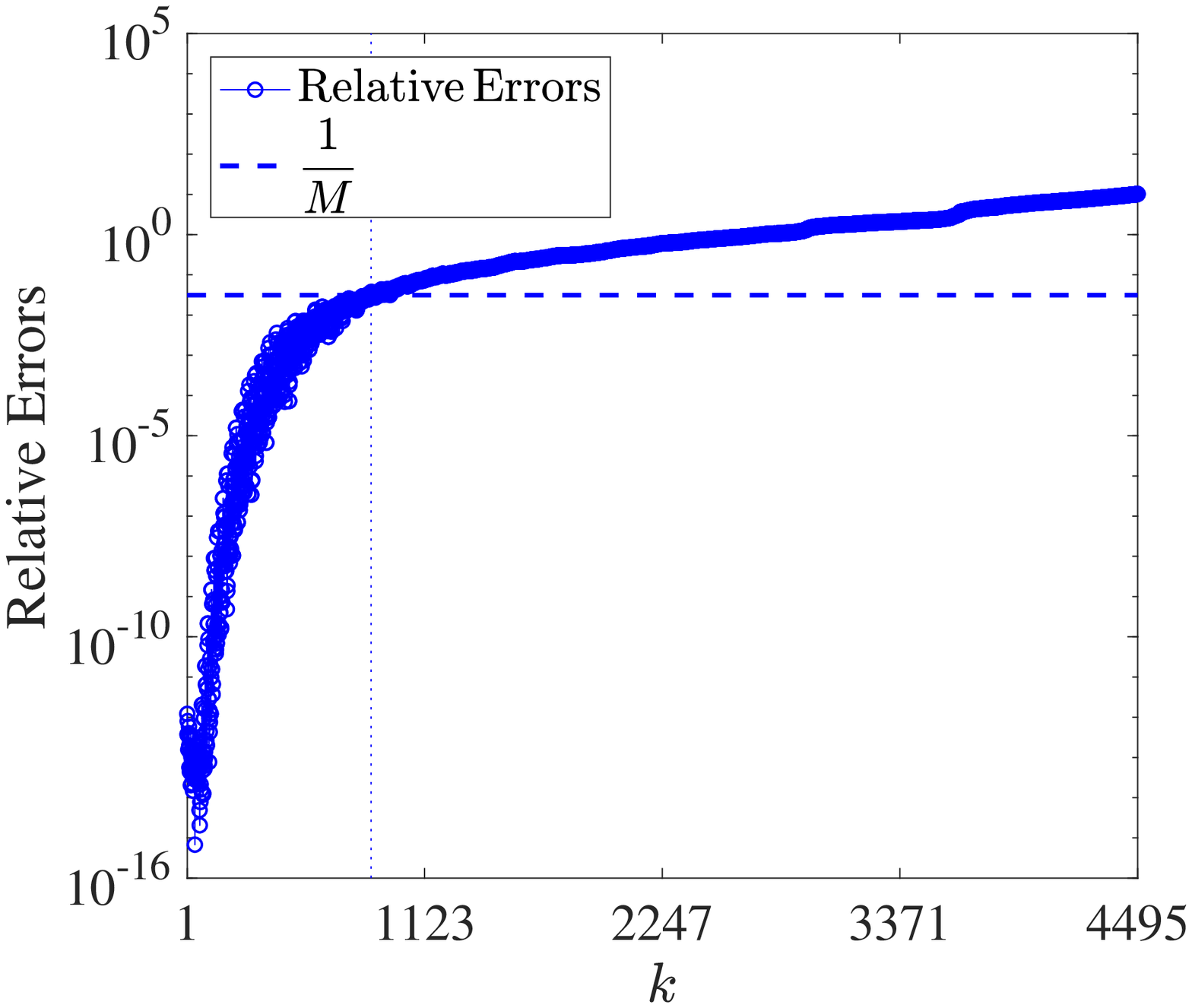}
\end{minipage}%
}%
\subfigure{
\begin{minipage}[t]{0.3\linewidth}
\centering
\includegraphics[width=1.8in,height=1.6in]{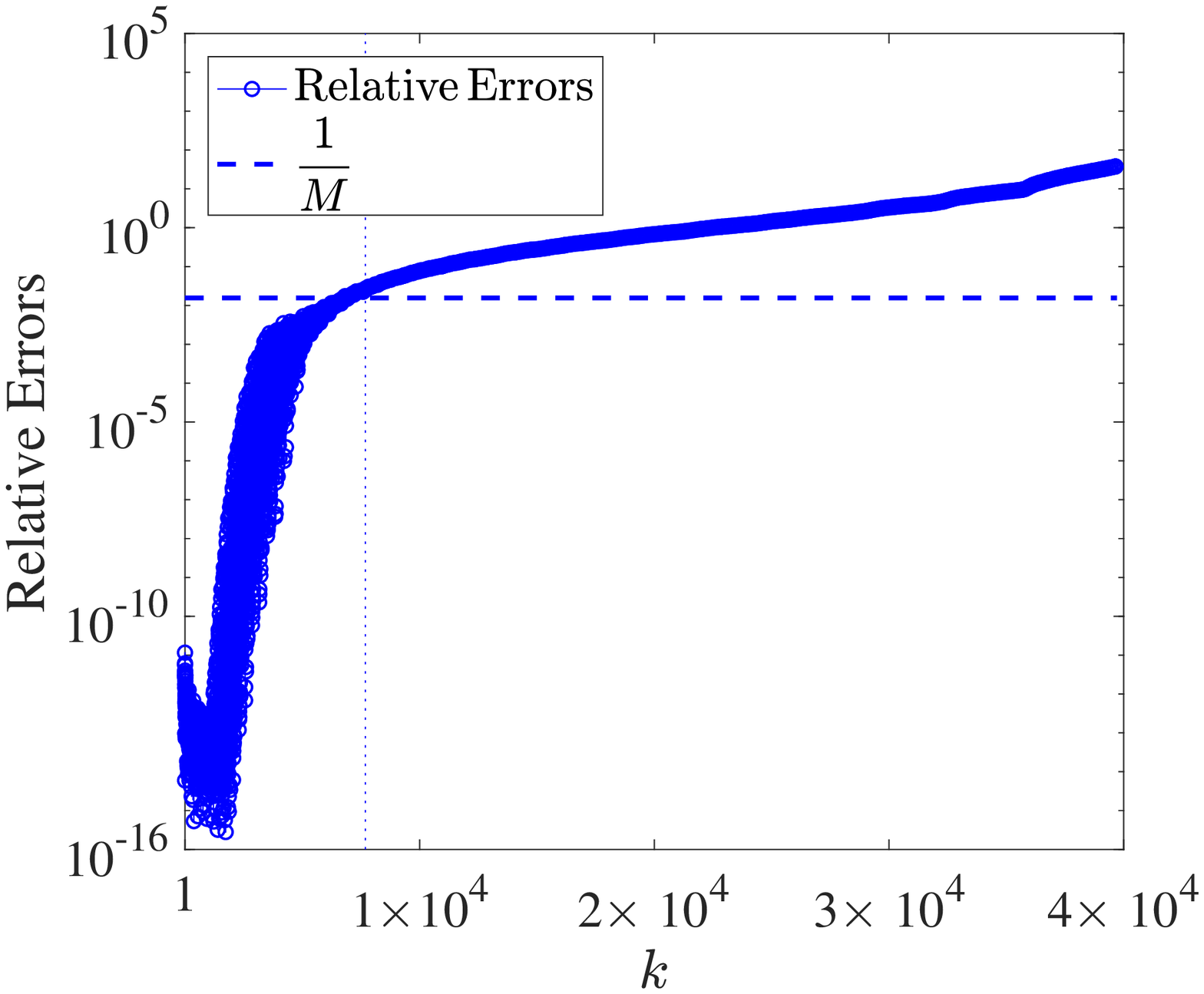}
\end{minipage}%
}%
\subfigure{
\begin{minipage}[t]{0.3\linewidth}
\centering
\includegraphics[width=1.8in,height=1.6in]{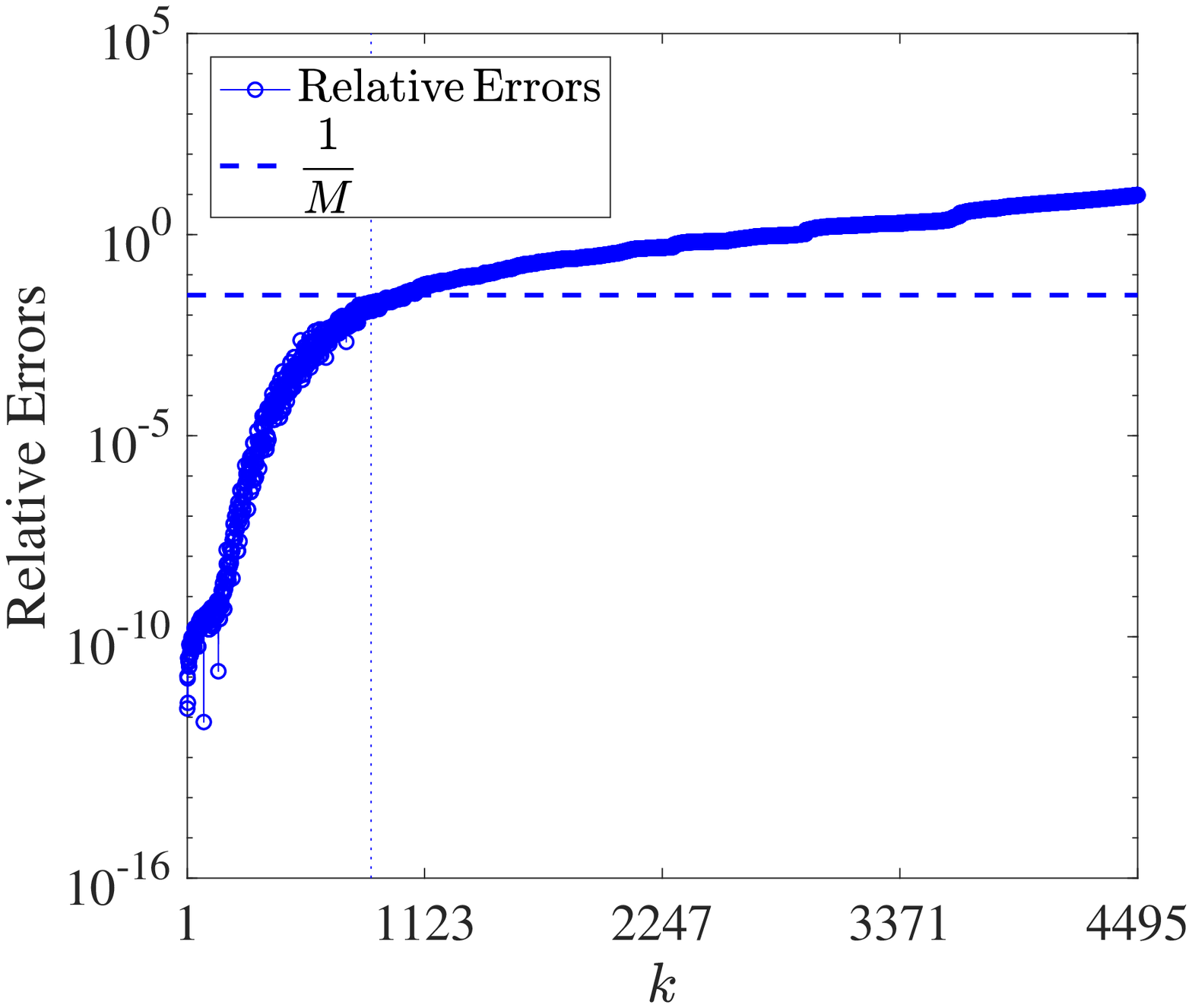}
\end{minipage}%
}%
\centering\vspace{-0.5em}
\caption{Relatives errors of all numerical  eigenvalues on $\TT_F$ when $M = 32$ (left) and $M=64$ (middle); on $\TT_R$ when $M = 32$ (right). The vertical dashed line denotes the portion of $\frac{3}{4}(\frac{2}{\pi})^3.$}\label{relative_errors} 
\end{figure}

 It follows  that there are
about $\frac{3}{4}(\frac{2}{\pi})^3 \approx 19.35\%$ numerical eigenvalues for which relative errors converge at rate $\mathcal{O}(M^{-1})$ for
our spectral-Galerkin method.
Referred by eigenvalues derived with relatively large $M$, we also draw convergence behaviors  of numerical eigenvalues on $\TT_R$ when $M=32$ in the right of Figure \ref{relative_errors}.
Although none of our numerical eigenvalues  can  reach the machine precision in this case,
almost same portion of reliable eigenvalues are observed.

Now, let us  demonstrate  in Figure \ref{example5_weyl} the asymptotic behaviors of the first reliable  3000 numerical eigenvalues of  \eqref{scheme_eigen} 
computed by our spectral method with  $M=64$.
We observe that numerical eigenvalues suit well with the Weyl's conjecture \eqref{weyl_law}.
It, in return, confirms once again the accuracy of these numerical eigenvalues.

\begin{figure}[htb]
\centering
\subfigure{
\begin{minipage}[t]{0.5\linewidth}
\centering
\includegraphics[width=2.2in,height=1.9in]{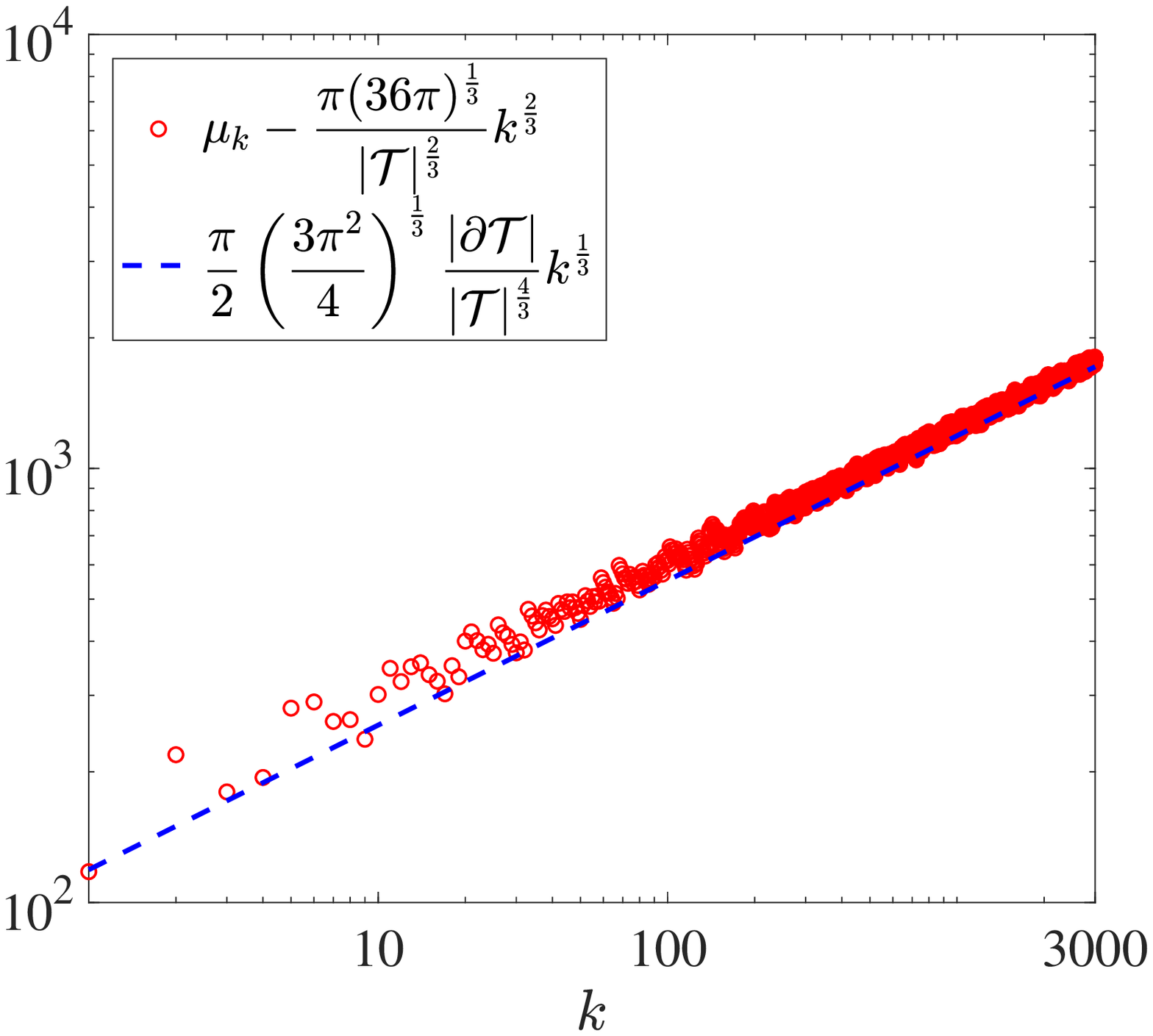}
\end{minipage}%
}%
\subfigure{
\begin{minipage}[t]{0.5\linewidth}
\centering
\includegraphics[width=2.2in,height=1.9in]{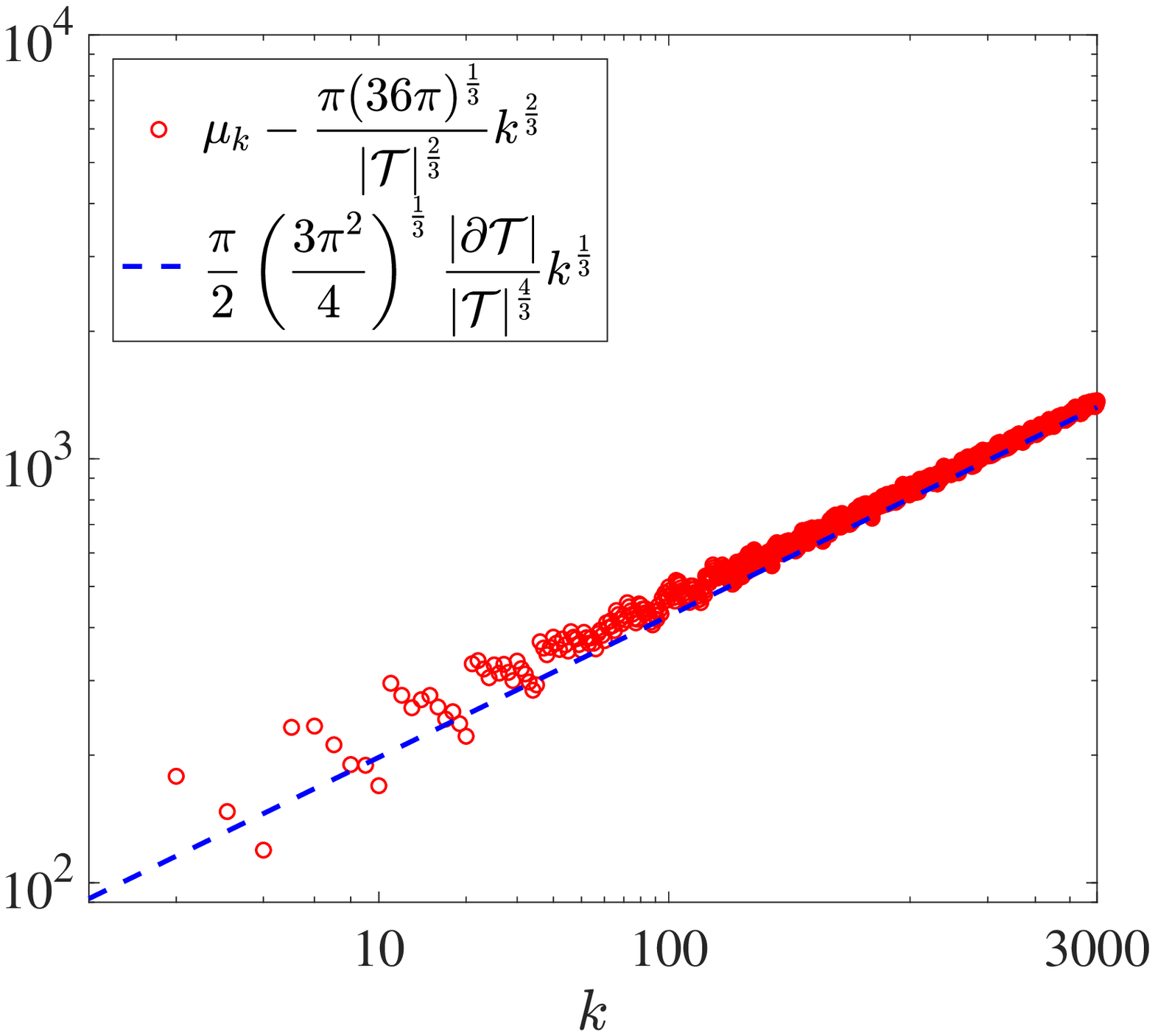}
\end{minipage}%
}%
\centering\vspace{-0.5em}
\caption{Asymptotic behaviors of eigenvalues $\mu_k$ against $k$ on $\TT_F$ (left) and $\TT_R$ (right).}\label{example5_weyl}
\end{figure}

Next,  we turn to explore different gaps of these reliable numerical eigenvalues.
We introduce the following definitions \cite{JMRR1999, Baogaps2020}:

\begin{itemize}


\item the average gaps: \, $\delta_{\rm ave}(k) := \dfrac{1}{k} \sum\limits_{j=1}^k \left( \mu_{j+1}-\mu_j\right) = \dfrac{\mu_{k+1}-\mu_1 }{k},\, k\in \NN;$
\vspace{-0.5em}
\item the normalized gaps: \, $\delta_{\rm norm}(k):= y_{k+1} -y_k,\,$
 $y_k=\left( \mu_k \cdot \frac{ |\TT|^{\frac{2}{3}}}{\pi (36\pi)^{\frac{1}{3}}}\right)^{3/2}$,\, $k\in\NN$.
\end{itemize}
Another interesting term is the level spacing distribution $P(s)$ representing the limiting distribution of the normalized gaps, which is defined by \cite{JMRR1999, Baogaps2020}
\begin{equation*}
\dfrac{ \sharp \{ j\,|1\le j \le k\, |\, \delta_{\rm norm}(j)<x\}}{k} \stackrel{k\rightarrow +\infty}{\longrightarrow} \int_0^x P(s)\, ds,\quad 0\le x <+\infty,
\end{equation*}
where $\sharp S$ denotes the cardinality of the set $S$.

For both $\TT_F$ and $\TT_R$, similar observations are derived from  Figure \ref{3000_gaps} and Figure \ref{regular_gaps}: 
 $\delta_{\rm ave}(k) \sim k^{-\frac{1}{3}}$, which is also a direct consequence of \eqref{weyl_law} and the definition of $\delta_{\rm ave}(k)$;
 statistically, the gaps distribution satisfies  $P(s) = \delta(s)$, where $\delta(s)$ is the Dirac delta function.
 
\begin{figure}[H]
\centering
\subfigure{
\begin{minipage}[t]{0.5\linewidth}
\centering
\includegraphics[width=2.2in,height=1.9in]{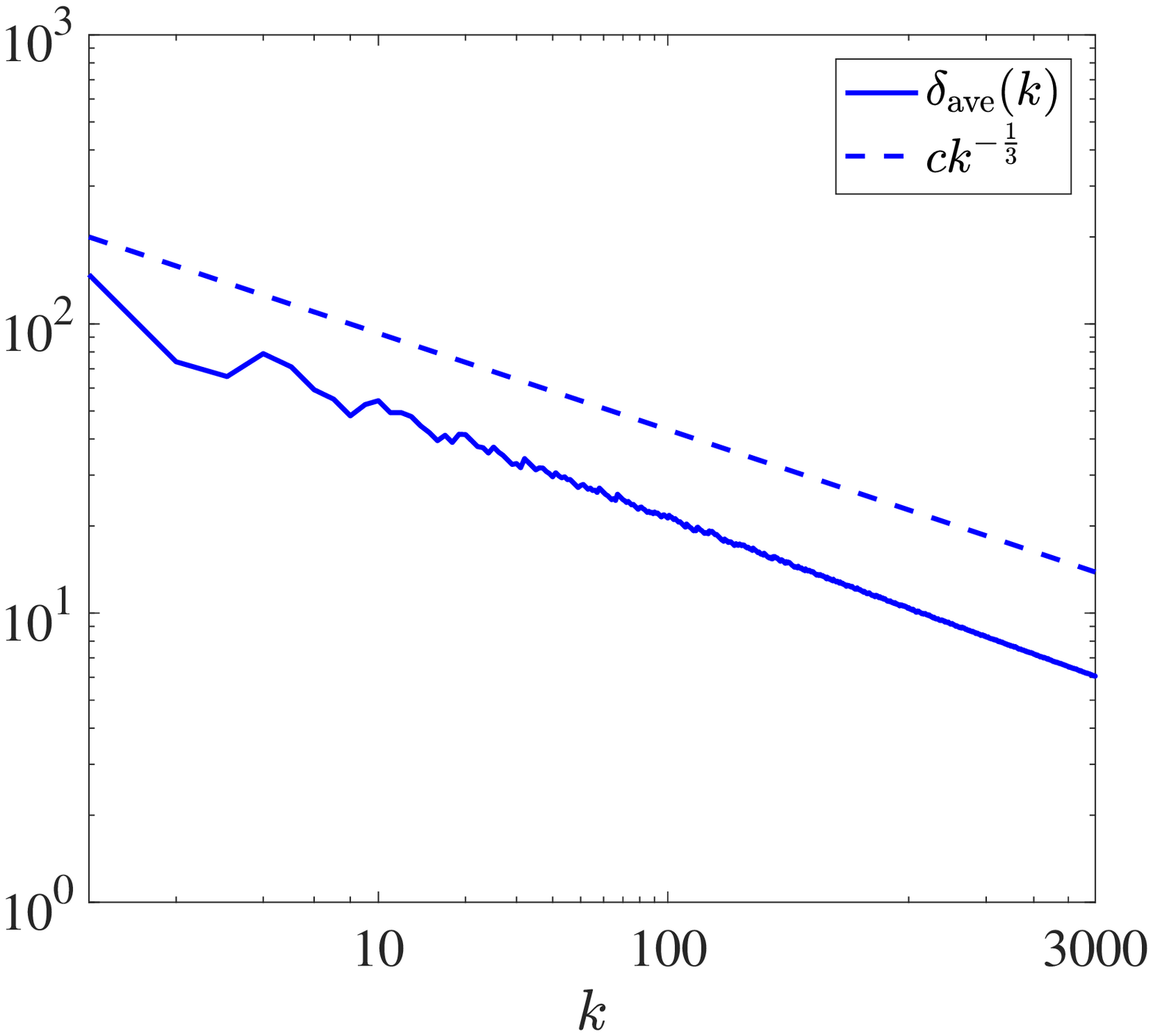}
\end{minipage}%
}%
\subfigure{
\begin{minipage}[t]{0.5\linewidth}
\centering
\includegraphics[width=2.2in,height=1.9in]{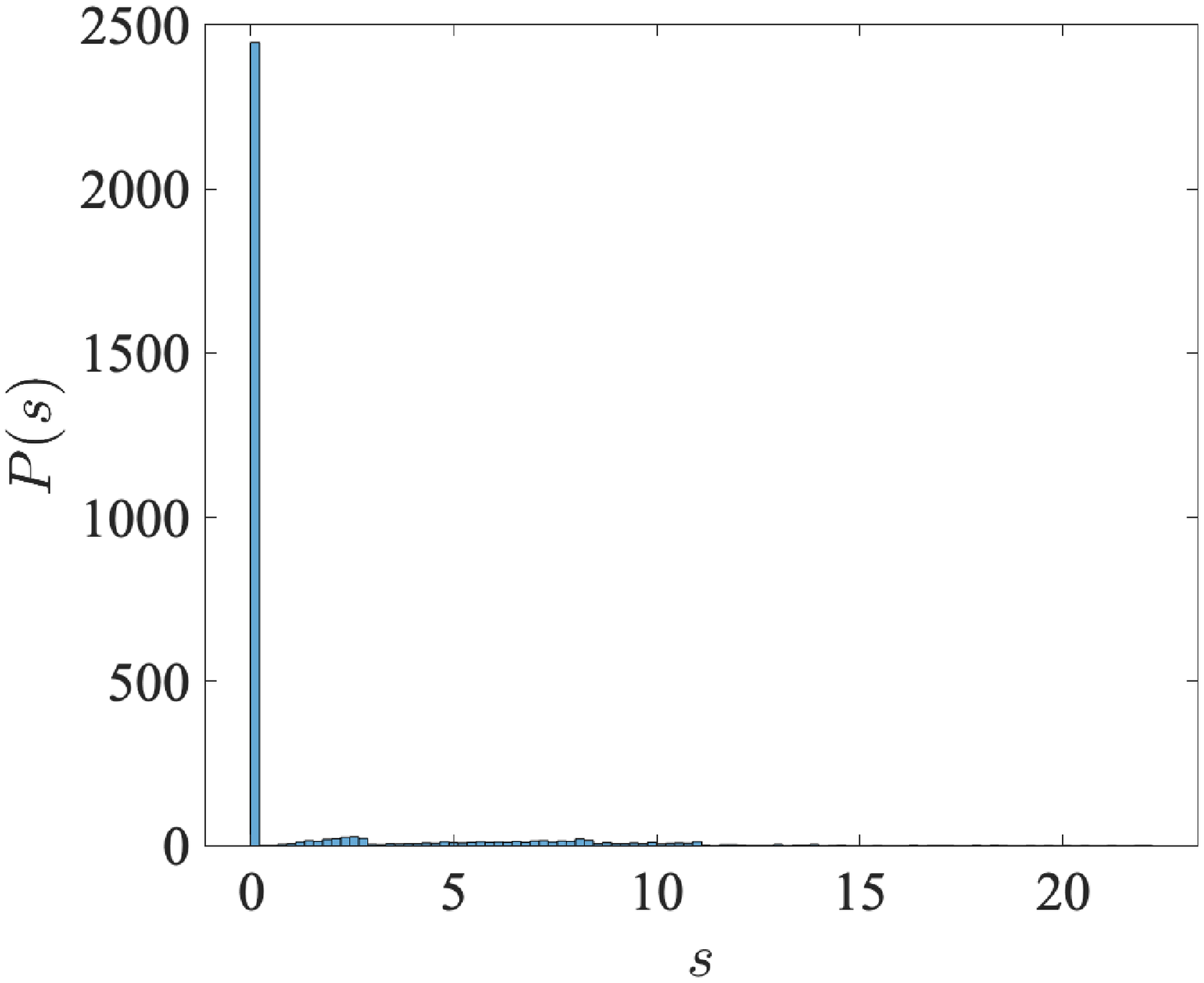}
\end{minipage}%
}%
\centering\vspace{-0.5em}
\caption{The average gaps (left) and the level spacing distribution (right) on $\TT_F$.}\label{3000_gaps}
\end{figure}

\begin{figure}[H]
\centering
\subfigure{
\begin{minipage}[t]{0.5\linewidth}
\centering
\includegraphics[width=2.2in,height=1.9in]{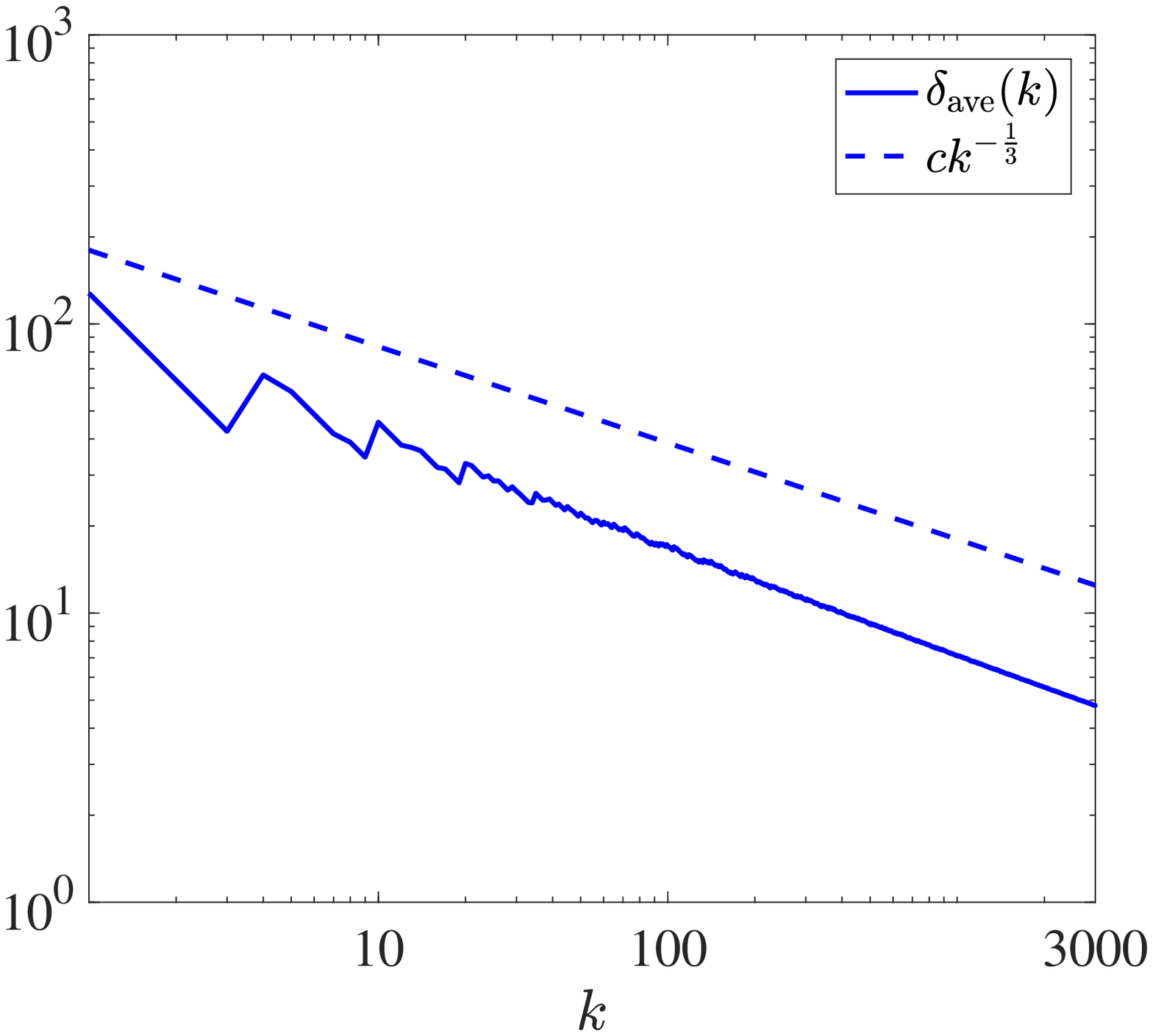}
\end{minipage}%
}%
\subfigure{
\begin{minipage}[t]{0.5\linewidth}
\centering
\includegraphics[width=2.2in,height=1.9in]{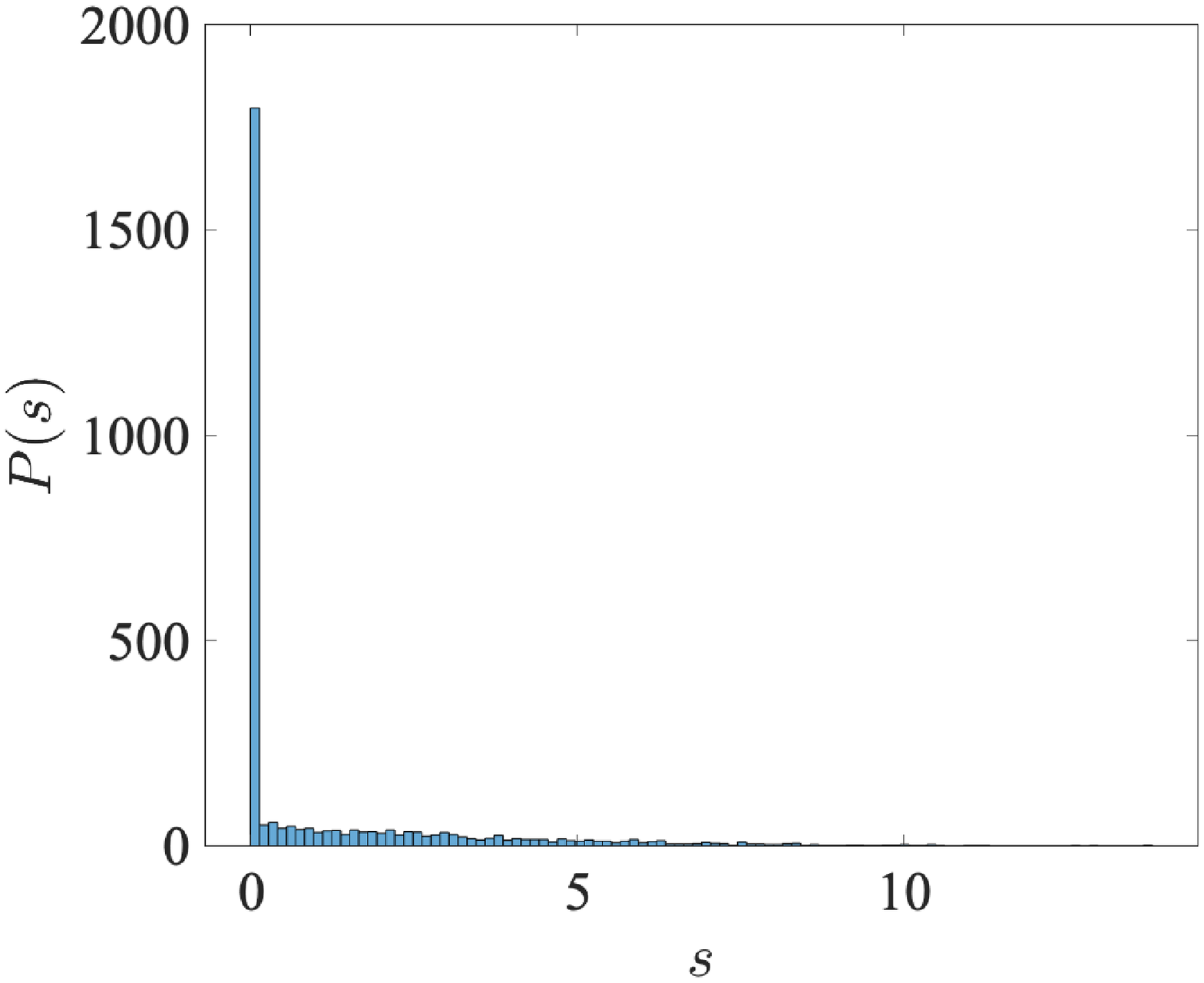}
\end{minipage}%
}%
\centering\vspace{-0.5em}
\caption{The average gaps (left) and the level spacing distribution (right) on $\TT_R$.}\label{regular_gaps}
\end{figure}

\section{Conclusion}\label{conclusion}

 We introduced in this paper a sparse spectral-Galerkin method for second-order partial differential equations on an arbitrary tetrahedron using generalized Koornwinder polynomials.
By exploring various recurrence relations of generalized Koornwinder polynomials, we derive well-conditioned and sparse linear systems which can be efficiently solved.
Numerical results for different kinds of source problems and the Laplacian eigenvalue problem confirm the sparsity, effectiveness 
and spectral accuracy of our method.

With the modal basis functions defined in this paper being applied directly for $C^0$-conforming elements, this work can be instantly  extended to spectral-element methods on tetrahedral meshes for complex geometries.
Theoretical approximation results will also be investigated in a future work.

\begin{appendix}

\section{Recurrence relations for increasing parameters}\label{section_increase}
We derive some useful recurrence relations for generalized Koornwinder polynomials in Appendix \ref{section_increase}-\ref{section_diff}. 
Firstly, we rewrite the Koornwinder polynomials in the collapsed  coordinate  to simplify the incoming  proofs, 
\begin{equation}\label{koornwinder_xi}
\JJ_{\bell}^{\balpha}(\bm{\hx}) = J_{\ell_1}^{\alpha_0,\alpha_1}(\xi)
\left( \frac{1-\eta}{2}\right)^{\ell_1} J_{\ell_2}^{2\ell_1+\alpha_0+\alpha_1+1,\alpha_2}(\eta) 
\left( \frac{1-\zeta}{2}\right)^{\ell_1+\ell_2}
J_{\ell_3}^{2\ell_1+2\ell_2+\alpha_0+\alpha_1+\alpha_2+2,\alpha_3}(\zeta),
\end{equation}
where 
\begin{equation}\label{duffy_3d}
\xi=\dfrac{2\hx_1}{1-\hx_2-\hx_3}-1,\quad
\eta=\dfrac{2\hx_2}{1-\hx_3}-1,\quad
\zeta=2\hx_3-1.
\end{equation}
We also let
\begin{equation*}
\begin{aligned}
&\dot{\bm{e}}_0=(1,0,0,0),\quad \dot{\bm{e}}_1=(0,1,0,0),\quad \dot{\bm{e}}_2=(0,0,1,0),\quad \dot{\bm{e}}_3=(0,0,0,1).
\end{aligned}
\end{equation*}
All coefficient functions in appendixes are defined as in Lemma \ref{lemma_three}-\ref{lemma_differential}.

\begin{lemma}\label{lemma_increase}
For any $\balpha\in[-1,+\infty)^4$ and $\bell\in \NN_0^3$, the following recurrence relations hold:

\begin{align}
&{\JJ}_{\bell}^{\balpha}(\bm{\hx}) = \sum\limits_{p=0}^1  \sum\limits_{q=0}^1 \sum\limits_{r=0}^1 \mathcal{A}_{p,q,r}^1 (\bell,\balpha) {\JJ}_{\bell-\left( p,\, q-p,\, r-q\right)}^{{\balpha}+\dot{\bm{e}}_0} (\bm{\hx}),\label{increase1}\\[-0.3em]
& {\JJ}_{\bell}^{\balpha} (\bm{\hx})= \sum\limits_{p=0}^1  \sum\limits_{q=0}^1 \sum\limits_{r=0}^1 \mathcal{A}_{p,q,r}^2 (\bell,\balpha) {\JJ}_{\bell-\left( p,\, q-p,\, r-q\right)}^{{\balpha}+\dot{\bm{e}}_1} (\bm{\hx}),\label{increase2}\\[-0.3em]
& {\JJ}_{\bell}^{\balpha} (\bm{\hx})= \sum\limits_{q=0}^1 \sum\limits_{r=0}^1 \mathcal{A}_{q,r}^3 (\bell,\balpha) 
{\JJ}_{\bell-\left( 0,\,q,\,r-q\right)}^{{\balpha}+\dot{\bm{e}}_2}(\bm{\hx}),\label{increase3}\\[-0.3em]
& {\JJ}_{\bell}^{\balpha}(\bm{\hx}) = \sum\limits_{r=0}^1 \mathcal{A}_r^4(\bell,\balpha) {\JJ}_{\bell-\left( 0,0,r\right)}^{{\balpha}+\dot{\bm{e}}_3}(\bm{\hx}),\label{increase4}
\end{align}
where the corresponding  coefficients are presented in  Table \ref{increase_table1}.
\begin{table}[H]
\caption{The values of $\mathcal{A}^1_{p,q,r},$ $\mathcal{A}^2_{p,q,r}$, $\mathcal{A}^3_{q,r}$ and $\mathcal{A}^4_{r}$.}
\label{increase_table1}
\centerline{
\begin{tabular}{|c|c|c|c|}
\hline
$(p,q,r)$ & $\mathcal{A}^1_{p,q,r}(\bell,\balpha)$ & $(p,q,r)$& $\mathcal{A}^2_{p,q,r}(\bell,\balpha)$\\ 
\hline
(0,0,0) & $b_{1,\ell_1}^{\alpha_0,\alpha_1}b_{1,\ell_2}^{2\ell_1+|\balpha^1|+1,\alpha_2} b_{1,\ell_3}^{2|\bell^2| +|\balpha^2|+2,\alpha_3} $
&(0,0,0)
&  $b_{1,\ell_1}^{\alpha_0,\alpha_1}b_{1,\ell_2}^{2\ell_1+|\balpha^1|+1,\alpha_2} b_{1,\ell_3}^{2|\bell^2| +|\balpha^2|+2,\alpha_3} $\\
\hline
(0,0,1) & $ b_{1,\ell_1}^{\alpha_0,\alpha_1}b_{1,\ell_2}^{2\ell_1+|\balpha^1|+1,\alpha_2} b_{2,\ell_3}^{2|\bell^2| +|\balpha^2|+2,\alpha_3} $   
&(0,0,1) 
& $ b_{1,\ell_1}^{\alpha_0,\alpha_1}b_{1,\ell_2}^{2\ell_1+|\balpha^1|+1,\alpha_2} b_{2,\ell_3}^{2|\bell^2| +|\balpha^2|+2,\alpha_3} $ \\
\hline
(0,1,0) & $b_{1,\ell_1}^{\alpha_0,\alpha_1}b_{2,\ell_2}^{2\ell_1+|\balpha^1|+1,\alpha_2} e_{2,\ell_3}^{2|\bell^2| +|\balpha^2|+1,\alpha_3}  $ 
& (0,1,0) & $b_{1,\ell_1}^{\alpha_0,\alpha_1}b_{2,\ell_2}^{2\ell_1+|\balpha^1|+1,\alpha_2} e_{2,\ell_3}^{2|\bell^2| +|\balpha^2|+1,\alpha_3}  $\\
\hline
(0,1,1) & $b_{1,\ell_1}^{\alpha_0,\alpha_1}b_{2,\ell_2}^{2\ell_1+|\balpha^1|+1,\alpha_2} e_{1,\ell_3}^{2|\bell^2| +|\balpha^2|+1,\alpha_3}  $ 
&(0,1,1)
& $b_{1,\ell_1}^{\alpha_0,\alpha_1}b_{2,\ell_2}^{2\ell_1+|\balpha^1|+1,\alpha_2} e_{1,\ell_3}^{2|\bell^2| +|\balpha^2|+1,\alpha_3}  $\\
\hline
(1,0,0) & $b_{2,\ell_1}^{\alpha_0,\alpha_1} e_{2,\ell_2}^{2\ell_1+|\balpha^1| ,\alpha_2} b_{1,\ell_3}^{2|\bell^2| +|\balpha^2|+2,\alpha_3} $  &(1,0,0) & $-b_{2,\ell_1}^{\alpha_1,\alpha_0} e_{2,\ell_2}^{2\ell_1+|\balpha^1| ,\alpha_2} b_{1,\ell_3}^{2|\bell^2| +|\balpha^2|+2,\alpha_3} $\\
\hline
(1,0,1) & $b_{2,\ell_1}^{\alpha_0,\alpha_1} e_{2,\ell_2}^{2\ell_1+|\balpha^1| ,\alpha_2} b_{2,\ell_3}^{2|\bell^2| +|\balpha^2|+2,\alpha_3} $  &(1,0,1) & $-b_{2,\ell_1}^{\alpha_1,\alpha_0} e_{2,\ell_2}^{2\ell_1+|\balpha^1| ,\alpha_2} b_{2,\ell_3}^{2|\bell^2| +|\balpha^2|+2,\alpha_3} $\\
\hline
(1,1,0) & $ b_{2,\ell_1}^{\alpha_0,\alpha_1} e_{1,\ell_2}^{2\ell_1+|\balpha^1| ,\alpha_2} e_{2,\ell_3}^{2|\bell^2| +|\balpha^2|+1,\alpha_3} $ &(1,1,0) & $ -b_{2,\ell_1}^{\alpha_1,\alpha_0} e_{1,\ell_2}^{2\ell_1+|\balpha^1| ,\alpha_2} e_{2,\ell_3}^{2|\bell^2| +|\balpha^2|+1,\alpha_3} $\\
\hline
(1,1,1)& $   b_{2,\ell_1}^{\alpha_0,\alpha_1} e_{1,\ell_2}^{2\ell_1+|\balpha^1| ,\alpha_2} e_{1,\ell_3}^{2|\bell^2| +|\balpha^2|+1,\alpha_3}  $ &(1,1,1) & $   -b_{2,\ell_1}^{\alpha_1,\alpha_0} e_{1,\ell_2}^{2\ell_1+|\balpha^1| ,\alpha_2} e_{1,\ell_3}^{2|\bell^2| +|\balpha^2|+1,\alpha_3}  $ \\
\hline\hline
$(q,r)$ & $\mathcal{A}^3_{q,r}(\bell,\balpha)$ & $(q,r)$& $\mathcal{A}^3_{q,r}(\bell,\balpha)$\\ 
\hline
(0,0)&$ b_{1,\ell_2}^{2\ell_1+|\balpha^1|+1,\alpha_2} b_{1,\ell_3}^{2|\bell^2|+|\balpha^2|+2,\alpha_3} $
& (1,0)
& $ -b_{2,\ell_2}^{\alpha_2,2\ell_1+|\balpha^1|+1} e_{2,\ell_3}^{2|\bell^2|+|\balpha^2|+1,\alpha_3} $\\
\hline
(0,1) & $ b_{1,\ell_2}^{2\ell_1+|\balpha^1|+1,\alpha_2} b_{2,\ell_3}^{2|\bell^2|+|\balpha^2|+2,\alpha_3} $
 & (1,1)
 & $ -b_{2,\ell_2}^{\alpha_2,2\ell_1+|\balpha^1|+1} e_{1,\ell_3}^{2|\bell^2|+|\balpha^2|+1,\alpha_3} $\\
\hline\hline
$r$ & $\mathcal{A}^4_{r}(\bell,\balpha)$ & $r$& $\mathcal{A}^4_{r}(\bell,\balpha)$\\ 
\hline
0 &
 $b_{1,\ell_3}^{2|\bell^2|+|\balpha^2|+2,\alpha_3}$ &1 & $-b_{2,\ell_3}^{\alpha_3, 2|\bell^2|+|\balpha^2|+2}$\\
\hline
\end{tabular}}
\end{table}
\end{lemma}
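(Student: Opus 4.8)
The plan is to work throughout in the collapsed coordinates $(\xi,\eta,\zeta)$ of \eqref{duffy_3d}, where, by \eqref{koornwinder_xi}, the generalized Koornwinder polynomial $\JJ_{\bell}^{\balpha}$ is the product of three one-dimensional generalized Jacobi polynomials in $\xi$, $\eta$, $\zeta$ respectively, times the weight factors $\big(\tfrac{1-\eta}{2}\big)^{\ell_1}$ and $\big(\tfrac{1-\zeta}{2}\big)^{\ell_1+\ell_2}$. Raising one entry $\alpha_j$ of $\balpha$ by one only disturbs the Jacobi factor that carries $\alpha_j$ as an index, together with all later factors, whose ``internal'' indices $2\ell_1+|\balpha^1|+1$ and $2|\bell^2|+|\balpha^2|+2$ and the two weight powers then need to be readjusted. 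The proof is therefore a short cascade: raise the index in the first affected factor by the appropriate identity of Lemma \ref{lemma_increasing}, then propagate the resulting mismatch one factor at a time, switching to Lemma \ref{lemma_decrease} whenever a spare weight factor $\tfrac{1-\eta}{2}$ or $\tfrac{1-\zeta}{2}$ has to be absorbed.

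Concretely, for \eqref{increase1} I would first apply \eqref{LemEQ2(1)} to the $\xi$-factor, writing $J_{\ell_1}^{\alpha_0,\alpha_1}(\xi)=b_{1,\ell_1}^{\alpha_0,\alpha_1}J_{\ell_1}^{\alpha_0+1,\alpha_1}(\xi)+b_{2,\ell_1}^{\alpha_0,\alpha_1}J_{\ell_1-1}^{\alpha_0+1,\alpha_1}(\xi)$, the two terms being labelled by $p=0,1$. The target polynomial $\JJ_{\bell-(p,\,q-p,\,r-q)}^{\balpha+\dot{\bm e}_0}$ needs the $\eta$-factor to have first index $2(\ell_1-p)+|\balpha^1|+2$ and weight power $\ell_1-p$: when $p=0$ this is \eqref{LemEQ2(1)} once more (the weight power being unchanged), while when $p=1$ one spare factor $\tfrac{1-\eta}{2}$ appears and the first index must instead be \emph{lowered}, which is \eqref{LemEQ3(1)} (and which raises the $\eta$-degree by one in its second term). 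Either way the $\eta$-step splits into two terms labelled by $q$, with $\ell_2\mapsto\ell_2-q+p$. The same dichotomy then occurs at the $\zeta$-factor: its target weight power is $(\ell_1-p)+(\ell_2-q+p)=|\bell^2|-q$, so for $q=0$ one uses \eqref{LemEQ2(1)} and for $q=1$ one absorbs a spare $\tfrac{1-\zeta}{2}$ via \eqref{LemEQ3(1)}, producing the label $r$ with $\ell_3\mapsto\ell_3-r+q$. Multiplying the eight products of $b$- and $e$-coefficients so obtained reproduces exactly the table of $\mathcal{A}^1_{p,q,r}$.

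The remaining three identities follow the same pattern with the obvious modifications. For \eqref{increase2} the $\xi$-factor is expanded by \eqref{LemEQ2(2)} instead (raising the \emph{second} index), which introduces the minus sign and the transposed coefficient $b_{2,\ell_1}^{\alpha_1,\alpha_0}$ in the $p=1$ entries; since the later indices depend on $\alpha_0,\alpha_1$ only through $|\balpha^1|$, the $\eta$- and $\zeta$-cascades are verbatim those of \eqref{increase1}. For \eqref{increase3} the $\xi$-factor is untouched (so $p$ disappears and no spare $\tfrac{1-\eta}{2}$ arises), the $\eta$-factor is expanded by \eqref{LemEQ2(2)} giving the label $q$, and the $\zeta$-cascade is as before (using \eqref{LemEQ2(1)} when $q=0$ and \eqref{LemEQ3(1)} when $q=1$), giving $r$. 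For \eqref{increase4} only the $\zeta$-factor is affected, a single application of \eqref{LemEQ2(2)} yields the two terms labelled by $r$, and no cascade is needed. Throughout, the convention $J^{\alpha,\beta}_k=b^{\alpha,\beta}_{1,k}=b^{\alpha,\beta}_{2,k}\equiv0$ for $k<0$ keeps the formulas meaningful when some $\ell_i$ is small.

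The routine part is the arithmetic of matching the shifted indices and checking that each cascade terminates in the tabulated coefficient. The one genuinely delicate point I expect to be the main obstacle is the bookkeeping of the interplay between lowering a degree $\ell_i$ and the resulting extra weight factor $\tfrac{1-\eta}{2}$ or $\tfrac{1-\zeta}{2}$ — this is precisely what forces the switch from the raising relation \eqref{LemEQ2(1)} to the lowering relation \eqref{LemEQ3(1)} in half of the cases — together with verifying that the low-degree exceptional values ($k=0,1$) of the $b$- and $e$-coefficients in Lemmas \ref{lemma_increasing} and \ref{lemma_decrease} still give the stated identities when entries of $\balpha$ equal $-1$, so that \eqref{increase1}--\eqref{increase4} hold on the whole range $\balpha\in[-1,+\infty)^4$.
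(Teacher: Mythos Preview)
Your proposal is correct and follows essentially the same approach as the paper: the paper also works in the collapsed coordinates \eqref{koornwinder_xi}, proves only \eqref{increase3} as a representative case, and does so by applying \eqref{LemEQ2(2)} to the $\eta$-factor and then cascading to the $\zeta$-factor with \eqref{LemEQ2(1)} on the $q=0$ branch and \eqref{LemEQ3(1)} on the $q=1$ branch, exactly as you describe. Your account is in fact more complete, since you spell out the mechanism for all four identities and correctly identify the switch between the raising relation \eqref{LemEQ2(1)} and the weight-absorbing relation \eqref{LemEQ3(1)} as the key bookkeeping step.
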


\begin{proof}
We take the proof of \eqref{increase3} as an example.
Other identities shall be proved in a similar way.
According to \eqref{LemEQ2(2)}, \eqref{LemEQ2(1)} and \eqref{LemEQ3(1)}, one has
{\small
\begin{equation*}
\begin{aligned}
\JJ_{\bell}^{\balpha}(\bm{\hx})&=J_{\ell_1}^{\alpha_0,\alpha_1}(\xi)
\left( \frac{1-\eta}{2}\right)^{\ell_1}  \bigg[ b_{1,\ell_2}^{2\ell_1+|\balpha^1|+1,\alpha_2} J_{\ell_2}^{2\ell_1+|\balpha^1|+1,\alpha_2+1}(\eta) \left( \frac{1-\zeta}{2}\right)^{\ell_1+\ell_2} \\
&\qquad\times
\left(
 b_{1,\ell_3}^{2|\bell^2|+|\balpha^2|+2,\alpha_3} J_{\ell_3}^{2|\bell^2| +|\balpha^2|+3,\alpha_3}(\zeta)+
b_{2,\ell_3}^{2|\bell^2|+|\balpha^2|+2,\alpha_3} J_{\ell_3-1}^{2|\bell^2| +|\balpha^2|+3,\alpha_3}(\zeta)\right)\\
&\qquad\qquad 
-b_{2,\ell_2}^{\alpha_2,2\ell_1+|\balpha^1|+1} J_{\ell_2-1}^{2\ell_1+|\balpha^1|+1,\alpha_2+1}(\eta) \left( \frac{1-\zeta}{2}\right)^{\ell_1+\ell_2-1} \\
&\qquad\times\left(
e_{1,\ell_3}^{2|\bell^2|+|\balpha^2|+1,\alpha_3} J_{\ell_3}^{2|\bell^2| +|\balpha^2|+1,\alpha_3}(\zeta)+
e_{2,\ell_3}^{2|\bell^2|+|\balpha^2|+1,\alpha_3} J_{\ell_3+1}^{2|\bell^2| +|\balpha^2|+1,\alpha_3}(\zeta)
\right) \bigg].
\end{aligned}
\end{equation*}
}
This completes the proof.
\end{proof}

\section{Recurrence relations for derivatives}\label{section_diff}

\begin{lemma}\label{lemma_diff}
For any $\balpha\in[-1,+\infty)^4$ and $\bell\in\NN_0^3$, the following recurrence relations hold:

\begin{align}
&\partial_{\hx_1}{\JJ}_{\bell}^{\balpha} (\bm{\hx})=2d_{\ell_1}^{\alpha_0,\alpha_1} {\JJ}_{\bell-(1,0,0)}^{\balpha+\dot{\bm{e}}_{0}+\dot{\bm{e}}_1} (\bm{\hx}),\label{diff1}\\[-0.3em]
&\partial_{\hx_2} {\JJ}_{\bell}^{\balpha} (\bm{\hx})= \sum\limits_{p=0}^1 \mathcal{D}^2_p(\bell,\balpha) {\JJ}_{\bell-\left(p,1-p,0\right)}^{{\balpha}+\dot{\bm{e}}_{0}+\dot{\bm{e}}_2} (\bm{\hx}),\label{diff2}\\[-0.3em]
&\left(\partial_{\hx_2}- \partial_{\hx_1}\right) {\JJ}_{\bell}^{\balpha} (\bm{\hx})= \sum\limits_{p=0}^1 \mathcal{D}^{21}_p(\bell,\balpha) {\JJ}_{\bell-\left(p,1-p,0\right)}^{{\balpha}+\dot{\bm{e}}_{1}+\dot{\bm{e}}_2} (\bm{\hx}),\label{diff12}\\[-0.3em]
&\partial_{\hx_3} {\JJ}_{\bell}^{\balpha} (\bm{\hx})=\sum\limits_{p=0}^1 \sum\limits_{q=0}^1\mathcal{D}^3_{p,q}(\bell,\balpha)
{\JJ}_{\bell-\left(p,\,q-p,\,1-q\right)}^{{\balpha}+\dot{\bm{e}}_{0}+\dot{\bm{e}}_3} (\bm{\hx}),\label{diff3}\\[-0.3em]
&\left(\partial_{\hx_1}- \partial_{\hx_3}\right){\JJ}_{\bell}^{\balpha} (\bm{\hx}) =\sum\limits_{p=0}^1 \sum\limits_{q=0}^1\mathcal{D}^{13}_{p,q}(\bell,\balpha)
{\JJ}_{\bell-\left(p,\,q-p,\,1-q\right)}^{{\balpha}+\dot{\bm{e}}_{1}+\dot{\bm{e}}_3} (\bm{\hx}),\label{diff13}\\[-0.3em]
&\left(\partial_{\hx_3}- \partial_{\hx_2}\right) {\JJ}_{\bell}^{\balpha} (\bm{\hx})=\sum\limits_{q=0}^1  \mathcal{D}_q^{32}(\bell,\balpha){\JJ}_{\bell-\left(0,q,1-q\right)}^{{\balpha}+\dot{\bm{e}}_{2}+\dot{\bm{e}}_3}(\bm{\hx}).\label{diff23}
\end{align}
With the notations
\begin{equation*}
\begin{aligned}
&\rho_{\bell}^{\balpha} := 2d_{\ell_1}^{\alpha_0,\alpha_1} e_{1,\ell_1-1}^{\alpha_1,\alpha_0+1}-\ell_1 b_{2,\ell_1}^{\alpha_0,\alpha_1},\quad
\kappa_{\bell}^{\balpha} :=  \ell_1 b_{2,\ell_1}^{\alpha_1,\alpha_0}-2d_{\ell_1}^{\alpha_0,\alpha_1} e_{1,\ell_1-1}^{\alpha_0,\alpha_1+1},\\
&\theta_{\bell}^{\balpha} := 
2d_{\ell_2}^{2\ell_1+|\balpha^1|+1,\alpha_2} e_{1,\ell_2-1}^{\alpha_2, 2\ell_1+|\balpha^1|+2} - \ell_2 b_{2,\ell_2}^{2\ell_1+|\balpha^1|+1,\alpha_2},
\end{aligned}
\end{equation*}
the corresponding  coefficients are presented as follows.
\begin{align*}
 &\mathcal{D}_0^2(\bell,\balpha)=2d_{\ell_2} ^{2\ell_1+ |\balpha^1| +1,\alpha_2} b_{1,\ell_1}^{\alpha_0,\alpha_1},
 \\[0.2em]
& \mathcal{D}_1^2(\bell,\balpha)
=\tfrac{ \left( 2d_{\ell_1}^{\alpha_0,\alpha_1} e_{1,\ell_1-1}^{\alpha_1,\alpha_0+1} -\ell_1 b_{2,\ell_1}^{\alpha_0,\alpha_1} \right) b_{1,\ell_2}^{2\ell_1+ |\balpha^1| +1,\alpha_2} + 
2d_{\ell_2}^{2\ell_1+ |\balpha^1| +1,\alpha_2}  b_{2,\ell_1}^{\alpha_0,\alpha_1} e_{2,\ell_2-1}^{2\ell_1+ |\balpha^1| +1,\alpha_2+1}}{ b_{1,\ell_2}^{2\ell_1+ |\balpha^1| ,\alpha_2+1}}.
\end{align*}

\vspace{-1em}

\begin{align*}
& \mathcal{D}_0^{21}(\bell,\balpha) = \mathcal{D}_0^{2}(\bell,\balpha),\\
& \mathcal{D}_1^{21}(\bell,\balpha) = 
\tfrac{\left(\ell_1 b_{2,\ell_1}^{\alpha_1,\alpha_0} -2d_{\ell_1}^{\alpha_0,\alpha_1} e_{1,\ell_1-1}^{\alpha_0,\alpha_1+1} \right) b_{1,\ell_2}^{2\ell_1+ |\balpha^1| +1,\alpha_2} - 
2d_{\ell_2}^{2\ell_1+ |\balpha^1| +1,\alpha_2}  b_{2,\ell_1}^{\alpha_1,\alpha_0} e_{2,\ell_2-1}^{2\ell_1+ |\balpha^1| +1,\alpha_2+1}}{ b_{1,\ell_2}^{2\ell_1+ |\balpha^1| ,\alpha_2+1}}.
\end{align*}

\vspace{-1em}

\begin{align*}
&\mathcal{D}^3_{0,0}(\bell,\balpha) = 2d_{\ell_3}^{2|\bell^2|+|\balpha^2|+2,\alpha_3} b_{1,\ell_1}^{\alpha_0,\alpha_1} b_{1,\ell_2}^{2\ell_1+|\balpha^1|+1,\alpha_2},\\
&\mathcal{D}^3_{0,1}(\bell,\balpha) = \tfrac{ b_{1,\ell_1}^{\alpha_0,\alpha_1} \theta_{\bell}^{\balpha}
b_{1,\ell_3}^{2|\bell^2| +|\balpha^2| +2,\alpha_3}
+2 d_{\ell_3}^{2|\bell^2| + |\balpha^2| +2,\alpha_3} b_{1,\ell_1}^{\alpha_0,\alpha_1} b_{2,\ell_2}^{2\ell_1+|\balpha^1| +1,\alpha_2} e_{2,\ell_3-1}^{2|\bell^2| +|\balpha^2| +2,\alpha_3+1} } {b_{1,\ell_3}^{2|\bell^2| +|\balpha^2|+2,\alpha_3+1}},\\
&\mathcal{D}^3_{1,0}(\bell,\balpha) = 2d_{\ell_3}^{2|\bell^2|+|\balpha^2|+2,\alpha_3} b_{2,\ell_1}^{\alpha_0,\alpha_1} e_{2,\ell_2}^{2\ell_1+|\balpha^1|,\alpha_2},\\
&\mathcal{D}^3_{1,1}(\bell,\balpha) = 
\tfrac{
\left(
\rho_{\bell}^{\balpha} + b_{2,\ell_1}^{\alpha_0,\alpha_1} e_{2,\ell_2-1}^{2\ell_1+|\balpha^1|+1,\alpha_2} \theta_{\bell}^{\balpha}
\right) b_{1,\ell_3}^{2|\bell^2|+|\balpha^2|+2,\alpha_3} +2b_{1,\ell_2}^{2\ell_1+|\balpha^1|,\alpha_2} 
d_{\ell_3}^{2|\bell^2|+|\balpha^2| +2,\alpha_3} 
b_{2,\ell_1}^{\alpha_0,\alpha_1} 
e_{1,\ell_2}^{2\ell_1+|\balpha^2|,\alpha_2} 
e_{2,\ell_3-1}^{2|\bell^2| +|\balpha^2|+2,\alpha_3+1}}
{b_{1,\ell_2}^{2\ell_1+|\balpha^1|,\alpha_2} b_{1,\ell_3}^{2|\bell^2|+|\balpha^2|+1,\alpha_3+1}}.
\end{align*}

\vspace{-1em}

\begin{align*}
&\mathcal{D}^{13}_{0,0}(\bell,\balpha) = 
-\mathcal{D}^{3}_{0,0}(\bell,\balpha),\\
&\mathcal{D}^{13}_{0,1}(\bell,\balpha) = 
-\mathcal{D}^{3}_{0,1}(\bell,\balpha),\\
&\mathcal{D}^{13}_{1,0}(\bell,\balpha) = 
2d_{\ell_3}^{2|\bell^2|+|\balpha^2|+2,\alpha_3} 
b_{2,\ell_1}^{\alpha_1,\alpha_0} 
e_{2,\ell_2}^{2\ell_1+|\balpha^1|,\alpha_2},\\
&\mathcal{D}^{13}_{1,1}(\bell,\balpha) = 
\tfrac{
\left(
  b_{2,\ell_1}^{\alpha_1,\alpha_0} e_{2,\ell_2-1}^{2\ell_1+|\balpha^1|+1,\alpha_2} \theta_{\bell}^{\balpha}-\kappa_{\bell}^{\balpha}
\right) b_{1,\ell_3}^{2|\bell^2|+|\balpha^2|+2,\alpha_3} +2b_{1,\ell_2}^{2\ell_1+|\balpha^1|,\alpha_2} 
d_{\ell_3}^{2|\bell^2|+|\balpha^2| +2,\alpha_3} 
b_{2,\ell_1}^{\alpha_1,\alpha_0} 
e_{1,\ell_2}^{2\ell_1+|\balpha^2|,\alpha_2} 
e_{2,\ell_3-1}^{2|\bell^2| +|\balpha^2|+2,\alpha_3+1}}
{b_{1,\ell_2}^{2\ell_1+|\balpha^1|,\alpha_2} b_{1,\ell_3}^{2|\bell^2|+|\balpha^2|+1,\alpha_3+1}}.
\end{align*}

\vspace{-1em}

\begin{align*}
&\mathcal{D}^{32}_{0}(\bell,\balpha) = 
2d_{\ell_3}^{2|\bell^2| + |\balpha^2|+2,\alpha_3} b_{1,\ell_2}^{2\ell_1+|\balpha^1|+1,\alpha_2},\\
&\mathcal{D}^{32}_{1}(\bell,\balpha)  = 
\tfrac{\left( \ell_2 b_{2,\ell_2} ^{\alpha_2, 2\ell_1+|\balpha^1|+1}-2d_{\ell_2}^{2\ell_1+|\balpha^1|+1,\alpha_2} 
e_{1,\ell_2-1}^{2\ell_1+|\balpha^1|+1,\alpha_2+1}\right) b_{1,\ell_3}^{2|\bell^2|+|\balpha^2| +2,\alpha_3} -
2d_{\ell_3}^{2|\bell^2| +|\balpha^2|+2,\alpha_3} 
b_{2,\ell_2}^{\alpha_2, 2\ell_1+|\balpha^1|+1}
e_{2,\ell_3-1}^{2|\bell^2|+|\balpha^2|+2,\alpha_3+1}}
{b_{1,\ell_3}^{2|\bell^2|+|\balpha^2|+1,\alpha_3+1}}.
\end{align*}

\end{lemma}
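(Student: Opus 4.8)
The plan is to work entirely in the collapsed coordinates $(\xi,\eta,\zeta)$ of \eqref{duffy_3d} and the factored form \eqref{koornwinder_xi}, so that all six identities reduce to one-dimensional Jacobi manipulations. First I would record the chain rule: a direct differentiation of \eqref{duffy_3d} gives $\partial_{\hx_1}=\tfrac{2}{1-\hx_2-\hx_3}\partial_\xi$, $\partial_{\hx_2}=\tfrac{1+\xi}{1-\hx_2-\hx_3}\partial_\xi+\tfrac{2}{1-\hx_3}\partial_\eta$ and $\partial_{\hx_3}=\tfrac{1+\xi}{1-\hx_2-\hx_3}\partial_\xi+\tfrac{1+\eta}{1-\hx_3}\partial_\eta+2\partial_\zeta$, hence also the three differences in the statement. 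Since $1-\hx_3=\tfrac{1-\zeta}{2}$ and $1-\hx_2-\hx_3=\tfrac14(1-\eta)(1-\zeta)$, every prefactor is a monomial in $\tfrac{1-\eta}{2}$ and $\tfrac{1-\zeta}{2}$, the very factors carried by \eqref{koornwinder_xi}.

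Identity \eqref{diff1} then follows at once: only $\partial_\xi$ appears, \eqref{LemEQ1} turns $J_{\ell_1}^{\alpha_0,\alpha_1}(\xi)$ into $d_{\ell_1}^{\alpha_0,\alpha_1}J_{\ell_1-1}^{\alpha_0+1,\alpha_1+1}(\xi)$, the prefactor $\tfrac{2}{1-\hx_2-\hx_3}$ cancels exactly one power each of $\tfrac{1-\eta}{2}$ and $\tfrac{1-\zeta}{2}$, and a one-line check of the three shifted Jacobi indices (e.g.\ $2(\ell_1-1)+(\alpha_0+1)+(\alpha_1+1)+1=2\ell_1+|\balpha^1|+1$) identifies the result as $\JJ_{\bell-(1,0,0)}^{\balpha+\dot{\bm{e}}_0+\dot{\bm{e}}_1}$.

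For the remaining five identities I would run the same scheme with three extra ingredients. Applying $\partial_\eta$ or $\partial_\zeta$ to \eqref{koornwinder_xi} hits the power factors $\big(\tfrac{1-\eta}{2}\big)^{\ell_1}$ and $\big(\tfrac{1-\zeta}{2}\big)^{\ell_1+\ell_2}$ as well as the Jacobi factors, so the product rule together with \eqref{LemEQ1} produces, for each directional derivative, a short fixed list of terms $J(\xi)\,(\tfrac{1-\eta}{2})^{a}J(\eta)\,(\tfrac{1-\zeta}{2})^{b}J(\zeta)$ in which (i) a leftover $1\pm\xi$ or $1\pm\eta$ sits in front of one Jacobi factor, and (ii) the Jacobi degrees and the exponents $a,b$ are slightly off from those of a genuine Koornwinder polynomial. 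I would clear the leftover with \eqref{LemEQ3(1)}--\eqref{LemEQ3(2)}, using the $\tfrac{1-z}{2}$ form when the combination produces $\xi-1$ (and similarly $\eta-1$), which is exactly what routes the answer into the $\dot{\bm{e}}_1$/$\dot{\bm{e}}_3$ family instead of the $\dot{\bm{e}}_0$/$\dot{\bm{e}}_2$ one and thus accounts for the difference between \eqref{diff2} and \eqref{diff12}, and among \eqref{diff3}, \eqref{diff13}, \eqref{diff23}. Then I would reconcile the first Jacobi indices with \eqref{LemEQ2(1)}--\eqref{LemEQ2(2)} and use the three-term recurrence \eqref{three_1d} (together with \eqref{LemEQ3(1)}) to restore the missing power of $\tfrac{1-\eta}{2}$; collecting coefficients should yield $\rho_{\bell}^{\balpha}$, $\kappa_{\bell}^{\balpha}$, $\theta_{\bell}^{\balpha}$ and the stated rational expressions for $\mathcal{D}^2_p$, $\mathcal{D}^{21}_p$, $\mathcal{D}^3_{p,q}$, $\mathcal{D}^{13}_{p,q}$, $\mathcal{D}^{32}_q$ verbatim.

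The main obstacle is precisely this last bookkeeping. The three Jacobi parameters of $\JJ_\bell^\balpha$ are coupled through $2\ell_1+|\balpha^1|+1$ and $2|\bell^2|+|\balpha^2|+2$, so lowering a degree or raising a parameter in one factor forces a compensating shift in the next, which in turn forces the inversion of an increasing-parameter relation; the residue of this inversion is what produces the denominators $b_{1,\ell_2}^{2\ell_1+|\balpha^1|,\alpha_2+1}$ and $b_{1,\ell_3}^{2|\bell^2|+|\balpha^2|+1,\alpha_3+1}$ appearing in the coefficients. One also has to check that the spurious lower-degree Jacobi terms generated along the way cancel, which amounts to a handful of scalar identities among the $b$, $d$ and $e$ coefficients of Lemmas \ref{lemma_three}--\ref{lemma_differential}. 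Throughout I would invoke the $k=0,1$ branches of those coefficients and the convention $J_k^{\alpha,\beta}=b_{1,k}^{\alpha,\beta}=b_{2,k}^{\alpha,\beta}\equiv0$ for $k<0$, so that the formulas hold for every $\bell\in\NN_0^3$ down to the low-degree corners.
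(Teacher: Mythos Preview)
Your proposal is correct and follows essentially the same route as the paper: pass to the collapsed coordinates \eqref{duffy_3d}, compute the chain rule, apply the product rule and \eqref{LemEQ1} term by term, then repair the mismatched Jacobi indices and stray $(1\pm\xi)$, $(1\pm\eta)$, $(1-\eta)/2$, $(1-\zeta)/2$ factors with \eqref{LemEQ2(1)}--\eqref{LemEQ2(2)} and \eqref{LemEQ3(1)}--\eqref{LemEQ3(2)}, checking the scalar cancellations (e.g.\ $-2d_{\ell_1}^{\alpha_0,\alpha_1}e_{2,\ell_1-1}^{\alpha_0+1,\alpha_1}-\ell_1 b_{1,\ell_1}^{\alpha_0,\alpha_1}=0$) and recombining the surviving pair of $\eta$- or $\zeta$-Jacobi terms via the inverse of an increasing-parameter relation to produce the denominators. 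The only superfluous ingredient you list is the three-term recurrence \eqref{three_1d}; the paper's proof never invokes it, since \eqref{LemEQ3(1)} alone already absorbs the extra $(1-\eta)/2$ or $(1-\zeta)/2$ power.
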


\begin{proof}
It follows from \eqref{duffy_3d} that
{\small
\begin{equation}
\begin{cases}
\partial_{\hx_1}  = \dfrac{8}{(1-\eta)(1-\zeta)} \partial_{\xi},\\[1em]
\partial_{\hx_2} = \dfrac{4(1+\xi)}{(1-\eta)(1-\zeta)} \partial_{\xi} + \dfrac{4}{1-\zeta} \partial_{\eta},\\[1em]
\partial_{\hx_3} = \dfrac{4(1+\xi)}{(1-\eta)(1-\zeta)} \partial_{\xi} + \dfrac{2(1+\eta)}{1-\zeta} \partial_{\eta} +2\partial_{\zeta}.
\end{cases}
\end{equation}
}
We take the proof of \eqref{diff2} as an example.
Other identities shall be proved in a similar way.
To begin with, when $\ell_1 = 0,$ one has
{\small
\begin{equation*}
\begin{aligned}
&\partial_{\hx_2}\JJ^{\alpha_0,\alpha_1,\alpha_2,\alpha_3}_{0,\ell_2,\ell_3} = 2 \partial_\eta 
J_{\ell_2} ^{ |\balpha^1| +1,\alpha_2} (\eta)
\left( \frac{1-\zeta}{2}\right)^{\ell_2-1} J_{\ell_3}^{2\ell_2+ |\balpha^2| +2,\alpha_3}(\zeta)\\
&=2d_{\ell_2}^{ |\balpha^1| +1,\alpha_2} J_{\ell_2-1} ^{ |\balpha^1| +2,\alpha_2+1} (\eta)  \left( \frac{1-\zeta}{2}\right)^{\ell_2-1} J_{\ell_3}^{2\ell_2+ |\balpha^2| +2,\alpha_3}(\zeta)=2d_{\ell_2}^{ |\balpha^1| +1,\alpha_2} \JJ_{0,\ell_2-1,\ell_3}^{\alpha_0+1,\alpha_1,\alpha_2+1,\alpha_3}.
\end{aligned}
\end{equation*}
}
When $\ell_1>1,$ a direct computation yields
{\small
\begin{equation}\label{partialx2_1}
\begin{aligned}
&\partial_{\hx_2}\JJ^{\balpha}_{\bell} = \bigg[ (1+\xi) \partial_{\xi} J_{\ell_1} ^{\alpha_0,\alpha_1} (\xi) \left( \frac{1-\eta}{2}\right)^{\ell_1-1} J_{\ell_2} ^{2\ell_1+ |\balpha^1| +1,\alpha_2} (\eta)
\\
&\quad+ 2 J_{\ell_1} ^{\alpha_0,\alpha_1} (\xi) \partial_\eta 
\big[  \left( \frac{1-\eta}{2}\right)^{\ell_1} J_{\ell_2} ^{2\ell_1+ |\balpha^1| +1,\alpha_2} (\eta)
\big]\bigg]
\times
\left( \frac{1-\zeta}{2}\right)^{|\bell^2|-1} J_{\ell_3}^{2|\bell^2|+ |\balpha^2| +2,\alpha_3}(\zeta)\\
&= \bigg[ \left(  d_{\ell_1}^{\alpha_0,\alpha_1} (1+\xi) J_{\ell_1-1}^{\alpha_0+1,\alpha_1+1}(\xi)-\ell_1 J_{\ell_1}^{\alpha_0,\alpha_1}(\xi)  \right)\left( \frac{1-\eta}{2}\right)^{\ell_1-1} J_{\ell_2} ^{2\ell_1+ |\balpha^1| +1,\alpha_2} (\eta) \\
&\quad +2d_{\ell_2}^{2\ell_1+ |\balpha^1| +1,\alpha_2} J_{\ell_1}^{\alpha_0,\alpha_1} (\xi) \left( \frac{1-\eta}{2}\right)^{\ell_1} J_{\ell_2-1} ^{2\ell_1+ |\balpha^1| +2,\alpha_2+1} (\eta)  \bigg] \left( \frac{1-\zeta}{2}\right)^{|\bell^2|-1} J_{\ell_3}^{2|\bell^2|+ |\balpha^2| +2,\alpha_3}(\zeta).
\end{aligned}
\end{equation}
}
Recalling \eqref{LemEQ2(1)} and \eqref{LemEQ3(2)}, we have
{\small
\begin{equation*}
\begin{aligned}
&\quad d_{\ell_1}^{\alpha_0,\alpha_1} (1+\xi) J_{\ell_1-1}^{\alpha_0+1,\alpha_1+1}(\xi)-\ell_1 J_{\ell_1}^{\alpha_0,\alpha_1}(\xi) \\
&\quad = 
2d_{\ell_1}^{\alpha_0,\alpha_1} 
\left( e_{1,\ell_1-1}^{\alpha_1,\alpha_0+1} J_{\ell_1-1}^{\alpha_0+1,\alpha_1}(\xi)  
-e_{2,\ell_1-1}^{\alpha_0+1,\alpha_1} J_{\ell_1}^{\alpha_0+1,\alpha_1}(\xi)\right) -\ell_1 \left( 
b_{1,\ell_1}^{\alpha_0,\alpha_1} J_{\ell_1}^{\alpha_0+1,\alpha_1}(\xi) +  b_{2,\ell_1}^{\alpha_0,\alpha_1} J_{\ell_1-1}^{\alpha_0+1,\alpha_1}(\xi)
\right)\\
&\quad =
\left( 2d_{\ell_1}^{\alpha_0,\alpha_1} e_{1,\ell_1-1}^{\alpha_1,\alpha_0+1} -\ell_1 b_{2,\ell_1}^{\alpha_0,\alpha_1}\right) J_{\ell_1-1}^{\alpha_0+1,\alpha_1}(\xi).\quad  
\left(\because -2d_{\ell_1}^{\alpha_0,\alpha_1} e_{2,\ell_1-1}^{\alpha_0+1,\alpha_1} -\ell_1 b_{1,\ell_1}^{\alpha_0,\alpha_1} = 0\right)
\end{aligned}
\end{equation*}
}
Substituting the above formula into \eqref{partialx2_1}  and using \eqref{LemEQ2(1)}, \eqref{LemEQ2(2)} and \eqref{LemEQ3(1)}, one has
{\small
\begin{equation*}
\begin{aligned}
&\partial_{\hx_2} \JJ_{\bell}^{\balpha} = 
\bigg[ \left( 2d_{\ell_1}^{\alpha_0,\alpha_1} e_{1,\ell_1-1}^{\alpha_1,\alpha_0+1} -\ell_1 b_{2,\ell_1}^{\alpha_0,\alpha_1}\right) J_{\ell_1-1}^{\alpha_0+1,\alpha_1}(\xi)
\left( \frac{1-\eta}{2}\right)^{\ell_1-1} J_{\ell_2} ^{2\ell_1+ |\balpha^1| +1,\alpha_2} (\eta) \\
&\quad +2d_{\ell_2}^{2\ell_1+ |\balpha^1| +1,\alpha_2}  
\left( b_{1,\ell_1}^{\alpha_0,\alpha_1} J_{\ell_1}^{\alpha_0+1,\alpha_1}(\xi) 
+b_{2,\ell_1}^{\alpha_0,\alpha_1} J_{\ell_1-1}^{\alpha_0+1,\alpha_1}(\xi)\right]
 \left( \frac{1-\eta}{2}\right)^{\ell_1} J_{\ell_2-1} ^{2\ell_1+ |\balpha^1| +2,\alpha_2+1} (\eta)  \bigg] \\
 &\quad\times \left( \frac{1-\zeta}{2}\right)^{|\bell^2|-1} J_{\ell_3}^{2|\bell^2|+ |\balpha^2| +2,\alpha_3}(\zeta)\\
&=2d_{\ell_2} ^{2\ell_1+ |\balpha^1| +1,\alpha_2} b_{1,\ell_1}^{\alpha_0,\alpha_1} \JJ_{\ell_1,\ell_2-1,\ell_3}^{\alpha_0+1,\alpha_1,\alpha_2+1,\alpha_3} +  J_{\ell_1-1}^{\alpha_0+1,\alpha_1}(\xi)
\left( \frac{1-\eta}{2}\right)^{\ell_1-1}
\left( \frac{1-\zeta}{2}\right)^{|\bell^2|-1} J_{\ell_3}^{2|\bell^2|+ |\balpha^2| +2,\alpha_3}(\zeta)\\
 &\quad \times \bigg[
 \left( 2d_{\ell_1}^{\alpha_0,\alpha_1} e_{1,\ell_1-1}^{\alpha_1,\alpha_0+1} -\ell_1 b_{2,\ell_1}^{\alpha_0,\alpha_1}\right)
 J_{\ell_2} ^{2\ell_1+ |\balpha^1| +1,\alpha_2} (\eta)
 +2d_{\ell_2}^{2\ell_1+ |\balpha^1| +1,\alpha_2}  b_{2,\ell_1}^{\alpha_0,\alpha_1}  \frac{1-\eta}{2} J_{\ell_2-1} ^{2\ell_1+  |\balpha^1| +2,\alpha_2+1} (\eta)
  \bigg]
\\
&=2d_{\ell_2} ^{2\ell_1+ |\balpha^1| +1,\alpha_2} b_{1,\ell_1}^{\alpha_0,\alpha_1} \JJ_{\ell_1,\ell_2-1,\ell_3}^{\alpha_0+1,\alpha_1,\alpha_2+1,\alpha_3}
+ J_{\ell_1-1}^{\alpha_0+1,\alpha_1}(\xi)
\left( \frac{1-\eta}{2}\right)^{\ell_1-1}
\left( \frac{1-\zeta}{2}\right)^{|\bell^2|-1} J_{\ell_3}^{2|\bell^2|+ |\balpha^2| +2,\alpha_3}(\zeta)\\
&\quad \times \bigg[ \left( 2d_{\ell_1}^{\alpha_0,\alpha_1} e_{1,\ell_1-1}^{\alpha_1,\alpha_0+1} -\ell_1 b_{2,\ell_1}^{\alpha_0,\alpha_1}\right) 
\left( 
b_{1,\ell_2}^{2\ell_1+ |\balpha^1| +1,\alpha_2} J_{\ell_2}^{2\ell_1+ |\balpha^1| +1,\alpha_2+1} (\eta) 
-b_{2,\ell_2}^{\alpha_2, 2\ell_1+ |\balpha^1| +1} J_{\ell_2-1}^{2\ell_1+ |\balpha^1| +1,\alpha_2+1} (\eta) 
\right)\\
&\quad+ 2d_{\ell_2}^{2\ell_1+ |\balpha^1| +1,\alpha_2}  b_{2,\ell_1}^{\alpha_0,\alpha_1} 
\left( 
e_{1,\ell_2-1}^{2\ell_1+ |\balpha^1| +1,\alpha_2+1} J_{\ell_2-1}^{2\ell_1+ |\balpha^1| +1,\alpha_2+1}(\eta)
+e_{2,\ell_2-1}^{2\ell_1+ |\balpha^1| +1,\alpha_2+1} J_{\ell_2}^{2\ell_1+ |\balpha^1| +1,\alpha_2+1}(\eta)
\right)\bigg]\\
&=2d_{\ell_2} ^{2\ell_1+ |\balpha^1| +1,\alpha_2} b_{1,\ell_1}^{\alpha_0,\alpha_1} \JJ_{\ell_1,\ell_2-1,\ell_3}^{\alpha_0+1,\alpha_1,\alpha_2+1,\alpha_3} 
+ J_{\ell_1-1}^{\alpha_0+1,\alpha_1}(\xi)
\left( \frac{1-\eta}{2}\right)^{\ell_1-1}
\left( \frac{1-\zeta}{2}\right)^{|\bell^2|-1} J_{\ell_3}^{2|\bell^2|+ |\balpha^2| +2,\alpha_3}(\zeta)\\
&\quad \times \bigg[ \left( ( 2d_{\ell_1}^{\alpha_0,\alpha_1} e_{1,\ell_1-1}^{\alpha_1,\alpha_0+1} -\ell_1 b_{2,\ell_1}^{\alpha_0,\alpha_1}) b_{1,\ell_2}^{2\ell_1+ |\balpha^1| +1,\alpha_2} + 
2d_{\ell_2}^{2\ell_1+ |\balpha^1| +1,\alpha_2}  b_{2,\ell_1}^{\alpha_0,\alpha_1} e_{2,\ell_2-1}^{2\ell_1+ |\balpha^1| +1,\alpha_2+1}  \right)
J_{\ell_2}^{2\ell_1+ |\balpha^1| +1,\alpha_2+1}(\eta)\\
&\quad+
\left( (  \ell_1 b_{2,\ell_1}^{\alpha_0,\alpha_1} - 2d_{\ell_1}^{\alpha_0,\alpha_1} e_{1,\ell_1-1}^{\alpha_1,\alpha_0+1}) 
b_{2,\ell_2}^{\alpha_2,2\ell_1+ |\balpha^1| +1}  +
2d_{\ell_2}^{2\ell_1+ |\balpha^1| +1,\alpha_2}  b_{2,\ell_1}^{\alpha_0,\alpha_1} 
e_{1,\ell_2-1}^{2\ell_1+ |\balpha^1| +1,\alpha_2+1} \right)
J_{\ell_2-1}^{2\ell_1+ |\balpha^1| +1,\alpha_2+1}(\eta)
\bigg].
\end{aligned}
\end{equation*}
Note that $b_{1,\ell_2}^{2\ell_1+ |\balpha^1| ,\alpha_2+1}\neq 0$ when $\ell_1>0.$ It is readily  checked that
\begin{equation*}
\begin{aligned}
& \quad\left( ( 2d_{\ell_1}^{\alpha_0,\alpha_1} e_{1,\ell_1-1}^{\alpha_1,\alpha_0+1} -\ell_1 b_{2,\ell_1}^{\alpha_0,\alpha_1}) b_{1,\ell_2}^{2\ell_1+ |\balpha^1| +1,\alpha_2} + 
2d_{\ell_2}^{2\ell_1+ |\balpha^1| +1,\alpha_2}  b_{2,\ell_1}^{\alpha_0,\alpha_1} e_{2,\ell_2-1}^{2\ell_1+ |\balpha^1| +1,\alpha_2+1}  \right)
J_{\ell_2}^{2\ell_1+ |\balpha^1| +1,\alpha_2+1}(\eta)\\
&\quad\,+
\left(  (  \ell_1 b_{2,\ell_1}^{\alpha_0,\alpha_1} - 2d_{\ell_1}^{\alpha_0,\alpha_1} e_{1,\ell_1-1}^{\alpha_1,\alpha_0+1}) 
b_{2,\ell_2}^{\alpha_2,2\ell_1+ |\balpha^1| +1} +
2d_{\ell_2}^{2\ell_1+ |\balpha^1| +1,\alpha_2}  b_{2,\ell_1}^{\alpha_0,\alpha_1} 
e_{1,\ell_2-1}^{2\ell_1+ |\balpha^1| +1,\alpha_2+1} \right)
J_{\ell_2-1}^{2\ell_1+ |\balpha^1| +1,\alpha_2+1}(\eta)
\\
&=\frac{( 2d_{\ell_1}^{\alpha_0,\alpha_1} e_{1,\ell_1-1}^{\alpha_1,\alpha_0+1} -\ell_1 b_{2,\ell_1}^{\alpha_0,\alpha_1}) b_{1,\ell_2}^{2\ell_1+ |\balpha^1| +1,\alpha_2} + 
2d_{\ell_2}^{2\ell_1+ |\balpha^1| +1,\alpha_2}  b_{2,\ell_1}^{\alpha_0,\alpha_1} e_{2,\ell_2-1}^{2\ell_1+ |\balpha^1| +1,\alpha_2+1}}{ b_{1,\ell_2}^{2\ell_1+ |\balpha^1| ,\alpha_2+1}}\\
&\quad\times \left(  b_{1,\ell_2}^{2\ell_1+ |\balpha^1| ,\alpha_2+1}  J_{\ell_2}^{2\ell_1+ |\balpha^1| +1,\alpha_2+1} (\eta)
+ b_{2,\ell_2}^{2\ell_1+ |\balpha^1| ,\alpha_2+1}  J_{\ell_2-1}^{2\ell_1+ |\balpha^1| +1,\alpha_2+1} (\eta)
\right)\\
&=\frac{( 2d_{\ell_1}^{\alpha_0,\alpha_1} e_{1,\ell_1-1}^{\alpha_1,\alpha_0+1} -\ell_1 b_{2,\ell_1}^{\alpha_0,\alpha_1}) b_{1,\ell_2}^{2\ell_1+ |\balpha^1| +1,\alpha_2} + 
2d_{\ell_2}^{2\ell_1+ |\balpha^1| +1,\alpha_2}  b_{2,\ell_1}^{\alpha_0,\alpha_1} e_{2,\ell_2-1}^{2\ell_1+ |\balpha^1| +1,\alpha_2+1}}{ b_{1,\ell_2}^{2\ell_1+ |\balpha^1| ,\alpha_2+1}} J_{\ell_2}^{2\ell_1+ |\balpha^1| ,\alpha_2+1} (\eta).
\end{aligned}
\end{equation*}
}
Thus, it concludes that
{\small
\begin{equation*}
\begin{aligned}
&\partial_{\hx_2} \JJ_{\bell}^{\balpha} = 2d_{\ell_2} ^{2\ell_1+ |\balpha^1| +1,\alpha_2} b_{1,\ell_1}^{\alpha_0,\alpha_1} \JJ_{\ell_1,\ell_2-1,\ell_3}^{\alpha_0+1,\alpha_1,\alpha_2+1,\alpha_3}\\
&\quad +\frac{( 2d_{\ell_1}^{\alpha_0,\alpha_1} e_{1,\ell_1-1}^{\alpha_1,\alpha_0+1} -\ell_1 b_{2,\ell_1}^{\alpha_0,\alpha_1}) b_{1,\ell_2}^{2\ell_1+ |\balpha^1| +1,\alpha_2} + 
2d_{\ell_2}^{2\ell_1+ |\balpha^1| +1,\alpha_2}  b_{2,\ell_1}^{\alpha_0,\alpha_1} e_{2,\ell_2-1}^{2\ell_1+ |\balpha^1| +1,\alpha_2+1}}{ b_{1,\ell_2}^{2\ell_1+ |\balpha^1| ,\alpha_2+1}}
\JJ_{\ell_1-1,\ell_2,\ell_3}^{\alpha_0+1,\alpha_1,\alpha_2+1,\alpha_3}.
\end{aligned}
\end{equation*}
}
This ends the proof.
\end{proof}

\section{Coefficients in the three-term recurrence relations}\label{threecoeff}
By introducing the notations,
\begin{align*}
&\tau_{1,\bell}^{\balpha} := \tfrac{c_{1,\ell_3}^{2|\bell^2|+|\balpha^2|+2,\alpha_3}}{2},\quad 
&&\tau_{2,\bell}^{\balpha} := \tfrac{c_{2,\ell_3}^{2|\bell^2|+|\balpha^2|+2,\alpha_3}}{2},\quad
&&\tau_{3,\bell}^{\balpha} := \tfrac{c_{3,\ell_3}^{2|\bell^2|+|\balpha^2|+2,\alpha_3}}{2},\\[-0.1em]
&\tau_{4,\bell}^{\balpha} := -\tfrac{a_{1,\ell_3}^{2|\bell^2|+|\balpha^2|+2,\alpha_3}}{2},\quad
&&\tau_{5,\bell}^{\balpha} := \tfrac{(1-a_{2,\ell_3}^{2|\bell^2|+|\balpha^2|+2,\alpha_3})}{2},\quad
&&\tau_{6,\bell}^{\balpha} := -\tfrac{a_{3,\ell_3}^{2|\bell^2|+|\balpha^2|+2,\alpha_3}}{2},\\[-0.1em]
&\tau_{7,\bell}^{\balpha} := \tfrac{g_{1,\ell_3}^{2|\bell^2|+|\balpha^2|,\alpha_3}}{2},\quad
&& \tau_{8,\bell}^{\balpha} := \tfrac{g_{2,\ell_3}^{2|\bell^2|+|\balpha^2|,\alpha_3}}{2},\quad
&& \tau_{9,\bell}^{\balpha} := \tfrac{g_{3,\ell_3}^{2|\bell^2|+|\balpha^2|,\alpha_3}}{2},
\end{align*}
we list the coefficient $\mathscr{C}_{p,q,r}(\bell,\balpha)$, $\mathscr{C}_{q,r}(\bell,\balpha)$ and $\mathscr{C}_{r}(\bell,\balpha)$ in Theorem \ref{3Recurrence}  as follows:

\begin{table}[H]
\caption{{The values of $\mathscr{C}_{p,q,r}(\bell,\balpha)$, $\mathscr{C}_{q,r}(\bell,\balpha)$ and $\mathscr{C}_{r}(\bell,\balpha)$.}}
\label{threeterm_table}
\centerline{
\begin{tabular}{|c|c|c|c|}
\hline 
{{$(p,q,r)$}} & { $\mathscr{C}_{p,q,r}(\bell,\balpha)$}& {{$(p,q,r)$}}&   {$\mathscr{C}_{p,q,r} (\bell,\balpha)$}\\ 
\hline
(-1,-1,-1)
&$a_{1,\ell_1}^{\alpha_0,\alpha_1} c_{1,\ell_2}^{2\ell_1+  |\balpha^1|+1,\alpha_2} \tau_{1,\bell}^{\balpha}$ 
&(1,-1,-1)
& $a_{3,\ell_1}^{\alpha_0,\alpha_1} g_{1,\ell_2}^{2\ell_1+  |\balpha^1|-1,\alpha_2} \tau_{1,\bell}^{\balpha}$\\
 \hline
(-1,-1,0)
& $a_{1,\ell_1}^{\alpha_0,\alpha_1} c_{1,\ell_2}^{2\ell_1+  |\balpha^1|+1,\alpha_2} \tau_{2,\bell}^{\balpha}$  
& (1,-1,0)
& $a_{3,\ell_1}^{\alpha_0,\alpha_1} g_{1,\ell_2}^{2\ell_1+  |\balpha^1|-1,\alpha_2} \tau_{2,\bell}^{\balpha}$\\
\hline
(-1,-1,1)
 & $a_{1,\ell_1}^{\alpha_0,\alpha_1} c_{1,\ell_2}^{2\ell_1+  |\balpha^1|+1,\alpha_2} \tau_{3,\bell}^{\balpha}$
 & (1,-1,1)
 & $a_{3,\ell_1}^{\alpha_0,\alpha_1} g_{1,\ell_2}^{2\ell_1+  |\balpha^1|-1,\alpha_2} \tau_{3,\bell}^{\balpha}$\\
\hline
(-1,0,-1)
&  $a_{1,\ell_1}^{\alpha_0,\alpha_1} c_{2,\ell_2}^{2\ell_1+  |\balpha^1|+1,\alpha_2}\tau_{4,\bell}^{\balpha} /2$
& (1,0,-1)
& $ a_{3,\ell_1}^{\alpha_0,\alpha_1} g_{2,\ell_2}^{2\ell_1+  |\balpha^1|-1,\alpha_2} \tau_{4,\bell}^{\balpha}/2$\\
  \hline
 (-1,0,0)
 &  $a_{1,\ell_1}^{\alpha_0,\alpha_1} c_{2,\ell_2}^{2\ell_1+  |\balpha^1|+1,\alpha_2}\tau_{5,\bell}^{\balpha} /2$
&(1,0,0)
& $ a_{3,\ell_1}^{\alpha_0,\alpha_1} g_{2,\ell_2}^{2\ell_1+  |\balpha^1|-1,\alpha_2} \tau_{5,\bell}^{\balpha}/2$\\
\hline
(-1,0,1)
&  $a_{1,\ell_1}^{\alpha_0,\alpha_1} c_{2,\ell_2}^{2\ell_1+  |\balpha^1|+1,\alpha_2} \tau_{6,\bell}^{\balpha}/2$
 & (1,0,1)
 & $ a_{3,\ell_1}^{\alpha_0,\alpha_1} g_{2,\ell_2}^{2\ell_1+  |\balpha^1|-1,\alpha_2} \tau_{6,\bell}^{\balpha}/2$\\
\hline  
(-1,1,-1)
& $a_{1,\ell_1}^{\alpha_0,\alpha_1} c_{3,\ell_2}^{2\ell_1+  |\balpha^1|+1,\alpha_2} \tau_{7,\bell}^{\balpha}$ 
 &(1,1,-1)
 & $a_{3,\ell_1}^{\alpha_0,\alpha_1} g_{3,\ell_2}^{2\ell_1+  |\balpha^1|-1,\alpha_2} \tau_{7,\bell}^{\balpha}$\\
\hline
(-1,1,0)
& $a_{1,\ell_1}^{\alpha_0,\alpha_1} c_{3,\ell_2}^{2\ell_1+  |\balpha^1|+1,\alpha_2} \tau_{8,\bell}^{\balpha}$ 
 & (1,1,0)
 & $a_{3,\ell_1}^{\alpha_0,\alpha_1} g_{3,\ell_2}^{2\ell_1+  |\balpha^1|-1,\alpha_2} \tau_{8,\bell}^{\balpha}$\\
\hline
(-1,1,1)
& $a_{1,\ell_1}^{\alpha_0,\alpha_1} c_{3,\ell_2}^{2\ell_1+  |\balpha^1|+1,\alpha_2} \tau_{9,\bell}^{\balpha}$  
&(1,1,1)
& $a_{3,\ell_1}^{\alpha_0,\alpha_1} g_{3,\ell_2}^{2\ell_1+  |\balpha^1|-1,\alpha_2} \tau_{9,\bell}^{\balpha}$\\
\hline
\hline 
{{$(p,q,r)$}} & $\mathscr{C}_{p,q,r}(\bell,\balpha)$ & {{$(q,r)$}}& $\mathscr{C}_{q,r} (\bell,\balpha)$\\ 
\hline
(0,-1,-1)
& $-(1+a_{2,\ell_1}^{\alpha_0,\alpha_1}) a_{1,\ell_2}^{2\ell_1+|\balpha^1|+1,\alpha_2} \tau_{1,\bell}^{\balpha} /2$
 &(-1,-1)
 & $a_{1,\ell_2}^{2\ell_1+|\balpha^1|+1,\alpha_2} \tau_{1,\bell}^{\balpha}$\\
\hline
(0,-1,0)
& $-(1+a_{2,\ell_1}^{\alpha_0,\alpha_1}) a_{1,\ell_2}^{2\ell_1+|\balpha^1|+1,\alpha_2} \tau_{2,\bell}^{\balpha}/2$
 &(-1,0)
 & $a_{1,\ell_2}^{2\ell_1+|\balpha^1|+1,\alpha_2} \tau_{2,\bell}^{\balpha}$\\
 \hline
 (0,-1,1)
 & $-(1+a_{2,\ell_1}^{\alpha_0,\alpha_1}) a_{1,\ell_2}^{2\ell_1+|\balpha^1|+1,\alpha_2} \tau_{3,\bell}^{\balpha}/2$
 &(-1,1)
 & $a_{1,\ell_2}^{2\ell_1+|\balpha^1|+1,\alpha_2} \tau_{3,\bell}^{\balpha}$ \\
 \hline
 (0,0,-1)
 &  $(1+a_{2,\ell_1}^{\alpha_0,\alpha_1}) (1-a_{2,\ell_2}^{2\ell_1+|\balpha^1|+1,\alpha_2}) \tau_{4,\bell}^{\balpha}/4$
  &(0,-1)
  & $(1+a_{2,\ell_2}^{2\ell_1+|\balpha^1|+1,\alpha_2}) \tau_{4,\bell}^{\balpha}/2$\\
 \hline
(0,0,0)
&$(1+a_{2,\ell_1}^{\alpha_0,\alpha_1}) (1-a_{2,\ell_2}^{2\ell_1+|\balpha^1|+1,\alpha_2}) \tau_{5,\bell}^{\balpha}/4$
 &(0,0)
 & $(1+a_{2,\ell_2}^{2\ell_1+|\balpha^1|+1,\alpha_2}) \tau_{5,\bell}^{\balpha}/2$\\
 \hline
 (0,0,1)
 & $(1+a_{2,\ell_1}^{\alpha_0,\alpha_1}) (1-a_{2,\ell_2}^{2\ell_1+|\balpha^1|+1,\alpha_2}) \tau_{6,\bell}^{\balpha}/4$ 
  &(0,1)
  & $-(1+a_{2,\ell_2}^{2\ell_1+|\balpha^1|+1,\alpha_2})\tau_{6,\bell}^{\balpha}/2$\\
 \hline
 (0,1,-1)
 & $-(1+a_{2,\ell_1}^{\alpha_0,\alpha_1}) a_{3,\ell_2}^{2\ell_1+|\balpha^1|+1,\alpha_2} \tau_{7,\bell}^{\balpha}/2$
  &(1,-1)
  & $a_{3,\ell_2}^{2\ell_1+|\balpha^1|+1,\alpha_2} \tau_{7,\bell}^{\balpha}$\\
 \hline
(0,1,0)
 & $-(1+a_{2,\ell_1}^{\alpha_0,\alpha_1}) a_{3,\ell_2}^{2\ell_1+|\balpha^1|+1,\alpha_2} \tau_{8,\bell}^{\balpha}/2$
  & (1,0)
  &$a_{3,\ell_2}^{2\ell_1+|\balpha^1|+1,\alpha_2} \tau_{8,\bell}^{\balpha}$ \\
 \hline
 (0,1,1)
 & $-(1+a_{2,\ell_1}^{\alpha_0,\alpha_1}) a_{3,\ell_2}^{2\ell_1+|\balpha^1|+1,\alpha_2} \tau_{9,\bell}^{\balpha}/2$
  &(1,1)
  & $a_{3,\ell_2}^{2\ell_1+|\balpha^1|+1,\alpha_2} \tau_{9,\bell}^{\balpha}$\\
 \hline 
\end{tabular}}
\end{table}
\vspace{-2em}
\begin{equation*}
\left[ \mathscr{C}_{-1}(\bell,\balpha), \mathscr{C}_{0}(\bell,\balpha), \mathscr{C}_{1}(\bell,\balpha) \right] = \bigg[
\frac{a_{1,\ell_3}^{2|\bell^2|+|\balpha^2|+2,\alpha_3}}{2}, 
\frac{1+a_{2,\ell_3}^{2|\bell^2|+|\balpha^2|+2,\alpha_3}}{2},
\frac{a_{3,\ell_3}^{2|\bell^2|+|\balpha^2|+2,\alpha_3}}{2}
\bigg].
\end{equation*}

\begin{proof} We shall take the proof of \eqref{recurrence_x2} as an example to explain the derivations of these coefficients.
From \eqref{duffy_3d}, one obtains 
$$\hx_2 = \frac{1+\eta}{2} \frac{1-\zeta}{2}.$$
It then follows from \eqref{three_1d}, \eqref{LemEQ2(3)} and \eqref{LemEQ3(3)} that
{\small
\begin{equation*}
\begin{aligned}
&\hx_2 \JJ_{\bell}^{\balpha} = J_{\ell_1}^{\alpha_0,\alpha_1}(\xi) \left(\frac{1-\eta}{2} \right)^{\ell_1} \frac{1+\eta}{2} J_{\ell_2}^{2\ell_1+  |\balpha^1|+1,\alpha_2} (\eta)\left( \frac{1-\zeta}{2}\right)^{|\bell^2|+1}J_{\ell_3}^{2 |\bell^2| +|\balpha^2|+2,\alpha_3}(\zeta)\\
&\quad =J_{\ell_1}^{\alpha_0,\alpha_1}(\xi) \left(\frac{1-\eta}{2} \right)^{\ell_1}
\bigg[
\frac{a_{1,\ell_2}^{2\ell_1+|\balpha^1|+1,\alpha_2}}{2} J_{\ell_2+1}^{2\ell_1+|\balpha^1|+1,\alpha_2} (\eta) \left( \frac{1-\zeta}{2}\right)^{|\bell^2|+1}\\
&\qquad\quad\times \bigg(
c_{1,\ell_3}^{2|\bell^2|+|\balpha^2|+2,\alpha_3} J_{\ell_3}^{2|\bell^2|+|\balpha^2|+4,\alpha_3}(\zeta)+
c_{2,\ell_3}^{2|\bell^2|+|\balpha^2|+2,\alpha_3} J_{\ell_3-1}^{2|\bell^2|+|\balpha^2|+4,\alpha_3}(\zeta)\\
&\qquad\qquad 
+
c_{3,\ell_3}^{2|\bell^2|+|\balpha^2|+2,\alpha_3} J_{\ell_3-2}^{2|\bell^2|+|\balpha^2|+4,\alpha_3}(\zeta)
\bigg)\\
&\qquad\qquad\qquad+
\frac{1+a_{2,\ell_2}^{2\ell_1+|\balpha^1|+1,\alpha_2}}{2} J_{\ell_2}^{2\ell_1+|\balpha^1|+1,\alpha_2} (\eta)\left( \frac{1-\zeta}{2}\right)^{|\bell^2|} \\
&\qquad\quad\times \bigg(
-\frac{a_{1,\ell_3}^{2|\bell^2|+|\balpha^2| +2,\alpha_3}}{2} J_{\ell_3+1}^{2|\bell^2|+|\balpha^2|+2,\alpha_3}(\zeta)+
\frac{1-a_{2,\ell_3}^{2|\bell^2|+|\balpha^2| +2,\alpha_3}}{2} J_{\ell_3}^{2|\bell^2|+|\balpha^2|+2,\alpha_3}(\zeta)\\
&\qquad\qquad 
-
\frac{a_{3,\ell_3}^{2|\bell^2|+|\balpha^2| +2,\alpha_3}}{2} J_{\ell_3-1}^{2|\bell^2|+|\balpha^2|+2,\alpha_3}(\zeta)
\bigg)\\
&\qquad\qquad\qquad+
\frac{a_{3,\ell_2}^{2\ell_1+|\balpha^1|+1,\alpha_2}}{2} J_{\ell_2-1}^{2\ell_1+|\balpha^1|+1,\alpha_2} (\eta)\left( \frac{1-\zeta}{2}\right)^{|\bell^2|-1} \\
&\qquad\quad\times \left(
g_{1,\ell_3}^{2|\bell^2|+|\balpha^2|,\alpha_3} J_{\ell_3+2}^{2|\bell^2|+|\balpha^2|,\alpha_3}(\zeta)+
g_{2,\ell_3}^{2|\bell^2|+|\balpha^2|,\alpha_3} J_{\ell_3+1}^{2|\bell^2|+|\balpha^2|,\alpha_3}(\zeta)+
g_{3,\ell_3}^{2|\bell^2|+|\balpha^2|,\alpha_3} J_{\ell_3}^{2|\bell^2|+|\balpha^2|,\alpha_3}(\zeta) 
\right)
\bigg].
\end{aligned}
\end{equation*}
}
The proof is completed.
\end{proof}

\section{Exact eigenvalues of homogeneous Dirichlet Laplacian on $\TT_F$}\label{exact_TF}
We first claim that the generalized sine functions are eigenfunctions of the Dirichlet Laplacian on $\TT_F$. 
Actually, motivated by the study of \cite{LiXu2008},
we introduce homogeneous coordinates $\bss\in \RR_H^4$ with
\begin{equation}
 \RR_H^4:= \left\{\bss=(s_0,s_1,s_2,s_3)\in \RR^4: |\bss| = 0 \right\},\quad | \bss | = \sum\limits_{j=0}^3 s_j.
\end{equation}
For convenience, we adopt the convention of using bold letters, such as $\bss$ and $\bk$, to denote points represented in homogeneous coordinates.
The transformation between $\bs{x}\in \RR^3$ and $\bss\in \RR^4_H$ is then defined by \cite[(3.1)]{LiXu2008},
\begin{equation}\label{trans_homo}
\begin{cases}
x_1 =  s_2+s_3,\\
x_2 = s_3+s_1,\\
x_3 = s_1 + s_2,
\end{cases}
\end{equation}
and $s_0=-s_1-s_2-s_3.$

We further define the function on $\Omega_H=\left\{ \bss \in \RR^4_H: -1\le s_i-s_j\le 1, 0\le i,j\le 3\right\}$ that
\begin{equation}
\begin{aligned}
&\phi_{\bk}(\bss):=e^{\frac{\pi \ii}{2} \bk\cdot \bss},\quad \bk \in \Lambda_0,\\ 
&\Lambda_0:=\left\{ \bk\in \RR^4_H \cap \ZZ^4:
k_0\equiv k_1\equiv k_2 \equiv k_3 \,\,({\rm{mod}}\, 4),
k_0<k_1<k_2<k_3 \right\}.
\end{aligned}
\end{equation}
Here $\ii$ is the imaginary number satisfying $\ii^2=-1.$
Let $\mathcal{G}$ be the permutation group of four elements. For $\bk\in \RR^4_{H}$ and $\sigma\in \mathcal{G},$ 
the permutation of the elements in $\bk$ by $\sigma$  is denoted by $\bk \sigma.$ The generalized sine functions are then defined as \cite[Definition 4.2]{LiXu2008}
\begin{equation}\label{g_sine}
{\rm TS}_{\bk}(\bss):= \frac{1}{24} \sum\limits_{\sigma\in \mathcal{G}} (-1)^{|\sigma|} \phi_{\bk \sigma}(\bss),\quad \bk\in \Lambda_0,
\end{equation}
where $|\sigma|$ represents the number of inversions in $\sigma.$
Thus, we arrive at the following lemma.

\begin{lemma}\label{fundamental_eig}
The generalized sine functions ${\rm TS}_{\mathbf{k}}(\mathbf{s}), \bk\in\Lambda_0$ are the eigenfunctions of the Laplacian on $\TT_F$ subject to the homogeneous Dirichlet boundary condition: 
\begin{equation}
\begin{cases}
-\Delta {\rm TS}_{\bk}(\bss) = \mu_{\bk} {\rm TS}_{\bk}(\bss),\quad &{\rm{in}}\,\TT_F,\\
{\rm TS}_{\bk}(\bss)=0,\quad &{\rm{on}}\,\partial\TT_F,
\end{cases}
\end{equation}
where 
\begin{equation}
\mu_{\bk}=\frac{\pi^2|\bk|^2}{4},\quad |\bk|^2=\sum\limits_{j=0}^3 k_j^2.
\end{equation}
\end{lemma}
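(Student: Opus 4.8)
The plan is to verify directly that each ${\rm TS}_{\bk}$, $\bk\in\Lambda_0$, is an eigenfunction with the claimed eigenvalue, and then check the boundary condition. First I would observe that in homogeneous coordinates the Laplacian pulls back to a constant-coefficient operator: from the transformation \eqref{trans_homo} one computes $\nabla\hx_j$ and hence $\Delta = \sum_{j}(\nabla\hx_j\cdot\nabla\hx_j)\partial_{\hx_j}^2 + \cdots$, which in the $\bss$-variables (restricted to the hyperplane $|\bss|=0$) becomes a fixed second-order operator with constant coefficients. Applying it to the plane wave $\phi_{\bk}(\bss)=e^{\frac{\pi\ii}{2}\bk\cdot\bss}$ produces $-\frac{\pi^2}{4}Q(\bk)\phi_{\bk}(\bss)$ for some quadratic form $Q$; a short computation (using $|\bk|=0$ on $\RR^4_H$, so that the ``missing'' cross terms reorganize) identifies $Q(\bk)=\sum_{j=0}^3 k_j^2=|\bk|^2$. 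Crucially, $Q$ is invariant under permutations of the coordinates of $\bk$, so $-\Delta\phi_{\bk\sigma} = \mu_{\bk}\phi_{\bk\sigma}$ for every $\sigma\in\mathcal G$ with the \emph{same} eigenvalue $\mu_{\bk}=\pi^2|\bk|^2/4$. Since ${\rm TS}_{\bk}$ in \eqref{g_sine} is a finite linear combination of the $\phi_{\bk\sigma}$, linearity of $\Delta$ gives $-\Delta{\rm TS}_{\bk}=\mu_{\bk}{\rm TS}_{\bk}$ immediately.

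Second, I would verify the homogeneous Dirichlet condition on $\partial\TT_F$. The faces of $\TT_F$ correspond, under \eqref{trans_homo}, to the walls $s_i-s_j=\pm1$ of $\Omega_H$ (equivalently, to the reflecting hyperplanes of the affine Weyl group acting on $\RR^4_H$). On such a wall the alternating sum over $\mathcal G$ in \eqref{g_sine} can be paired up: the transposition $\tau=(i\,j)$ sends $\phi_{\bk\sigma}(\bss)$ on the wall to $(-1)$ times $\phi_{\bk\sigma\tau}(\bss)$ there — because swapping the roles of $s_i$ and $s_j$ in the exponent $\bk\sigma\cdot\bss$ changes the phase by a multiple of $2\pi$ precisely when $s_i-s_j\in\ZZ$ and $(\bk\sigma)_i\equiv(\bk\sigma)_j\pmod 4$, which holds by the congruence defining $\Lambda_0$. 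Since $(-1)^{|\sigma\tau|}=-(-1)^{|\sigma|}$, the terms indexed by $\sigma$ and $\sigma\tau$ cancel in pairs, so ${\rm TS}_{\bk}$ vanishes on every face of $\TT_F$.

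The main obstacle I anticipate is the careful bookkeeping in the boundary-vanishing step: one must check that on each of the four faces the correct phase relation $\phi_{\bk\sigma}(\bss)=-\phi_{\bk\sigma\tau}(\bss)$ holds for the relevant transposition $\tau$, using both $s_i-s_j=\pm1$ and the $\mathrm{mod}\ 4$ congruence $k_0\equiv k_1\equiv k_2\equiv k_3$; the factor $e^{\frac{\pi\ii}{2}((\bk\sigma)_i-(\bk\sigma)_j)(s_j-s_i)}$ must equal $-1$, which forces exactly the $\mathrm{mod}\ 4$ condition and explains why $\Lambda_0$ is defined as it is. A secondary (but routine) point is confirming the quadratic form identity $Q(\bk)=|\bk|^2$ on $\RR^4_H$; this is where the geometric formulas for $\nabla\hx_j$ on $\TT_F$, together with the explicit vertices $P^F_0,\dots,P^F_3$, enter, and it is the only place specific to the \emph{fundamental} tetrahedron. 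I would also remark that $\Lambda_0$ is chosen (with $k_0<k_1<k_2<k_3$) so that the ${\rm TS}_{\bk}$ are nonzero and distinct, giving a genuine family of eigenfunctions rather than redundancies; establishing completeness of this family (that these are \emph{all} the eigenvalues) is not claimed in the lemma and can be cited from \cite{LiXu2008}.
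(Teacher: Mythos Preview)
Your strategy is exactly the paper's: express $\Delta$ in homogeneous coordinates, verify that each plane wave $\phi_{\bk}$ is an eigenfunction with eigenvalue $\tfrac{\pi^2}{4}|\bk|^2$, and invoke linearity plus permutation invariance of $|\bk|^2$ to pass to ${\rm TS}_{\bk}$. The paper makes the Laplacian step cleaner by deriving the explicit identity
\[
\Delta=\frac{1}{4}\sum_{0\le j<n\le 3}(\partial_{s_j}-\partial_{s_n})^2,
\]
from which $-\Delta\phi_{\bk}=\tfrac{\pi^2}{16}\sum_{j<n}(k_j-k_n)^2\phi_{\bk}=\tfrac{\pi^2}{4}|\bk|^2\phi_{\bk}$ follows in one line using $\sum_j k_j=0$; this is the quadratic-form identity you were anticipating.

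Two small corrections to your boundary sketch. First, the faces of $\TT_F$ are the walls of the fundamental alcove of the affine Weyl group, so three of them are \emph{finite} walls $s_i=s_j$ and only one is affine ($s_i-s_j=1$); they are not all of the form $s_i-s_j=\pm1$. On a finite wall $s_i=s_j$ one has $\phi_{\bk\sigma}=\phi_{\bk\sigma\tau}$ immediately, with no $\bmod\ 4$ condition needed. Second, your sign is off: for the pair $(\sigma,\sigma\tau)$ to cancel in the alternating sum you need $\phi_{\bk\sigma}(\bss)=+\phi_{\bk\sigma\tau}(\bss)$ on the wall (because $(-1)^{|\sigma\tau|}=-(-1)^{|\sigma|}$ already supplies the minus sign), i.e.\ the phase factor $e^{\frac{\pi\ii}{2}((\bk\sigma)_i-(\bk\sigma)_j)(s_i-s_j)}$ must equal $+1$, not $-1$. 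On the affine wall $s_i-s_j=1$ this is exactly the requirement $(\bk\sigma)_i\equiv(\bk\sigma)_j\pmod 4$, which is guaranteed by $\Lambda_0$. The paper itself dispatches the boundary step in a single sentence (``Due to the symmetry of ${\rm TS}_{\bk}(\bss)$, it vanishes on $\partial\TT_F$''), so your more detailed treatment, once these two points are fixed, is a genuine improvement in exposition.
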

\begin{proof}
Due to the symmetry of ${\rm TS}_{\bk}(\bss)$, it vanishes on $\partial\TT_F.$
From the transformation \eqref{trans_homo}, we have
\begin{equation*}
\begin{aligned}
&\partial_{s_1}-\partial_{s_0} = \partial_{x_2}+\partial_{x_3},\quad
\partial_{s_2}-\partial_{s_0} = \partial_{x_3}+\partial_{x_1},\quad
\partial_{s_3}-\partial_{s_0} = \partial_{x_1}+\partial_{x_2},\\
&\partial_{s_1}-\partial_{s_2} = \partial_{x_2}-\partial_{x_1},\quad
\partial_{s_2}-\partial_{s_3} = \partial_{x_3}-\partial_{x_2},\quad
\partial_{s_3}-\partial_{s_1} = \partial_{x_1}-\partial_{x_3}.
\end{aligned}
\end{equation*}
One easily obtains an equivalent expression of the Laplacian operator in homogenous coordinates that
\begin{equation}\label{laplace_homo}
\Delta = \frac{1}{4} \sum\limits_{1\le i< m\le 3} \left( 
\left(\partial_{x_i}+\partial_{x_m}\right)^2 +
\left(\partial_{x_i}-\partial_{x_m}\right)^2
 \right)=
 \frac{1}{4} \sum\limits_{0\le j< n\le 3}  \left( \partial_{s_j} - \partial_{s_n}\right)^2.
\end{equation}
Applying \eqref{laplace_homo} on $\phi_{\bk}$ yields
\begin{equation*}
\begin{aligned}
-\Delta \phi_{\bk}(\bss) &= -\frac{1}{4} \sum\limits_{0\le j< n\le 3}  \left( \partial_{s_j} - \partial_{s_n}\right)^2 \phi_{\bk}(\bss)= \frac{\pi^2}{16} \sum\limits_{0\le j< n\le 3}  \left( k_j - k_n\right)^2 \phi_{\bk}(\bss)\\
&= \frac{\pi^2}{32} \sum\limits_{0\le j, n\le 3 \atop j\neq n}  \left( k_j - k_n\right)^2 \phi_{\bk}(\bss)\\
&= \frac{\pi^2}{32} \left( 4\sum\limits_{j=0}^3 k_j^2 + 4\sum\limits_{n=0}^3 k_n^2 -2 \left( \sum\limits_{j=0}^3 k_j \right) \left( \sum\limits_{n=0}^3 k_n \right)\right)  \phi_{\bk}(\bss)\\
&= \frac{\pi^2}{4} \sum\limits_{j=0}^3 k_j^2  \phi_{\bk}(\bss)= \frac{\pi^2}{4} 
|\bk|^2 \phi_{\bk}(\bss).
\end{aligned}
\end{equation*}
Therefore, by the definition of generalized sine functions \eqref{g_sine}, it holds that
\begin{equation*}
\begin{aligned}
-\Delta {\rm TS}_{\bk}(\bss) &=  \frac{1}{24} \sum\limits_{\sigma\in \mathcal{G}} (-1)^{|\sigma|+1}  \Delta \phi_{\bk \sigma}(\bss)= \frac{\pi^2}{4} \frac{1}{24}  \sum\limits_{\sigma\in \mathcal{G}} (-1)^{|\sigma|}  |\bk\sigma|^2 \phi_{\bk \sigma}(\bss)\\
&=\frac{\pi^2  |\bk|^2  }{4} \frac{1}{24}  \sum\limits_{\sigma\in \mathcal{G}} (-1)^{|\sigma|} \phi_{\bk \sigma}(\bss)= \frac{\pi^2  |\bk|^2  }{4}  {\rm TS}_{\bk}(\bss).
\end{aligned}
\end{equation*}
This completes the proof.
\end{proof}

\end{appendix}

\section*{Acknowledgements}
The research of the second author is supported in part by the National Natural Science Foundation of China grants  NSFC 11871455 and NSFC 11971016. The research of the third author is supported in part by the National Natural Science Foundation of China  grants NSFC 11871092 and NSAF U1930402.

\bibliographystyle{plain}%

\bibliography{reference_koorn}

\begin{thebibliography}{10}

\bibitem{Adjerid2001}
S.~Adjerid, M.~Aiffa, and J.E. Flaherty.
\newblock Hierarchical finite element bases for triangular and tetrahedral
  elements.
\newblock {\em Computer methods in applied mechanics and engineering},
  190:2925--2941, 2001.

\bibitem{AAR1999}
G.E. Andrews, R.~Askey, and R.~Roy.
\newblock {\em Special Functions}.
\newblock Cambridge, 1999.

\bibitem{Baogaps2020}
W.Z. Bao, L.Z. Chen, X.Y. Jiang, and Y.~Ma.
\newblock A {Jacobi} spectral method for computing eigenvalue gaps and their
  distribution statistics of the fractional {Schr\"{o}dinger} operator.
\newblock {\em Journal of Computational Physics}, 421:109733, 2020.

\bibitem{intlegendre2007}
S.~Beuchler and V.~Pillwein.
\newblock Sparse shape functions for tetrahedral $p$-{FEM} using integrated
  {Jacobi} polynomials.
\newblock {\em Computing}, 80:345--375, 2007.

\bibitem{intlegendre2008}
S.~Beuchler and V.~Pillwein.
\newblock Completions to sparse shape functions for triangular and tetrahedral
  $p$-{FEM}.
\newblock In {\em Domain Decomposition Methods in Science and Engineering
  XVII}, pages 435--442, Berlin, Heidelberg, 2008. Springer Berlin Heidelberg.

\bibitem{intlegendre2006}
S.~Beuchler and J.~Sch\"{o}berl.
\newblock New shape functions for triangular $p$-{FEM} using integrated
  {Jacobi} polynomials.
\newblock {\em Numerische Mathematik}, 103:339--366, 2006.

\bibitem{CQHZ_spectral_single}
C.~Canuto, A.~Quarteroni, M.~Y. Hussaini, and T.~A. Zang.
\newblock {\em Spectral Methods, Fundamentals in Single Domains}.
\newblock Springer-Verlag Berlin Heidelberg, 2006.

\bibitem{Carnevali1993}
P.~Carnevali, R.B. Morris, Y.~Tsuji, and G.~Taylor.
\newblock New basis functions and computational procedures for $p$-version
  finite element analysis.
\newblock {\em International Journal for numerical methods in engineering},
  36:3759--3779, 1993.

\bibitem{clenshaw1955}
C.W. Clenshaw.
\newblock A note on the summation of {Chebyshev} series.
\newblock {\em Math. Tables Aids Comput.}, 9:118--120, 1955.

\bibitem{Dubiner1991}
M.~Dubiner.
\newblock Spectral methods on triangles and other domains.
\newblock {\em Journal of Scientific Computing}, 6(4):345--390, 1991.

\bibitem{DX2001}
C.F. Dunkl and Y.~Xu.
\newblock {\em Orthogonal Polynomials of Several Variables}.
\newblock Cambridge University Press, 2001.

\bibitem{Evans1998}
L.C. Evans.
\newblock {\em Partial Differential Equations, Second Edition}.
\newblock in: Graduate Studies in Mathematics, vol. 19, AMS, Rhode Island,
  1998.

\bibitem{GSW2001}
B.Y. Guo, J.~Shen, and L.L. Wang.
\newblock Generalized {Jacobi} polynomials/functions and their applications.
\newblock {\em Applied Numerical Mathematics}, 59(5):1011--1028, 2001.

\bibitem{GSW2006}
B.Y. Guo, J.~Shen, and L.L. Wang.
\newblock Optimal spectral-{Galerkin} methods using {Generalized} {Jacobi}
  polynomials.
\newblock {\em Journal of Scientific Computing}, 27:305--322, 2006.

\bibitem{WeylV2016}
V.~Ivrii.
\newblock 100 years of {Weyl's} law.
\newblock {\em Bull. Math. Sci.}, 6:379--452, 2016.

\bibitem{JMRR1999}
D.~Jakobson, S.~Miller, I.~Rivin, and Z.~Rudnick.
\newblock Level spacings for regular graphs.
\newblock {\em IMA Math. Appl.}, 109:317--329, 1999.

\bibitem{Karniadakis2005}
G.E. Karniadakis and S.J. Sherwin.
\newblock {\em Spectral/$hp$ Element Methods for Computational Fluid Dynamics}.
\newblock Numerical Mathematics and Scientific Computation. Oxford University
  Press, New York, second edition, 2005.

\bibitem{KOORNWINDER1975435}
T.~Koornwinder.
\newblock Two-variable analogues of the classical orthogonal polynomials.
\newblock In R.~A. Askey, editor, {\em Theory and Application of Special
  Functions}, pages 435--495. Academic Press, 1975.

\bibitem{LiShen2009}
H.Y. Li and J.~Shen.
\newblock Optimal error estimates in {Jacobi}-weighted {Sobolev} spaces for
  polynomial approximations on the triangle.
\newblock {\em Mathematics of Computation}, 79(271):1621--1646, 2009.

\bibitem{LiWang_rational}
H.Y. Li and L.L. Wang.
\newblock A spectral method on tetrahedra using rational basis functions.
\newblock {\em International Journal of Numerical Analysis and Modeling},
  7(2):330--355, 2010.

\bibitem{LiXu2008}
H.Y. Li and Y.~Xu.
\newblock Discrete {Fourier} analysis on a dodecahedron and a tetrahedron.
\newblock {\em Mathematics of Computation}, 78(266):999--1029, 2008.

\bibitem{2017elastody}
D.A. May and A.A. Gabriel.
\newblock A spectral element discretization on unstructured
  triangle/tetrahedral meshes for elastodynamics.
\newblock {\em EGU General Assembly Conference Abstracts}, 19:13218, 2017.

\bibitem{McCartin2003}
B.~J. McCartin.
\newblock Eigenstructure of the equilateral triangle, {P}art i: {T}he
  {D}irichlet problem.
\newblock {\em SIAM Review}, 45(2):267--287, 2003.

\bibitem{Townsend2019}
S.~Olver, A.~Townsend, and G.~Vasil.
\newblock A sparse spectral method on triangles.
\newblock {\em SIAM Journal on Scientific Computing}, 41(6):A3728--A3756, 2019.

\bibitem{peano1979}
A.~Peano.
\newblock Adaptive approximations in finite element structural analysis.
\newblock {\em Computers \& Structures}, 10:333--342, 1979.

\bibitem{prager1998}
M.~Pr\'{a}ger.
\newblock Eigenvalues and eigenfunctions of the {L}aplace operator on an
  equilateral triangle.
\newblock {\em Applications of Mathematics}, 43:311--320, 1998.

\bibitem{ShanLi2015}
W.K. Shan and H.Y. Li.
\newblock Numerical comparison research of {L}aplace eigenvalue on arbitrary
  triangle using spectral method (in {C}hinese).
\newblock {\em Journal on Numerical Methods and Computer Applications},
  36:113--131, 2015.

\bibitem{ShanLi2017}
W.K. Shan and H.Y. Li.
\newblock The triangular spectral element method for {Stokes} eigenvalues.
\newblock {\em Mathematics of Computation}, 86(308):2579--2611, 2017.

\bibitem{STW2011}
J.~Shen, T.~Tang, and L.L. Wang.
\newblock {\em Spectral Methods, Algorithms, Analysis and Applications}.
\newblock Springer-Verlag Berlin Heidelberg, 2011.

\bibitem{Sherwin1996}
S.J. Sherwin and G.E. Karniadakis.
\newblock Tetrahedral $hp$ finite elements: algorithms and flow simulation.
\newblock {\em Journal of Computational Physics}, 124:14--45, 1996.

\bibitem{KS1995}
S.J. Sherwin and G.M. Karniadakis.
\newblock A new triangular and tetrahedral basis for high-order ($hp$) finite
  element methods.
\newblock {\em International Journal for Numerical Methods in Engineering},
  38:3775--3802, 1995.

\bibitem{szabo1991}
B.~Szab\'{o} and I.~Babu\v{s}ka.
\newblock {\em Finite Element Analysis}.
\newblock John Wiley \& Sons, Inc., 1991.

\bibitem{Weyl1913}
H.~Weyl.
\newblock \"{U}ber die randwertaufgabe der strahlungstheorie und asymptotische
  spektralgeometrie.
\newblock {\em J. Reine Angew. Math}, 143:177--202, 1913.

\bibitem{zhangeig2015}
Z.M. Zhang.
\newblock How many numerical eigenvalues can we trust?
\newblock {\em Journal of Scientific Computing}, 65:455--466, 2015.

\bibitem{resistivity2019}
J.~Zhu, C.C. Yin, Y.S. Liu, L.~Liu, Z.L. Yang, and C.K. Qiu.
\newblock {3D} dc resistivity modelling based on spectral element method with
  unstructured tetrahedral grids.
\newblock {\em Geophysical Journal International}, 220:1748--1761, 2019.

\end{thebibliography}

\end{document}